\newcommand{\email}[1]{\protect\href{mailto:#1}{#1}}
\newtheorem{lemma}{Lemma}
\newtheorem{thm}{Theorem}
\newtheorem{prop}{Proposition}
\newcounter{spb}
\DeclareMathOperator{\diag}{Diag}
\DeclareMathOperator{\bldg}{BlkDiag}
\DeclareMathOperator{\sign}{sgn}
\DeclareMathOperator{\dist}{dist}
\DeclareMathOperator{\Proj}{Proj}
\DeclareMathOperator{\dom}{dom}
\newcommand{\be}{\begin{equation}}
\newcommand{\ee}{\end{equation}}
\newcommand{\RNum}[1]{\uppercase\expandafter{\romannumeral #1\relax}}
\newcommand{\rNum}[1]{\lowercase\expandafter{\romannumeral #1\relax}}
\newcommand{\argmin}{\arg\min}
\def\bA{\bm{A}}
\def\tbA{\widetilde{\bm{A}}}
\def\bB{\bm{B}}
\def\bC{\bm{C}}
\def\bE{\bm{E}}
\def\bI{\bm{I}}
\def\bP{\bm{P}}
\def\bQ{\bm{Q}}
\def\tbQ{\widetilde{\bm{Q}}}
\def\bS{\bm{S}}
\def\bU{\bm{U}}
\def\bV{\bm{V}}
\def\bW{\bm{W}}
\def\bX{\bm{X}}
\def\bY{\bm{Y}}
\def\bZ{\bm{Z}}
\def\bSigma{\bm{\Sigma}} 
\def\Labd{\bm{\Lambda}}
\def\b0{\bm{0}}
\def\bq{\bm{q}}
\def\tbq{\tilde{\bq}}
\def\bv{\bm{v}}
\def\bx{\bm{x}}
\def\by{\bm{y}}
\def\bz{\bm{z}}
\def\mB{\mathcal{B}}
\def\mI{\mathcal{I}}
\def\mN{\mathcal{N}}
\def\mQ{\mathcal{Q}}
\def\mO{\mathcal{O}}
\def\mS{\mathcal{S}}
\def\mT{\mathcal{T}}
\def\R{\mathbb{R}}
\def\S{\mathbb{S}}
\begin{document}
\title{Linear Convergence of a Proximal Alternating Minimization Method with Extrapolation for $\ell_1$-Norm \\ Principal Component Analysis\thanks{A preliminary version of this work has appeared in the Proceedings of the 2019 IEEE International Conference on Acoustics, Speech, and Signal Processing (ICASSP 2019)~\cite{wang2019globally}.}} 
\author{ 
Peng Wang\thanks{Department of Systems Engineering and Engineering Management, The Chinese University of
Hong Kong, Shatin N.T., Hong Kong SAR, China. (\email{wangpeng@se.cuhk.edu.hk}).}
\and Huikang Liu\thanks{Imperial College Business School, Imperial College London.(\email{huikang.liu@imperial.ac.uk}).}
\and Anthony Man-Cho So\thanks{Department of Systems Engineering and Engineering Management, The Chinese University of Hong Kong, Shatin N.T., Hong Kong SAR, China. (\email{manchoso@se.cuhk.edu.hk}, \url{http://www.se.cuhk.edu.hk/\~manchoso/}).}}
\date{\today}
\maketitle

\begin{abstract}
A popular robust alternative of the classic principal component analysis (PCA) is the $\ell_1$-norm PCA (L1-PCA), which aims to find a subspace that captures the most variation in a dataset as measured by the $\ell_1$-norm. L1-PCA has shown great promise in alleviating the effect of outliers in data analytic applications. However, it gives rise to a challenging non-smooth non-convex optimization problem, for which existing algorithms are either not scalable or lack strong theoretical guarantees on their convergence behavior. In this paper, we propose a proximal alternating minimization method with extrapolation (PAMe) for solving a two-block reformulation of the L1-PCA problem. We then show that for both the L1-PCA problem and its two-block reformulation, the Kurdyka-\L ojasiewicz exponent at any of the limiting critical points is $1/2$. This allows us to establish the linear convergence of the sequence of iterates generated by PAMe and to determine the criticality of the limit of the sequence with respect to both the L1-PCA problem and its two-block reformulation. To complement our theoretical development, we show via numerical experiments on both synthetic and real-world datasets that PAMe is competitive with a host of existing methods. Our results not only significantly advance the convergence theory of iterative methods for L1-PCA but also demonstrate the potential of our proposed method in applications.
\end{abstract}


\section{Introduction}\label{sec:intro}
Dimension reduction is a powerful paradigm for facilitating information extraction from large datasets. Among the many existing dimension reduction techniques, perhaps the most classic and widely used one is principal component analysis (PCA), which aims to identify a low-dimensional subspace that captures the most variation in the dataset. Concretely, let $\bX=[\bx_1,\ldots,\bx_n] \in \R^{d\times n}$ be the data matrix, where $n$ and $d$ denote the number of samples and dimension of the data points, respectively. Suppose that the sample mean of the dataset $\{\bx_1,\ldots,\bx_n\}$ is zero. Then, a common formulation for finding the desired low-dimensional subspace is given by
\be \label{eq:l2-pca}
\max_{\bQ \in {\rm St}(d,K)}  \|\bX^T \bQ\|_F,
\ee
where $K$ is the dimension of the subspace with $K \le \min\{n,d\}$ and ${\rm St}(d,K) = \{ \bQ \in \mathbb{R}^{d\times K} : \bQ^T\bQ=\bI_K \}$ is the compact Stiefel manifold with $\bI_K$ being the $K\times K$ identity matrix~\cite{jolliffe2016principal}. Despite its non-convexity, Problem~\eqref{eq:l2-pca}, which we shall refer to as L2-PCA, can be solved efficiently by computing the singular value decomposition (SVD) of the data matrix $\bX$. Moreover, the subspace spanned by the columns of an optimal solution to Problem~\eqref{eq:l2-pca} possesses many nice properties~\cite{jolliffe2002principal}. Nevertheless, it has long been known that L2-PCA is sensitive to corruptions in the dataset (generically referred to as \emph{outliers}); see, e.g.,~\cite{devlin1981robust}. This makes L2-PCA ill-suited for many contemporary applications, as the datasets are often collected by automated devices and can be prone to outliers. Over the years, there has been much effort in developing alternatives to L2-PCA that are robust against outliers; see, e.g.,~\cite{lerman2018overview,markopoulos2018outlier} and the references therein. One approach is to replace the $\ell_2$- (Frobenius) norm in~\eqref{eq:l2-pca} with a suitable measure of dispersion in the dataset called \emph{scale function}. Various scale functions and their statistical properties have been studied in the literature; see, e.g.,~\cite{maronna2006robust} and the references therein. In particular, by taking the $\ell_1$-norm as the scale function, we obtain the following robust alternative to L2-PCA, which we shall refer to as L1-PCA:
\be \label{L1-PCA} 
\max_{\bQ \in {\rm St}(d,K)}  \|\bX^T \bQ\|_1.
\ee
Here, $\|\bA\|_1=\sum_{i,j}|A_{ij}|$ denotes the $\ell_1$-norm of the matrix $\bA$. Besides being of interest in its own right, L1-PCA is also related to other data analytic tools, such as independent component analysis~\cite{martin2016link} and linear discriminant analysis~\cite{martin2019lda}. However, unlike L2-PCA, which can essentially be solved in polynomial time, L1-PCA gives rise to a challenging computational problem. Indeed, it is shown in~\cite{mccoy2011two} that Problem~\eqref{L1-PCA} is NP-hard even when $K=1$. This motivates the development of numerically efficient algorithms for solving the L1-PCA problem. 

Many of the earlier algorithms for L1-PCA, such as~\cite{baccini1996l1,choulakian2006L1,croux2007algorithms}, are heuristic in nature. In particular, there is no guarantee that the outputs of these algorithms satisfy any optimality condition of Problem~\eqref{L1-PCA}. Among the first algorithms for L1-PCA that come with theoretical guarantees are those proposed by Kwak~\cite{kwak2008principal} and Nie et al.~\cite{nie2011robust}, which are based on fixed-point (FP) iterations. The former applies to Problem~\eqref{L1-PCA} with $K=1$, while the latter can handle general $K\ge1$. The per-iteration computational costs of these two algorithms are bounded by $\mO(ndK+dK^2)$, which is cheap in the practically relevant case where $K \ll \min\{n,d\}$. Moreover, it is shown that for both algorithms, the iterates generated have a limit point (i.e., subsequential convergence of the iterates) and every limit point satisfies certain first-order optimality condition of the problem. However, the convergence rates of the two algorithms remain unknown. Around the same time, McCoy and Tropp~\cite{mccoy2011two} studied a semidefinite relaxation (SDR) approach (see~\cite{luo2010semidefinite} for an overview) to solving Problem~\eqref{L1-PCA} when $K=1$. It is shown that with high probability, the solution obtained via this approach will have an objective value that is at least $c\cdot\sqrt{2/\pi}$ times the optimal value for any fixed $c\in(0,1)$. However, standard interior-point method-based implementations of the SDR approach have a computational complexity of roughly $\mO(n^{3.5})$, which renders the approach impractical when the dataset is large. Later, Markopoulos et al.~\cite{markopoulos2014optimal} proposed an exact algorithm for solving Problem~\eqref{L1-PCA} that runs in $\mO(n^{dK-K+1})$ time. Although this algorithm is impractical due to its high computational cost, it shows that Problem~\eqref{L1-PCA} is actually polynomial-time solvable when \emph{both} $d$ and $K$ are fixed. Moreover, it can be used to benchmark the solution quality of different L1-PCA algorithms. In a follow-up work, Markopoulos et al.~\cite{markopoulos2017efficient} developed an algorithm based on bit-flipping (BF) iterations for tackling Problem~\eqref{L1-PCA}. On one hand, the computational cost of each BF iteration is $\mO(ndK+nK^3))$, which is inferior to that of the FP iteration developed in~\cite{nie2011robust} when $n \ge d$. On the other hand, the algorithm based on BF iterations is guaranteed to converge in a finite number of steps, while that based on FP iterations is not known to possess such a property. Nevertheless, the number of BF iterations needed can be exponential in $n$ and $K$ in the worst case. Moreover, it is not clear whether the solution obtained from the BF iterations satisfies any optimality condition of Problem~\eqref{L1-PCA}. Recently, Kim and Klabjan~\cite{kim2019simple} revisited Problem~\eqref{L1-PCA} under the setting where $K=1$ and proposed an algorithm similar to those in~\cite{kwak2008principal,nie2011robust} for tackling it. It is shown that the sequence of iterates generated by the algorithm will converge in a finite number of steps. This qualitatively improves upon the subsequential convergence results in~\cite{kwak2008principal,nie2011robust}. Moreover, by pretending that the objective function of~\eqref{L1-PCA} is smooth, it is claimed that the limit of the sequence is a local maximum of the problem. However, a rigorous proof of this claim is still missing.

\subsection{Our Contributions} \label{subsec:contrib}
In view of the above discussion, our goal in this paper is to develop an iterative method for solving Problem~\eqref{L1-PCA} that is numerically efficient and has strong theoretical guarantees on its convergence behavior. To begin, observe that since $|x| = \max\{x,-x\}$ for any $x\in \R$, we can reformulate Problem \eqref{L1-PCA} as
\be \label{L1-PCA-Re} 
\min_{\bP\in \mB(n,K),\,\bQ \in {\rm St}(d,K)} -\langle \bP, \bX^T \bQ \rangle,
\ee
where $\langle \bA,\bB \rangle = {\rm tr}(\bm{A}^T\bB)$ denotes the Euclidean inner product of two matrices $\bA,\bB$ of the same dimensions and $\mB(n,K)=\{\bP \in \R^{n\times K}: P_{ij}\in\{\pm 1\};\ i=1,\ldots,n;\ j=1,\ldots,K\}$ is the set of $n\times K$ $\pm1$ matrices. 
Noting that Problem~\eqref{L1-PCA-Re} has two separate blocks of variables $\bP$ and $\bQ$, we can tackle it using the \emph{proximal alternating minimization} (PAM) method~\cite{auslender1992asymptotic,attouch2010proximal}. To achieve further speed-up, we equip the method with an extrapolation scheme, in which a point extrapolated from previous iterates of the block variable $\bQ$ is used in the update of the block variable $\bP$. It is worth noting that such a scheme differs from those developed for accelerating various proximal block coordinate descent-type methods (see, e.g.,~\cite{pock2016inertial,gao2019gauss,hien2020inertial} and the references therein) and seems to be new. The resulting method, which we call \emph{proximal alternating minimization with extrapolation} (PAMe), admits an efficient implementation, as the update of each block can essentially be given in closed form. In particular, it has a per-iteration computational cost of $\mO(ndK+dK^2)$, which is competitive with the methods based on FP iterations in~\cite{kwak2008principal,nie2011robust,kim2019simple}. Thus, PAMe is well suited to tackle large instances of Problem~\eqref{L1-PCA}. Our numerical experiments on both synthetic and real-world datasets show that PAMe can be significantly faster than PAM and is competitive, in terms of both computational efficiency and solution quality, with a host of existing methods.

To shed light on the numerical performance of and obtain strong theoretical convergence guarantees for our proposed method PAMe, a key step is to characterize the growth behavior of the objective function of~\eqref{L1-PCA-Re} around the \emph{limiting critical points} (see Subsection~\ref{subsec:def} for the definition) of the problem. Towards that end, we first show that the Kurdyka-\L ojasiewicz (K\L) exponent at any limiting critical point of certain orthogonality constrained linear optimization (LO-OC) problem is $1/2$. This result is new and complements that in~\cite{Liu2018} for (homogeneous) quadratic optimization with orthogonality constraint. Moreover, it implies, through a calculus rule established in~\cite{li2017calculus}, that the K\L\ exponent at any limiting critical point of the original L1-PCA formulation~\eqref{L1-PCA} is $1/2$. Then, we relate the limiting critical points of~\eqref{L1-PCA-Re} to those of a particular instance of the LO-OC problem and show that the K\L\ exponent at any of the former is also $1/2$. 
With this characterization, we can utilize the analysis framework in~\cite{attouch2010proximal,attouch2013convergence} to establish the linear convergence of PAMe to a limit $(\bP^*,\bQ^*)$, which is a limiting critical point of Problem~\eqref{L1-PCA-Re}. Moreover, we show that the limit $\bQ^*$ is a \emph{critical point} (see Subsection~\ref{subsec:def} for the definition) of Problem~\eqref{L1-PCA} under certain conditions on the step sizes of PAMe. To the best of our knowledge, our work is the first to determine the K\L\ exponent at the limiting critical points of both~\eqref{L1-PCA} and~\eqref{L1-PCA-Re} and to present a first-order method that provably converges to a limiting critical point of~\eqref{L1-PCA-Re} at a linear rate.

The rest of this paper is organized as follows. In Section~\ref{sec:main}, we introduce our proposed method PAMe and present the main results of this paper.  We then prove the main results in Section~\ref{sec:pf-thm-1} (concerning the K\L\ exponent at the limiting critical points of Problems~\eqref{L1-PCA} and ,\eqref{L1-PCA-Re}) and Section~\ref{sec:pf-thm-2} (concerning the convergence behavior of PAMe). In Section~\ref{sec:num}, we report the numerical performance of PAMe and other existing methods on both synthetic and real-world datasets. We end with some closing remarks in Section~\ref{sec:clud}.

\subsection{Notation and Definitions} \label{subsec:def}
In addition to the notation introduced earlier, we will use the following throughout the paper. Let $\mO^n={\rm St}(n,n)$ denote the set of $n\times n$ orthogonal matrices and $\mathbb{S}^n$ denote the set of $n\times n$ symmetric matrices. For any $a\in\R$, let
\[
\sign(a) \in \left\{
\begin{array}{c@{\,\,\,}l}
\{a/|a|\}, & a \not= 0, \\
\noalign{\smallskip}
\{-1,1\}, & a = 0
\end{array}
\right.
\]
denote (a variant of) the sign function that will be used to express the subdifferential of $x \mapsto -|x|$. Given a matrix $\bA$, let $\sign(\bA)$ denote the matrix obtained by applying $\sign(\cdot)$ to each entry of $\bA$; $\|\bA\|_F$ and $\|\bA\|$ denote the Frobenius norm and spectral norm of $\bA$, respectively; $\lambda_k(\bA)$ denote the $k$-th largest eigenvalue of $\bA$ if $\bA$ is symmetric. 
Given a vector $\bx$, let $\diag(\bx)$ denote the diagonal matrix with $\bx$ as its diagonal. Given square matrices $\bY_1,\dots,\bY_n$, let $\bldg(\bY_1,\dots,\bY_n)$ denote the block diagonal matrix with $\bY_1,\dots,\bY_n$ as its diagonal blocks.

Next, we introduce some concepts in non-smooth analysis that will be needed in our subsequent development. The details can be found in, e.g.,~\cite{RW04}. For a non-empty closed set $\mS \subseteq \R^{p}$, the \emph{indicator function} $\delta_{\mS}: \R^{p} \rightarrow \{0,+\infty\}$ associated with $\mS$ is defined as 
\[
\delta_{\mS}(\bx) = \left\{
\begin{array}{c@{\,\,\,}l}
0, & \bx \in \mS, \\
+\infty, & \mbox{otherwise};
\end{array}
\right.
\]
the \emph{projection} onto $\mS$ is the set-valued mapping $\Proj_\mS:\R^{p}\rightrightarrows\R^{p}$ given by $\Proj_{\mS}(\bx) = \argmin_{\by \in \mS} \|\by-\bx\|_F$; the \emph{distance} between $\mS$ and another non-empty closed set $\mT\subseteq\R^{p}$ is defined as $\dist(\mS,\mT)=\inf_{\bx \in \mS, \, \by \in \mT} \| \bx-\by \|_F$.

Let $f:\R^{p} \rightarrow (-\infty,+\infty]$ be a given function. The \emph{domain} of $f$ is defined as $\dom(f)=\{\bx \in \R^{p}: f(\bx)<+\infty\}$. The function $f$ is said to be \emph{proper} if $\dom(f)\not=\emptyset$. A vector $\bv\in\R^{p}$ is said to be a \emph{Fr\'{e}chet subgradient} of $f$ at $\bx \in \dom(f)$ if
\be \label{eq:frech-subg}
\liminf_{\by\rightarrow\bx, \atop \by\not=\bx} \frac{ f(\by) - f(\bx) - \langle \bv,\by-\bx \rangle }{ \|\by-\bx\|_F } \ge 0.
\ee
The set of vectors $\bv \in \R^p$ satisfying~\eqref{eq:frech-subg} is called the \emph{Fr\'{e}chet subdifferential} of $f$ at $\bx \in \dom(f)$ and denoted by $\widehat{\partial}f(\bx)$. The \emph{limiting subdifferential}, or simply the \emph{subdifferential}, of $f$ at $\bx\in \dom(f)$ is defined as  
\[ 
\partial f(\bx) = \left\{ \bv\in\R^{p}: \exists \bx^k\rightarrow\bx, \, \bv^k \rightarrow \bv \, \mbox{ with } \, f(\bx^k)\rightarrow f(\bx), \, \bv^k\in\widehat{\partial} f(\bx^k) \right\}.
\] 
By convention, if $\bx\not\in\dom(f)$, then $\partial f(\bx) = \emptyset$. The \emph{domain} of $\partial f$ is defined as $\dom(\partial f) = \{\bx \in \R^p: \partial f(\bx) \not= \emptyset\}$. For the indicator function $\delta_{\mS}: \R^{p} \rightarrow \{0,+\infty\}$ associated with the non-empty closed set $\mS \subseteq \R^{p}$, we have
\[ \widehat{\partial}\delta_{\mS}(\bx) = \left\{ \bv\in\R^{p}: \limsup_{\by\rightarrow\bx, \, \by\in\mS \atop \by\not=\bx} \frac{\langle \bv, \by-\bx \rangle}{\|\by-\bx\|_F} \le 0 \right\} \quad\mbox{and}\quad \partial\delta_{\mS}(\bx) = \mN_{\mS}(\bx) \]
for any $\bx \in \mS$, where $\mN_{\mS}(\bx)$ is the \emph{normal cone} to $\mS$ at $\bx$. 

Now, suppose that the function $f:\R^{p} \rightarrow (-\infty,+\infty]$ is proper and lower semicontinuous. A point $\bx\in\R^{p}$ satisfying $\b0 \in \partial f(\bx)$ is called a \emph{limiting critical point} of $f$. By the generalized Fermat rule (see, e.g.,~\cite[Theorem 10.1]{RW04}), a local minimizer of $f$ is a limiting critical point of $f$. The function $f$ is said to have a \emph{K\L\ exponent} of $\theta \in [0,1)$ at the point $\bar{\bx} \in \dom(\partial f)$ if there exist constants $\epsilon, \eta>0$, $\nu \in (0,+\infty]$ such that
\[ \dist(\b0,\partial f(\bx)) \ge \eta( f(\bx) - f(\bar{\bx}) )^{\theta} \]
whenever $\| \bx - \bar{\bx} \|_F \le \epsilon$ and $f(\bar{\bx}) < f(\bx) < f(\bar{\bx}) + \nu$.

Upon writing Problem \eqref{L1-PCA} as
\be \label{L1-PCA-1}
\min_{\bQ \in \R^{d \times K}} \left\{ \ell(\bQ) = -\|\bX^T \bQ\|_1 + \delta_{{\rm St}(d,K)}(\bQ) \right\}
\ee
and invoking the subdifferential calculus rules in~\cite[Chapter 10B]{RW04}, we see that every locally optimal solution $\bQ\in{\rm St}(d,K)$ to Problem~\eqref{L1-PCA} satisfies
\be \label{eq:weak-opt}
\b0 \in - \bX\sign(\bX^T\bQ) + \mN_{{\rm St}(d,K)}(\bQ).
\ee
A point $\bQ \in {\rm St}(d,K)$ satisfying~\eqref{eq:weak-opt} is called a \emph{critical point} of $\ell$. It should be noted that every limiting critical point of $\ell$ is a critical point of $\ell$, but the converse is not known to hold.

\section{Main Results} \label{sec:main}

As mentioned in Subsection~\ref{subsec:contrib}, our strategy for tackling Problem~\eqref{L1-PCA} is to apply a proximal alternating minimization scheme to its two-block reformulation~\eqref{L1-PCA-Re}. Let us now formalize this strategy and introduce our proposed method PAMe.

To begin, observe that Problem~\eqref{L1-PCA-Re} can be written as
\be \label{eq:L1-PCA-2b}
\min_{\bP \in \R^{n \times K},\,\bQ \in \R^{d \times K}} \left\{ h(\bP,\bQ) = -\langle \bP, \bX^T \bQ \rangle + \delta_{\mB(n,K)}(\bP) + \delta_{{\rm St}(d,K)}(\bQ) \right\},
\ee
which is in a form that is amenable to the PAM method developed in~\cite{auslender1992asymptotic} (see also~\cite{attouch2010proximal}). Given the current iterate $(\bP^k,\bQ^k) \in \mB(n,K) \times {\rm St}(d,K)$, the method generates the next iterate $(\bP^{k+1},\bQ^{k+1}) \in \mB(n,K) \times {\rm St}(d,K)$ via
\begin{align}
\bP^{k+1} &\in \argmin_{\bP \in \mB(n,K)} \left\{ -\langle \bP, \bX^T \bQ^k \rangle + \frac{\alpha_k}{2}\|\bP - \bP^k\|_F^2 \right\}, \label{eq:pam-P} \\
\bQ^{k+1} &\in \argmin_{\bQ \in {\rm St}(d,K)} \left\{ -\langle \bP^{k+1}, \bX^T \bQ \rangle +\frac{\beta_k}{2}\|\bQ - \bQ^k\|_F^2 \right\}, \label{update-Q}
\end{align}
where $\alpha_k,\beta_k>0$ are the step sizes. Motivated by the desire to accelerate the PAM iterations, we incorporate an extrapolation step when updating the block variable $\bP$. Specifically, we replace~\eqref{eq:pam-P} by
\be\label{update-P}
\left\{\quad
\begin{split}
\bE^k &= \bQ^k + \gamma_k (\bQ^k-\bQ^{k-1}), \\
\bP^{k+1} &\in \argmin_{\bP \in \mB(n,K)} \left\{ -\langle \bP, \bX^T \bE^k \rangle + \frac{\alpha_k}{2}\|\bP - \bP^k\|_F^2 \right\}, 
\end{split}
\right.
\ee
where $\bE^k \in \R^{d \times K}$ is the point extrapolated from $\bQ^{k-1}$ and $\bQ^k$ and $\gamma_k \in [0,1)$ is the extrapolation parameter. 
Now, note that both $\bP^{k+1}$ in~\eqref{update-P} and $\bQ^{k+1}$ in~\eqref{update-Q} admit closed-form expressions. On one hand, it is easy to verify that
\be\label{closed-form-P}
\bP^{k+1} \in \sign(\bP^k+\bX^T\bE^k/\alpha_k).
\ee
On the other hand, the update~\eqref{update-Q} is an instance of the orthogonal Procrustes problem \cite{schonemann1966generalized}, whose solution is given by
\be\label{closed-form-Q}
\bQ^{k+1} = \bU^{k+1}{\bV^{k+1}}^T.
\ee
Here, $\bU^{k+1} \in {\rm St}(d,K)$ and $\bV^{k+1} \in \mO^K$ are obtained from a thin SVD $\bU^{k+1}\bSigma^{k+1}{\bV^{k+1}}^T$ of $\bQ^k + \bX\bP^{k+1}/\beta_k$. The above development leads to our proposed method PAMe, whose complete description can be found in~\eqref{alg:PAMe}. Since the costs of implementing~\eqref{closed-form-P} and~\eqref{closed-form-Q} are $\mO(ndK)$ and $\mO(dK^2)$, respectively, the per-iteration cost of~\eqref{alg:PAMe} is $\mO(ndK+dK^2)$, which is cheap when $K \ll \min\{n,d\}$.

\begin{algorithm}[!htbp]
	\caption{Proximal Alternating Minimization with Extrapolation (PAMe) for L1-PCA}  
	\begin{algorithmic}[1]  
		\STATE \textbf{Input:} $\bX \in \R^{d \times n}$, $\bP^0 \in \mB(n,K)$, $\bQ^{-1} = \bQ^0 \in {\rm St}(d,K)$ 
		\FOR{$k=0,1,2,\ldots$}
		\STATE  choose step sizes $\alpha_k,\beta_k>0$ and extrapolation parameter $\gamma_k \in [0,1)$;
		\STATE  set $\bE^k \leftarrow \bQ^k+\gamma_k(\bQ^k-\bQ^{k-1})$; \label{line:extrap}
		\STATE  pick $\bP^{k+1} \in \sign(\bP^k+\bX^T\bE^k/\alpha_k)$;
		\STATE  compute a thin SVD $\bQ^k+\bX\bP^{k+1}/\beta_k = \bU^{k+1}\bSigma^{k+1}{\bV^{k+1}}^T$;
		\STATE  set $\bQ^{k+1} \leftarrow \bU^{k+1}{\bV^{k+1}}^T$;
		\STATE  terminate if stopping criteria are met;
		\ENDFOR
	\end{algorithmic}
	\label{alg:PAMe}
\end{algorithm}

Although at first sight the extrapolation scheme introduced above is similar to those used in various inertial proximal block coordinate descent-type methods (see, e.g.,~\cite{pock2016inertial,gao2019gauss,hien2020inertial} and the references therein), there are two crucial differences. First, instead of performing an extrapolation step in each of block updates, PAMe performs such a step in only one of the block updates. Second, in most existing extrapolation schemes, each block update involves an extrapolation point that is obtained from previous iterates of that same block. By contrast, PAMe uses previous iterates of the block variable $\bQ$ to generate an extrapolation point for the update of the block variable $\bP$; see~\eqref{update-P}. As we shall see in Section~\ref{sec:num}, our proposed extrapolation scheme has better numerical performance than existing ones when tackling Problem \eqref{L1-PCA-Re}. It is also interesting to note that the updates in PAMe are similar to those obtained when applying, in a formal manner, proximal difference-of-convex algorithms with extrapolation (see, e.g.,~\cite{wen2018proximal,lu2019enhanced}) to Problem~\eqref{L1-PCA-1}. For instance, the update of the block variable $\bQ$ in the method pDCAe developed in~\cite{wen2018proximal} amounts to projecting $\bE^k + \bX\bm{\xi}^k/\beta_k$ with $\bm{\xi}^k \in \sign(\bX^T\bQ^k)$ onto ${\rm St}(d,K)$, while that in our proposed method PAMe amounts to projecting $\bQ^k + \bX\bm{\xi}^k/\beta_k$ with $\bm{\xi}^k \in \sign(\bP^k + \bX^T\bE^k/\alpha_k)$ onto ${\rm St}(d,K)$ (see~\eqref{update-Q} and~\eqref{closed-form-P}). In fact, for the update in PAMe, we can take $\bm{\xi}^k \in \sign(\bX^T\bE^k)$ when $\alpha_k>0$ is sufficiently small. This further brings out the resemblance between the updates in pDCAe and PAMe. Nevertheless, since the objective function $\ell$ of Problem~\eqref{L1-PCA-1} is not of the difference-of-convex type, existing analyses (such as those in~\cite{wen2018proximal,lu2019enhanced}) do not yield any guarantee on the convergence behavior of proximal difference-of-convex algorithms when applied to Problem~\eqref{L1-PCA-1}. Moreover, we observe that PAMe outperforms pDCAe in our numerical experiments; see Section~\ref{sec:num} for details.

Next, we present the main theoretical contributions of this paper. Our first result states that the objective function $\ell$ of Problem~\eqref{L1-PCA-1} (resp.~$h$ of Problem~\eqref{eq:L1-PCA-2b}) has a K\L\ exponent of $1/2$ at any of its limiting critical points. Combining this with the result in~\cite[Lemma 2.1]{li2017calculus}, we conclude that $\ell$ (resp. $h$) has a K\L\ exponent of $1/2$ at any $\bQ \in \dom(\partial\ell)$ (resp.~$(\bP,\bQ) \in \dom(\partial h)$). This opens the possibility of determining the convergence rates of a host of iterative methods for solving Problems~\eqref{L1-PCA-1} and~\eqref{eq:L1-PCA-2b}; see, e.g.,~\cite{attouch2010proximal,attouch2013convergence}.

\begin{thm}\label{thm:KL-L1-PCA}
Let $\bQ^* \in {\rm St}(d,K)$ be a limiting critical point of Problem~\eqref{L1-PCA-1}. Then, there exist $\epsilon_{\ell}\in(0,1)$ and $\eta_{\ell}>0$ such that for all $\bQ \in {\rm St}(d,K)$ with $\| \bQ-\bQ^* \|_F \le \epsilon_{\ell}$, 
\be \label{exponet-1}
\dist(\b0, \partial \ell(\bQ)) \ge \eta_{\ell} \left| \ell(\bQ) - \ell(\bQ^*) \right|^{1/2}.
\ee
Moreover, let $\bZ^* \in \mB(n,K) \times {\rm St}(d,K)$ be a limiting critical point of Problem~\eqref{eq:L1-PCA-2b}. Then, there exist $\epsilon_h \in (0,1)$ and $\eta_h >0$ such that for all $\bZ \in \mB(n,K) \times {\rm St}(d,K)$ with $\| \bZ-\bZ^* \|_F \le \epsilon_h$, 
\begin{equation}\label{exponet-2}
\dist(\b0, \partial h(\bZ)) \ge \eta_h |h(\bZ) - h(\bZ^*)|^{1/2}.
\end{equation}
\end{thm}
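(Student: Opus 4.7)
The plan is to reduce both statements to a single ``building block'': the KL exponent at every limiting critical point of an orthogonality-constrained linear optimization (LO-OC) problem,
\[
\tilde{f}_\bA(\bQ) = -\langle \bA, \bQ \rangle + \delta_{{\rm St}(d,K)}(\bQ), \qquad \bA \in \R^{d\times K},
\]
is $1/2$. The authors announce this LO-OC KL-exponent bound in Subsection~\ref{subsec:contrib} as a result they establish separately, extending the argument of~\cite{Liu2018} for homogeneous quadratic objectives; I take it as the main black box for the present theorem.

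For the first assertion, I would exploit the representation
\[
\ell(\bQ) = \min_{\bP \in \mB(n,K)} \tilde{f}_{\bX\bP}(\bQ),
\]
which follows from $|t| = \max\{t,-t\}$, to express $\ell$ as a pointwise minimum of \emph{finitely many} LO-OC-type functions. This sets up an application of the KL-exponent calculus rule for pointwise minima in~\cite[Lemma~2.1]{li2017calculus}. To invoke that rule, the critical point $\bQ^*$ must itself be a limiting critical point of each ``active branch'' $\tilde{f}_{\bX\bP}$ attaining the minimum at $\bQ^*$; the subdifferential formula~\eqref{eq:weak-opt}, together with the observation that on coordinates where $\bX^T\bQ^*$ is nonzero the sign pattern is locally constant (and on vanishing coordinates each active $\bP$ corresponds exactly to a choice of subgradient in $\sign(0)=\{-1,1\}$), makes this verification straightforward and also gives $\ell(\bQ^*) = \tilde{f}_{\bX\bP}(\bQ^*)$ for every active branch. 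Applying the LO-OC lemma to each such $\tilde{f}_{\bX\bP}$ and then invoking the calculus rule yields~\eqref{exponet-1}.

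For the second assertion, the key structural observation is that $\mB(n,K)$ is \emph{discrete}: any two distinct matrices in $\mB(n,K)$ differ in at least one entry and hence have Frobenius distance at least $2$. Choosing any $\epsilon_h \in (0,2)$, the condition $\|\bZ - \bZ^*\|_F \le \epsilon_h$ for $\bZ = (\bP,\bQ) \in \mB(n,K)\times {\rm St}(d,K)$ forces $\bP = \bP^*$, so locally $h(\bP^*,\bQ) = \tilde{f}_{\bX\bP^*}(\bQ)$. Discreteness also gives $\partial\delta_{\mB(n,K)}(\bP^*) = \R^{n\times K}$, so that
\[
\partial h(\bP^*,\bQ) = \R^{n\times K} \times \partial \tilde{f}_{\bX\bP^*}(\bQ), \qquad \dist(\b0,\partial h(\bP^*,\bQ)) = \dist(\b0,\partial \tilde{f}_{\bX\bP^*}(\bQ)).
\]
Projecting the limiting critical point condition $\b0 \in \partial h(\bZ^*)$ onto the $\bQ$-coordinate shows that $\bQ^*$ is a limiting critical point of $\tilde{f}_{\bX\bP^*}$, and the LO-OC lemma at $\bQ^*$ then delivers~\eqref{exponet-2}.

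The main obstacle is undoubtedly the LO-OC KL-exponent result, which is the genuinely new piece and will require matching the first-order optimality condition on ${\rm St}(d,K)$ (in its symmetrized form $\bA^T\bQ = \bQ^T\bA$) with a quantitative error bound near the critical point, probably through a careful SVD/polar-decomposition analysis analogous to~\cite{Liu2018}. Once that is in hand, the rest of the proof---the finite-min calculus for $\ell$ and the discreteness-based reduction for $h$---is essentially bookkeeping.
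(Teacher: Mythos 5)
Your overall route is the same as the paper's: for \eqref{exponet-1} you write $\ell$ as the pointwise minimum of the $2^{nK}$ branches $\bQ \mapsto \langle \bX\bP_i,\bQ\rangle + \delta_{{\rm St}(d,K)}(\bQ)$ and feed the LO-OC K\L\ result (the paper's Theorem~\ref{thm:KL-LO-OC}, which the paper likewise proves separately and then invokes) into the Li--Pong calculus for finite minima; for \eqref{exponet-2} you use the discreteness of $\mB(n,K)$ to force $\bP=\bP^*$, the product structure of $\partial h$, and the LO-OC result applied to the branch $\bQ \mapsto -\langle \bX\bP^*,\bQ\rangle + \delta_{{\rm St}(d,K)}(\bQ)$. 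The second half matches the paper's argument and is correct.

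The problem is in your justification of the first half. You assert that the calculus rule requires $\bQ^*$ to be a limiting critical point of \emph{every} active branch, and that this is ``straightforward'' from \eqref{eq:weak-opt}. Both parts of this are wrong. Criticality of $\ell$ at $\bQ^*$ only furnishes \emph{one} selection $\bm{\xi} \in \sign(\bX^T\bQ^*)$ with $\bX\bm{\xi} \in \mN_{{\rm St}(d,K)}(\bQ^*)$; the other active branches, obtained by flipping signs on the zero entries of $\bX^T\bQ^*$, need not be critical at $\bQ^*$. A concrete counterexample with $d=2$, $K=1$: take $\bx_1=(0,1)$, $\bx_2=(1,1)$, $\bx_3=(1,-2)$ and $\bq^*=(1,0)$. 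Approaching $\bq^*$ through points with positive second coordinate, $\ell$ is locally $\bq \mapsto -\langle (2,0),\bq\rangle + \delta_{{\rm St}(2,1)}(\bq)$ and $-(2,0)+2\bq \rightarrow \b0$, so $\bq^*$ is a limiting critical point of $\ell$; the branch $\bP=(1,1,1)$ is critical there since $\bX\bP=(2,0)$ is parallel to $\bq^*$, but the equally active branch $\bP'=(-1,1,1)$ gives $\bX\bP'=(2,-2)$, whose tangential component at $\bq^*$ has norm $2$, so $\bq^*$ is \emph{not} a critical point of that branch. Fortunately the claim is also unnecessary: the min-calculus rule (this is Corollary~3.1/Theorem~3.1 of \cite{li2017calculus}, not Lemma~2.1 as you cite) only needs each active branch to satisfy the K\L\ inequality with exponent $1/2$ at $\bQ^*$, and at branches for which $\bQ^*$ is not critical this holds for free, since $\dist(\b0,\partial\ell_i(\cdot))$ is bounded away from zero near $\bQ^*$ while $|\ell_i - \ell_i(\bQ^*)|$ is bounded --- this is precisely \cite[Lemma~2.1]{li2017calculus}, which is exactly how the paper handles such branches. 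With that repair (non-critical active branches via Lemma~2.1, critical ones via Theorem~\ref{thm:KL-LO-OC}) your argument coincides with the paper's proof; note also that to obtain the two-sided bound with $|\ell(\bQ)-\ell(\bQ^*)|$ and $\nu=+\infty$ as stated in the theorem, you should, as the paper does, adapt the proof of the calculus rule (using $\partial\ell(\bQ) \subseteq \bigcup_{i\in\mI(\bQ)}\partial\ell_i(\bQ)$ and $\mI(\bQ)\subseteq\mI(\bQ^*)$ near $\bQ^*$) rather than quote the one-sided K\L\ statement as a black box.
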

Theorem~\ref{thm:KL-L1-PCA} implies that we can take $\epsilon=\epsilon_\ell$, $\eta=\eta_{\ell}$, and $\nu = +\infty$ (resp.~$\epsilon=\epsilon_h$, $\eta=\eta_h$, and $\nu = +\infty$) in the definition of the K\L\ exponent of $\ell$ (resp.~$h$) at $\bQ^*$ (resp.~$\bZ^*$); see Subsection~\ref{subsec:def}. We remark that the constants $\epsilon_{\ell}$, $\eta_{\ell}$ and $\epsilon_h$, $\eta_h$ can be determined explicitly; see Subsection~\ref{subsec:pf-thm-1}.

With Theorem~\ref{thm:KL-L1-PCA} at our disposal, we can study the convergence behavior of PAMe (Algorithm \eqref{alg:PAMe}). Our second result has two parts. The first part states that with suitable choices of the parameters in PAMe, the iterates $\{(\bP^k,\bQ^k)\}_{k\ge0}$ generated by the method will converge linearly to a limiting critical point $(\bP^*,\bQ^*)$ of Problem~\eqref{eq:L1-PCA-2b}. Now, since our original problem of interest is Problem~\eqref{L1-PCA-1}, a natural question would be whether $\bQ^*$ is one of its (limiting) critical points. Unfortunately, we do not yet know the answer to this question. The second part of our result, which provides a partial answer, gives a sufficient condition for $\bQ^*$ to be a critical point of Problem~\eqref{L1-PCA-1} (i.e., $\bQ^*$ satisfies~\eqref{eq:weak-opt}).

\begin{thm}\label{thm:liner-conv}
Let $\{(\bP^k,\bQ^k)\}_{k\ge0}$ be the sequence of iterates generated by Algorithm~\ref{alg:PAMe}, where the step sizes $\{\alpha_k\}_{k\ge0}$, $\{\beta_k\}_{k\ge0}$ and extrapolation parameters $\{\gamma_k\}_{k\ge0}$ satisfy (i) $\alpha_* \le \alpha_k \le \alpha^*$ for some $\alpha_*,\alpha^* \in (0,+\infty)$, (ii) $3\beta_*/2 \le \beta_k \le \beta^*$ for some $\beta_*,\beta^* \in (0,+\infty)$, and (iii) $0 \le \gamma_k < \gamma^* = \min\{ 1, \alpha_*\beta_*/2\|\bX\|^2 \}$. Then, the sequence $\{(\bP^k,\bQ^k)\}_{k\ge0}$ converges at least linearly to a limiting critical point $(\bP^*,\bQ^*)$ of Problem~\eqref{eq:L1-PCA-2b}. 
Moreover, if $\alpha_k = \alpha_*$ for $k\ge0$, then $\bQ^*$ is a solution to the following generalized equation:
\be \label{eq:weak-fpt-incl}
\b0 \in -\bX\sign(\bP^* + \bX^T\bQ/\alpha_*) + \mN_{{\rm St}(d,K)}(\bQ).
\ee
In particular, if $\alpha_*$ satisfies
\be \label{eq:alpha-range}
0 <  \alpha_* < \min\{ |(\bX^T\bQ^*)_{ij}| : (\bX^T\bQ^*)_{ij} \not= 0; \, i=1,\ldots,n; \, j=1,\ldots,K\}, 
\ee
then $\bQ^*$ is a critical point of Problem~\eqref{L1-PCA-1}. Conversely, every critical point $\bar{\bQ}$ of Problem~\eqref{L1-PCA-1} satisfying $\b0 \in -\bX\bP^* + \mN_{{\rm St}(d,K)}(\bar{\bQ})$ and $\bP^* \in \sign(\bX^T\bar{\bQ})$ is a solution to the generalized equation~\eqref{eq:weak-fpt-incl}, regardless of whether~\eqref{eq:alpha-range} holds.
\end{thm}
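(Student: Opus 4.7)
The plan is to apply the Attouch-Bolte-Svaiter descent framework~\cite{attouch2010proximal,attouch2013convergence} to a suitably constructed Lyapunov function, leveraging the K\L{} exponent $1/2$ from Theorem~\ref{thm:KL-L1-PCA}, and then extract the remaining (generalized-equation and criticality) claims by passing to the limit in the closed-form updates~\eqref{closed-form-P}--\eqref{closed-form-Q} and performing an entrywise case analysis on the set-valued $\sign$ map.

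\emph{Linear convergence.} The starting identity is
\[
h(\bZ^{k+1}) - h(\bZ^k) = -\langle \bP^{k+1},\bX^T(\bQ^{k+1}-\bQ^k)\rangle - \langle \bP^{k+1}-\bP^k,\bX^T\bQ^k\rangle.
\]
The $\bQ$-update optimality in~\eqref{update-Q} bounds the first term by $-(\beta_k/2)\|\bQ^{k+1}-\bQ^k\|_F^2$. For the second term, writing $\bX^T\bQ^k = \bX^T\bE^k - \gamma_k\bX^T(\bQ^k-\bQ^{k-1})$, using the $\bP$-update optimality in~\eqref{update-P}, and splitting the resulting cross product via Young's inequality yields a bound of $-(\alpha_k/4)\|\bP^{k+1}-\bP^k\|_F^2 + (\gamma_k^2\|\bX\|^2/\alpha_k)\|\bQ^k-\bQ^{k-1}\|_F^2$. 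I would then define the Lyapunov function $\Psi_k = h(\bZ^k) + (\beta_*/2)\|\bQ^k-\bQ^{k-1}\|_F^2$; under conditions (i)--(iii)---specifically $\beta_k\ge 3\beta_*/2$ absorbs the $\|\bQ^{k+1}-\bQ^k\|_F^2$ coefficient while $\gamma_k<\alpha_*\beta_*/(2\|\bX\|^2)$ forces $\gamma_k^2\|\bX\|^2/\alpha_k<\beta_*/2$---this gives a sufficient decrease $\Psi_{k+1}-\Psi_k\le -\rho(\|\bP^{k+1}-\bP^k\|_F^2 + \|\bQ^{k+1}-\bQ^k\|_F^2 + \|\bQ^k-\bQ^{k-1}\|_F^2)$ for some $\rho>0$. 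For the subgradient bound, the $\bQ$-update optimality gives $-\beta_k(\bQ^{k+1}-\bQ^k)\in -\bX\bP^{k+1}+\mN_{{\rm St}(d,K)}(\bQ^{k+1})=\partial_\bQ h(\bZ^{k+1})$, and since $\mB(n,K)$ is discrete we have $\partial_\bP\delta_{\mB(n,K)}(\bP^{k+1})=\R^{n\times K}\ni\b0$, so $\dist(\b0,\partial h(\bZ^{k+1}))\le \beta^*\|\bQ^{k+1}-\bQ^k\|_F$. Summability of $\|\bZ^{k+1}-\bZ^k\|_F^2$, compactness of $\mB(n,K)\times{\rm St}(d,K)$, and outer semicontinuity of $\partial h$ then imply that every cluster point of $\{\bZ^k\}$ is a limiting critical point of $h$. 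Lifting the K\L{} exponent $1/2$ from $h$ to the augmented Lyapunov $(\bZ,\bY)\mapsto h(\bZ)+(\beta_*/2)\|\bQ-\bY\|_F^2$ via~\cite[Lemma 2.1]{li2017calculus} and invoking the Attouch-Bolte-Svaiter machinery delivers linear convergence of the full sequence to a limiting critical point $\bZ^*=(\bP^*,\bQ^*)$ of~\eqref{eq:L1-PCA-2b}.

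\emph{Fixed-point inclusion, criticality, and converse.} With $\alpha_k\equiv\alpha_*$, the update $\bP^{k+1}\in\sign(\bP^k+\bX^T\bE^k/\alpha_*)$ together with $\bE^k\to\bQ^*$ (from $\bQ^k\to\bQ^*$ and $\bQ^k-\bQ^{k-1}\to\b0$) and the closedness of the graph of the entrywise $\sign$ map passes to the limit to yield $\bP^*\in\sign(\bP^*+\bX^T\bQ^*/\alpha_*)$; combined with $\bX\bP^*\in\mN_{{\rm St}(d,K)}(\bQ^*)$ from the previous step, this gives~\eqref{eq:weak-fpt-incl} at $\bQ=\bQ^*$. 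Under~\eqref{eq:alpha-range}, an entrywise case analysis shows that whenever $(\bX^T\bQ^*)_{ij}\neq 0$, we have $|(\bX^T\bQ^*)_{ij}/\alpha_*|>1=|P^*_{ij}|$, so $(\bP^*+\bX^T\bQ^*/\alpha_*)_{ij}$ has the same sign as $(\bX^T\bQ^*)_{ij}$ and $P^*_{ij}$ must equal this sign; the zero case is handled by the set-valued definition of $\sign$. This forces $\bP^*\in\sign(\bX^T\bQ^*)$ and upgrades~\eqref{eq:weak-fpt-incl} to the critical-point condition~\eqref{eq:weak-opt}. For the converse, given $\bar\bQ$ with $\bX\bP^*\in\mN_{{\rm St}(d,K)}(\bar\bQ)$ and $\bP^*\in\sign(\bX^T\bar\bQ)$, the same kind of entrywise check verifies $\bP^*\in\sign(\bP^*+\bX^T\bar\bQ/\alpha_*)$ for any $\alpha_*>0$ (the two summands either agree in sign or the second vanishes and the set-valued $\sign$ trivially contains $P^*_{ij}$), from which~\eqref{eq:weak-fpt-incl} at $\bQ=\bar\bQ$ follows.

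\emph{Main obstacle.} The delicate part is engineering the Lyapunov function and the parameter schedule so that the extrapolation---which couples $\bP^{k+1}$ to both $\bQ^k$ and $\bQ^{k-1}$---does not destroy monotonicity of $h$ along iterates. The bound $\gamma^*=\min\{1,\alpha_*\beta_*/(2\|\bX\|^2)\}$ together with the specific factor $3/2$ in $\beta_k\ge 3\beta_*/2$ is precisely what is needed so that the residual term $(\gamma_k^2\|\bX\|^2/\alpha_k)\|\bQ^k-\bQ^{k-1}\|_F^2$ created by the extrapolation is strictly dominated by the Lyapunov shift and the slack in the coefficient of $\|\bQ^{k+1}-\bQ^k\|_F^2$, producing a uniformly strictly negative descent coefficient. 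A secondary subtlety is transferring the K\L{} exponent from $h$ to the augmented Lyapunov function, which is handled cleanly by the calculus rule in~\cite{li2017calculus}.
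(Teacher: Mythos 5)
Your proposal is correct and follows essentially the same route as the paper: the same potential function $\Psi_{\beta_*}(\bP,\bQ,\bQ')=h(\bP,\bQ)+\tfrac{\beta_*}{2}\|\bQ-\bQ'\|_F^2$, the same sufficient-decrease and relative-error estimates fed into the Attouch et al.\ descent framework after lifting the K\L\ exponent $1/2$ via the Li--Pong calculus rules, and the same limit-passing and entrywise $\sign$ analysis that the paper packages as Lemmas~\ref{lem:rela-H-Psi} and~\ref{lem:rela-PCA} for the generalized equation, the criticality claim under~\eqref{eq:alpha-range}, and the converse. Only minor touch-ups are needed: the relative-error bound must be stated for $\partial\Psi_{\beta_*}(\bC^{k+1})$ rather than $\partial h(\bZ^{k+1})$ (immediate from your inclusions, since the extra block only contributes $\beta_*\|\bQ^{k+1}-\bQ^k\|_F$ terms), the lifting of the exponent to $\Psi_{\beta_*}$ is the job of \cite[Theorem 3.6]{li2017calculus} combined with \cite[Lemma 2.1]{li2017calculus} (the latter alone only covers non-critical points), while your observation that $\mN_{\mB(n,K)}(\bP)=\R^{n\times K}$ at the isolated points of $\mB(n,K)$ is a valid slight simplification of the paper's Proposition~\ref{lem:algo}(c).
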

It is worth noting that the sufficient condition~\eqref{eq:alpha-range} is \emph{efficiently verifiable}; i.e., after obtaining the limit point $\bQ^*$, one can efficiently verify whether~\eqref{eq:alpha-range} holds. Moreover, condition~\eqref{eq:alpha-range} suggests that PAMe is more likely to return a critical point of Problem~\eqref{L1-PCA-1} if we choose a smaller step size $\alpha_*$. In fact, we observe from our numerical experiments that a small $\alpha_*$ often leads to favorable performance of PAMe on the L1-PCA problem; see Section~\ref{sec:num} for details.

\section{Characterizing the K\L\ exponent for Problems \eqref{L1-PCA-1} and \eqref{eq:L1-PCA-2b}} \label{sec:pf-thm-1}

Our goal in this section is to prove Theorem~\ref{thm:KL-L1-PCA}. This is achieved in three steps. First, we invoke a calculus rule established in~\cite{li2017calculus} to show that the task of estimating the K\L\ exponent at a limiting critical point of Problem~\eqref{L1-PCA-1} reduces to that of estimating the K\L\ exponent at a limiting critical point of an LO-OC problem. Then, we establish a local error bound for the LO-OC problem and use it to characterize the K\L\ exponent for that problem. Lastly, we utilize the result obtained for the LO-OC problem and the structures of Problems \eqref{L1-PCA-1} and \eqref{eq:L1-PCA-2b} to complete the proof.

\subsection{Relation with Linear Optimization over the Stiefel Manifold} \label{subsec:kl-calc}
Let $\bP_1,\ldots,\bP_{2^{nK}}$ be an enumeration of the elements in $\mB(n,K)$. By definition of the $\ell_1$-norm, we can express the objective function $\ell$ of Problem~\eqref{L1-PCA-1} as the pointwise minimum of finitely many proper, lower semicontinuous functions:
\[ 
\ell(\bQ) = \min_{i \in \{1,\ldots,2^{nK}\}} \big\{ \underbrace{\langle \bX\bP_i, \bQ \rangle + \delta_{{\rm St}(d,K)}(\bQ)}_{\ell_i(\bQ)} \big\}. \] 
Since $\dom(\partial\ell) \subseteq \dom(\ell) = {\rm St}(d,K)$ by definition and $\ell(\bQ) = -\| \bX^T\bQ \|_1$ for any $\bQ \in {\rm St}(d,K)$, it is immediate that $\ell$ is continuous on $\dom(\partial\ell)$. Moreover, using the result in~\cite[Exercise 8.8]{RW04}, we have $\partial\ell_i(\bQ) = \bX\bP_i + \mN_{{\rm St}(d,K)}(\bQ)$ for any $\bQ \in {\rm St}(d,K)$, which implies that $\dom(\ell_i) = \dom(\partial\ell_i) = {\rm St}(d,K)$ for $i=1,\ldots,2^{nK}$. Thus, by~\cite[Corollary 3.1]{li2017calculus}, in order to determine the K\L\ exponent of the function $\ell$ at a point $\bar{\bx} \in \dom(\partial \ell)$, it suffices to determine the K\L\ exponents of the functions $\ell_1,\ldots,\ell_{2^{nK}}$ at the point $\bar{\bx}$. 
Noting that $\ell_i$ ($i=1,\ldots,2^{nK}$) is the sum of a linear function and the indicator function associated with ${\rm St}(d,K)$, a natural approach is to study the following general LO-OC problem, where $\bA \in \R^{d \times K}$ is any given matrix:
\begin{align}\label{LP-OC} \tag{{\sf LO-OC}}
\min_{\bQ \in \R^{d\times K}} \left\{ g(\bQ) = \langle \bA,\bQ \rangle + \delta_{{\rm St}(d,K)}(\bQ) \right\}.
\end{align} 
By~\cite[Lemma 2.1]{li2017calculus}, for any $\theta \in [0,1)$, the function $g$ has a K\L\ exponent of $\theta$ at any of its non-limiting critical point. Thus, we shall focus on determining the K\L\ exponents of $g$ at its limiting critical points. 

\subsection{Estimating the K\L\ Exponent for Problem~\eqref{LP-OC}}
Let
\[ \mQ = \{ \bQ \in {\rm St}(d,K): \b0 \in \partial g(\bQ) \} \]
denote the set of limiting critical points of Problem~\eqref{LP-OC}. Based on the development in the previous subsection, our next step is to prove the following result, which can be of independent interest.
\begin{thm} \label{thm:KL-LO-OC}
There exist $\epsilon_g \in (0,1)$, $\eta_g>0$ such that for all $\bQ \in {\rm St}(d,K)$ and $\bQ^* \in \mQ$ with $\|\bQ - \bQ^*\|_F \le \epsilon_g$,
\[
\dist\left( \b0,\partial g(\bQ) \right) \ge \eta_g \left| g(\bQ) - g(\bQ^*) \right|^{1/2}.
\]
\end{thm}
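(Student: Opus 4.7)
My plan is to prove Theorem~\ref{thm:KL-LO-OC} by a direct local expansion. At any limiting critical point $\bQ^*\in\mQ$, I would compute both $\dist(\b0,\partial g(\bQ))$ and $|g(\bQ)-g(\bQ^*)|$ to leading order in the Stiefel perturbation $\bm{\Delta}=\bQ-\bQ^*$, verify that both are $O(\|\bm{\Delta}\|_F^2)$, and explicitly compare the two resulting quadratic forms. A uniform constant $\eta_g$ would then be extracted via compactness of $\mQ\subseteq{\rm St}(d,K)$.

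Since $\mN_{{\rm St}(d,K)}(\bQ)=\{\bQ\bS:\bS\in\S^K\}$, minimizing $\|\bA+\bQ\bS\|_F^2$ over symmetric $\bS$ in closed form yields
\[
\dist(\b0,\partial g(\bQ))^2 \;=\; \|\bA\|_F^2 \;-\; \tfrac{1}{4}\|\bA^T\bQ+\bQ^T\bA\|_F^2.
\]
The critical-point condition at $\bQ^*$ becomes $\bA=-\bQ^*\bS^*$ with $\bS^*:=-\bQ^{*T}\bA\in\S^K$; in particular $\bA=\bQ^*\bQ^{*T}\bA$ and $\|\bA\|_F=\|\bS^*\|_F$. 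Writing $\bQ=\bQ^*+\bm{\Delta}$ with $\bQ\in{\rm St}(d,K)$, the constraint $\bQ^T\bQ=\bI_K$ gives the identity $\bQ^{*T}\bm{\Delta}+\bm{\Delta}^T\bQ^*=-\bm{\Delta}^T\bm{\Delta}$, which combined with the symmetry of $\bS^*$ produces the concise formula
\[
g(\bQ)-g(\bQ^*)\;=\;\langle\bA,\bm{\Delta}\rangle\;=\;-\tfrac{1}{2}\operatorname{tr}\bigl(\bS^*\bm{\Delta}^T\bm{\Delta}\bigr),
\]
already of order $\|\bm{\Delta}\|_F^2$. Decomposing $\bm{\Delta}=\bQ^*\bC+\bU\bD$, where $\bU\in{\rm St}(d,d-K)$ extends $\bQ^*$ to an orthonormal basis of $\R^d$, and expanding the squared subdifferential distance around $\bQ^*$, the $\|\bS^*\|_F^2$ term cancels against $\|\bA\|_F^2$ and one is left with the leading quadratic form
\[
\dist(\b0,\partial g(\bQ))^2\;=\;\|\bC\bS^*\|_F^2+\|\bD\bS^*\|_F^2-\tfrac{1}{4}\|\bS^*\bC+\bC^T\bS^*\|_F^2+O(\|\bm{\Delta}\|_F^3),
\]
again quadratic in $(\bC,\bD)$.

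The technical core, and the main obstacle, is to show that the quadratic form above dominates a positive multiple of $\tfrac{1}{2}|\operatorname{tr}(\bS^*(\bC^T\bC+\bD^T\bD))|$. I would diagonalize $\bS^*=\bV\bm{\Lambda}\bV^T$ with $\bm{\Lambda}=\diag(\lambda_1,\dots,\lambda_K)$; after passing to the eigenbasis (equivalently, replacing $\bQ^*$ with $\bQ^*\bV$) and using the Stiefel identity to conclude that $\bC$ is antisymmetric up to an $O(\|\bm{\Delta}\|_F^2)$ correction, the $\bC$-contribution to the subdifferential bound reduces to $\tfrac{1}{2}\sum_{i<j}(\lambda_i+\lambda_j)^2\bC_{ij}^2$, while the corresponding part of $|g(\bQ)-g(\bQ^*)|$ reduces to $\tfrac{1}{2}\bigl|\sum_{i<j}(\lambda_i+\lambda_j)\bC_{ij}^2\bigr|$; the $\bD$-contributions compare entrywise as $\lambda_j^2$ against $|\lambda_j|$. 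Hence pairs with $\lambda_i+\lambda_j\ne0$ and indices with $\lambda_j\ne0$ produce the desired quadratic lower bound with constant controlled by $\min|\lambda_i+\lambda_j|$ and $\min|\lambda_j|$ over the nonzero values, while the remaining directions are tangent to $\mQ$ at $\bQ^*$ and contribute zero to both sides. To obtain uniform constants $\epsilon_g,\eta_g$ over $\bQ^*\in\mQ$, I would exploit the identity $\bS^{*2}=\bA^T\bA$ (so the magnitudes $|\lambda_i|$ are the fixed singular values of $\bA$), combined with compactness of $\mQ$ and a covering argument over the finitely many sign patterns of $\bm{\Lambda}$; the higher-order remainders are then absorbed by shrinking $\epsilon_g$.
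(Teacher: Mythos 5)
Your reduction is set up correctly, and in fact your two displayed formulas are sharper than you claim: writing $\bm{\Delta}=\bQ-\bQ^*$, $\bC=\bQ^{*T}\bm{\Delta}$, $\bD$ the component along an orthonormal complement of $\bQ^*$, and using the exact Stiefel identity $\bC+\bC^T=-(\bC^T\bC+\bD^T\bD)$, one gets \emph{exactly} $\dist^2(\b0,\partial g(\bQ))=\|\bC\bS^*\|_F^2+\|\bD\bS^*\|_F^2-\tfrac14\|\bS^*\bC+\bC^T\bS^*\|_F^2$ and $g(\bQ)-g(\bQ^*)=\tfrac12\langle\bS^*,\bC^T\bC+\bD^T\bD\rangle$ (your sign on the latter is off, which is immaterial), and your pairwise reduction $\tfrac12\sum_{i<j}(\lambda_i+\lambda_j)^2\bC_{ij}^2$ versus $\tfrac12\bigl|\sum_{i<j}(\lambda_i+\lambda_j)\bC_{ij}^2\bigr|$ is correct for the antisymmetric part. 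The genuine gap is the last step, where the degenerate directions ($\lambda_i+\lambda_j=0$ or $\lambda_j=0$, i.e.\ repeated singular values with opposite signs in $\bq$, zero singular values, or $r<K$) are dismissed as ``tangent to $\mQ$, contributing zero to both sides,'' and all remainders are to be ``absorbed by shrinking $\epsilon_g$.'' Those directions are tangent to a \emph{positive-dimensional} component of $\mQ$, and after splitting $\bC=\bm{K}+\bE$ with $\bm{K}$ antisymmetric and $\bE=-\tfrac12(\bC^T\bC+\bD^T\bD)$, both exact expressions acquire cross terms of the type $\operatorname{tr}(\bS^{*2}\bm{K}\bE)$ and $\operatorname{tr}(\bm{K}\bS^*\bE)-\operatorname{tr}(\bm{K}\bE\bS^*)$, of size $O(\|\bm{\Delta}\|_F^3)$ and not zero in general (note also that the quadratic form $\|\bC\bS^*\|_F^2-\tfrac14\|\bS^*\bC+\bC^T\bS^*\|_F^2$ is \emph{not} positive semidefinite on unconstrained $\bC$, e.g.\ $\bS^*=\diag(1,0)$, $\bC=\bm{e}_1\bm{e}_2^T$, so one cannot simply drop the constraint-induced terms). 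Now take $\bm{\Delta}$ with tangent/degenerate component of size $s\simeq\epsilon_g$ and nondegenerate component of size $t\ll s$ (i.e.\ $\bQ$ close to the critical component but far from the base point $\bQ^*$ along it): both sides of the target inequality are of order $t^2$, while the discarded terms are of order $s^3$, so the retained bound ``nondegenerate form $-O(\|\bm{\Delta}\|^3)\ge\eta^2(\text{nondegenerate form}+O(\|\bm{\Delta}\|^3))$'' is vacuous no matter how small $\epsilon_g$ is. Your argument does close when $\bA$ has distinct nonzero singular values and $r=K$ (isolated critical points), but the theorem—and its application with $\bA=\bX\bP_i$ arbitrary—needs the general case.

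The missing ingredient is exactly what drives the paper's different route: one must control the distance to the \emph{whole} critical component rather than to the fixed point $\bQ^*$. The paper first proves the error bound $\dist(\bQ,\bar{\mQ}_{\bq})\le\kappa\|R(\bQ)\|_F$ (Theorem~\ref{lem:EB}), shows $g$ is constant on each $\bar{\mQ}_{\bq}$ and that distinct components are $2$-separated (Proposition~\ref{prop:dist-Q}), and then estimates $|g(\bQ)-g(\bQ^*)|=|g(\bQ)-g(\tbQ^*)|\le\tfrac12\|\bA\|\,\|\bQ-\tbQ^*\|_F^2$ with $\tbQ^*$ the \emph{nearest} critical point; re-centering at $\tbQ^*$ removes precisely the large tangent component that wrecks your remainder estimate. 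Note also that your compactness/covering step for uniform $\epsilon_g,\eta_g$ silently uses the same facts (nearby critical points must share the critical value, which needs constancy on components plus their separation). So to repair your proof you would have to expand at the nearest point of $\mQ$ instead of at $\bQ^*$, and quantifying ``nearest'' in terms of $\|R(\bQ)\|_F$ is exactly the error bound—at which point you have reconstructed the paper's argument.
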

Theorem~\ref{thm:KL-LO-OC} implies that the K\L\ exponent at any limiting critical point of Problem~\eqref{LP-OC} is $1/2$ with $\epsilon=\epsilon_g$, $\eta=\eta_g$, and $\nu=+\infty$. It is worth noting that the constants $\epsilon,\eta,\nu$ are uniform over all limiting critical points in $\mQ$. 

The proof of Theorem~\ref{thm:KL-LO-OC} can be divided into three parts. As it is quite long and technical, readers who are interested in how Theorem~\ref{thm:KL-LO-OC} is used to complete the proof of Theorem~\ref{thm:KL-L1-PCA} can skip ahead to Subsection~\ref{subsec:pf-thm-1}.

\subsubsection{Structure of the Limiting Critical Point Set}
We begin with the following result, which provides, among other things, a characterization of $\mQ$.
\begin{prop}\label{prop:subd-G}
Consider the map $R:{\rm St}(d,K) \rightarrow \R^{d \times K}$ given by
\be \label{R(Q)}
R(\bQ) = \bA - \bQ\bA^T\bQ.
\ee
We have
\be\label{rst-1:prop-subd-G}
\dist(\b0,\partial g(\bQ)) = \left\| \left(\bI_d-\frac{1}{2}\bQ\bQ^T\right)R(\bQ)\right\|_F 
\ee
and
\[
\frac{1}{2}\|R(\bQ)\|_F \le \dist(\b0,\partial g(\bQ)) \le \|R(\bQ)\|_F.
\]
In particular, we have $\bQ \in \mQ$ if and only if
\be\label{rst-2:prop-subd-G}
\bQ \in {\rm St}(d,K) \quad\mbox{and}\quad R(\bQ) = \b0.
\ee
\end{prop}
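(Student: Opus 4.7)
\textbf{Proof plan for Proposition~\ref{prop:subd-G}.} The plan is to reduce the computation of $\dist(\b0,\partial g(\bQ))$ to an explicit minimization over the normal cone of the Stiefel manifold and then to recognize that the minimizer produces exactly the expression $(\bI_d - \tfrac{1}{2}\bQ\bQ^T)R(\bQ)$. The first step will be to apply the subdifferential sum rule to $g(\bQ) = \langle \bA, \bQ\rangle + \delta_{{\rm St}(d,K)}(\bQ)$, which gives $\partial g(\bQ) = \bA + \mN_{{\rm St}(d,K)}(\bQ)$, and to invoke the standard characterization $\mN_{{\rm St}(d,K)}(\bQ) = \{\bQ\bS : \bS \in \mathbb{S}^K\}$ for the normal cone to the Stiefel manifold at $\bQ$.

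Next, I would use $\bQ^T\bQ = \bI_K$ to expand
\[
\|\bA + \bQ\bS\|_F^2 = \|\bA\|_F^2 + 2\langle \bS, \bQ^T\bA\rangle + \|\bS\|_F^2
\]
and, since $\bS$ is symmetric, replace $\bQ^T\bA$ by its symmetrization $\bM := \tfrac{1}{2}(\bQ^T\bA + \bA^T\bQ)$ in the cross term. Minimizing the strictly convex quadratic in $\bS$ on the subspace of symmetric matrices yields $\bS^\star = -\bM$, and hence $\dist(\b0,\partial g(\bQ)) = \|\bA - \bQ\bM\|_F$. A direct computation using $\bQ^T\bQ = \bI_K$ then shows
\[
\Bigl(\bI_d - \tfrac{1}{2}\bQ\bQ^T\Bigr) R(\bQ) = \bA - \bQ\bA^T\bQ - \tfrac{1}{2}\bQ\bQ^T\bA + \tfrac{1}{2}\bQ\bA^T\bQ = \bA - \bQ\bM,
\]
which establishes identity~\eqref{rst-1:prop-subd-G}.

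For the two-sided estimate, I would observe that $\bQ\bQ^T$ is an orthogonal projector of rank $K$, so the symmetric matrix $\bI_d - \tfrac{1}{2}\bQ\bQ^T$ has eigenvalues in $\{1, 1/2\}$. Its largest and smallest singular values are $1$ and $1/2$ respectively, so the inequalities $\tfrac{1}{2}\|R(\bQ)\|_F \le \|(\bI_d - \tfrac{1}{2}\bQ\bQ^T) R(\bQ)\|_F \le \|R(\bQ)\|_F$ follow at once from standard singular value bounds applied column-by-column. Combined with~\eqref{rst-1:prop-subd-G}, this yields the sandwich inequality. Finally, for the characterization~\eqref{rst-2:prop-subd-G}, note that $\partial g(\bQ)$ is closed, so $\bQ \in \mQ$ is equivalent to $\dist(\b0,\partial g(\bQ)) = 0$; by the sandwich inequality this is in turn equivalent to $R(\bQ) = \b0$.

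I do not foresee a genuine obstacle here; the argument is essentially a careful bookkeeping exercise. The only step requiring some care will be the minimization over the normal cone, where one must be sure to enforce symmetry of $\bS$ when taking the first-order condition (otherwise one would erroneously conclude $\bS^\star = -\bQ^T\bA$, which need not be symmetric). Recognizing that the correct optimal $\bS^\star$ is the symmetrization $-\bM$ is the key calculation that makes the identity with $(\bI_d - \tfrac{1}{2}\bQ\bQ^T)R(\bQ)$ fall out.
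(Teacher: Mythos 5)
Your proposal is correct and follows essentially the same route as the paper: both reduce $\dist(\b0,\partial g(\bQ))$ to the norm of the tangential component of $\bA$ at $\bQ$, identify it with $\bigl\|\bigl(\bI_d-\tfrac{1}{2}\bQ\bQ^T\bigr)R(\bQ)\bigr\|_F$, and get the two-sided bound from the spectrum of $\bI_d-\tfrac{1}{2}\bQ\bQ^T$ (eigenvalues $1$ and $1/2$), with criticality following since the normal cone is closed. The only difference is cosmetic: you re-derive the tangential projection formula by explicitly minimizing $\|\bA+\bQ\bS\|_F^2$ over the normal space $\{\bQ\bS:\bS\in\S^K\}$ (correctly symmetrizing $\bQ^T\bA$ in the first-order condition), whereas the paper cites this projection formula directly from Absil et al.
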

\begin{proof}
By the result in \cite[Exercise 8.8]{RW04}, we have $\partial g(\bQ) = \bA + \mN_{{\rm St}(d,K)}(\bQ)$ for any $\bQ \in {\rm St}(d,K)$. Since the differential of the map $\R^{d \times K} \ni \bX \mapsto \bX^T\bX - \bI_K \in \mathbb{S}^K$ has full rank (see the discussion in~\cite[Chapter 3.3.2]{absil2009optimization}), we can invoke the result in~\cite[Example 6.8]{RW04} to obtain $\mN_{{\rm St}(d,K)}(\bQ) = \mT_{{\rm St}(d,K)}(\bQ)^\perp$, where $\mT_{{\rm St}(d,K)}(\bQ)$
is the tangent space to ${\rm St}(d,K)$ at $\bQ \in {\rm St}(d,K)$. In particular, using the decomposition $\bA = \Proj_{\mT_{{\rm St}(d,K)}(\bQ)}(\bA) + \Proj_{\mN_{{\rm St}(d,K)}(\bQ)}(\bA)$ (see~\cite[Chapter 3.6.1]{absil2009optimization}) and the formula $\Proj_{\mT_{{\rm St}(d,K)}(\bQ)}(\bA)=\left(\bI_d-\bQ\bQ^T/2\right)R(\bQ)$ (see~\cite[Example 3.6.2]{absil2009optimization}), we have
\begin{align*}
\dist(\b0,\partial g(\bQ)) & = \inf_{\bS \in \mN_{{\rm St}(d,K)}(\bQ)} \|\bA + \bS\|_F  = \|\Proj_{\mT_{{\rm St}(d,K)}(\bQ)}(\bA)\|_F \\
& = \left\| \left(\bI_d-\frac{1}{2}\bQ\bQ^T\right)R(\bQ) \right\|_F.
\end{align*}
Now, observe that $\bI_d-\bQ\bQ^T/2$ is invertible and the eigenvalues of $\bQ\bQ^T$ are $0$ or $1$. It follows that
\begin{align*}
\|R(\bQ)\|_F &= \left\| \left(\bI_d-\frac{1}{2}\bQ\bQ^T\right)^{-1} \left(\bI_d-\frac{1}{2}\bQ\bQ^T\right)R(\bQ) \right\|_F  \\
& \le\ 2\left\| \left(\bI_d-\frac{1}{2}\bQ\bQ^T\right)R(\bQ) \right\|_F
\end{align*}
and
\begin{align*}
\left\| \left(\bI_d-\frac{1}{2}\bQ\bQ^T\right)R(\bQ) \right\|_F \le \|R(\bQ)\|_F.
\end{align*}
Putting the above pieces together, we obtain
\begin{align*}
\frac{1}{2}\|R(\bQ)\|_F \le \dist(\b0,\partial g(\bQ)) \le \|R(\bQ)\|_F,
\end{align*}
as desired.
\end{proof}

Now, suppose that the rank of $\bA \in \R^{d\times K}$ is $r$, where $r \in \{1,\ldots, K\}$ so that $\bA \not= \b0$ (if $\bA=\b0$, then Theorem \ref{thm:KL-LO-OC} holds trivially). Let
\be \label{eq:A-svd}
\bA=\bU_{\bA}\bSigma_{\bA}\bV_{\bA}^T=
\begin{bmatrix}
\bU_{\bA,1} & \bU_{\bA,2}
\end{bmatrix}
\begin{bmatrix}
\widetilde{\bSigma}_{\bA} & \b0 \\ 
\b0 & \b0 
\end{bmatrix} 
\begin{bmatrix}
\bV_{\bA,1}^T \\
\bV_{\bA,2}^T 
\end{bmatrix}
\ee
be an SVD of $\bA$, where $\widetilde{\bSigma}_{\bA} = \diag(\sigma_1,\ldots,\sigma_r)$ with $\sigma_1 \ge \cdots \ge \sigma_r > 0 $ being the positive singular values of $\bA$; $\bU_{\bA} \in \mO^{d}$ with $\bU_{\bA,1} \in \R^{d\times r}$, $\bU_{\bA,2} \in \R^{d\times (d-r)}$; $\bV_{\bA} \in \mO^K$ with $\bV_{\bA,1} \in \R^{K\times r}$, $\bV_{\bA,2} \in \R^{K \times (K-r)}$. Then, for any $\bQ \in {\rm St}(d,K)$, we have $\langle \bA, \bQ \rangle = \langle \bSigma_{\bA}, \bU_{\bA}^T \bQ \bV_{\bA} \rangle$ with $\bU_{\bA}^T \bQ \bV_{\bA} \in {\rm St}(d,K)$. Moreover, Proposition \ref{prop:subd-G} implies that $\bar{\bQ} \in {\rm St}(d,K)$ is a limiting critical point of the function $\bQ \mapsto \langle \bA,\bQ \rangle + \delta_{{\rm St}(d,K)}(\bQ)$ if and only if $\bU_{\bA}^T \bar{\bQ} \bV_{\bA} \in {\rm St}(d,K)$ is a limiting critical point of the function $\bQ \mapsto \langle \bSigma_{\bA}, \bQ \rangle + \delta_{{\rm St}(d,K)}(\bQ)$. Thus, we can assume without loss of generality that 
\be\label{block-A}
\bA = 
\begin{bmatrix}
\widetilde{\bA} & \b0\\ 
\b0 & \b0
\end{bmatrix},
\ee
where $\widetilde{\bA}=\diag(a_1,\dots,a_r)$ with $a_1 \ge \dots \ge a_r >0$. Suppose that $\bA$ has $p \ge1$ distinct positive singular values. In other words, there exist indices $s_0,s_1,\ldots,s_p$ such that $0=s_0 < s_1 < \cdots < s_p = r$ and 
\be\label{rela-a}
a_{s_0+1}=\cdots = a_{s_1} > a_{s_1+1} = \cdots = a_{s_2} > \cdots >  a_{s_{p-1}+1} = \cdots = a_{s_p}.
\ee
Let $h_i=s_i-s_{i-1}$ be the multiplicity of the $i$-th largest positive singular value, where $i=1,\dots,p$. Then, we clearly have $\sum_{i=1}^p h_i=r$ and
\be \label{block-tA}
\widetilde{\bA} = \bldg(a_{s_1}\bI_{h_1},\ldots,a_{s_p}\bI_{h_p}).
\ee
Based on the block structures of $\bA$ in \eqref{block-A} and $\widetilde{\bA}$ in \eqref{block-tA}, let us partition $\bQ \in \R^{d\times K}$ as
\be \label{block-Q}
\bQ = \begin{bmatrix}
\bQ_1 & \bQ_2 \\
\bQ_3 & \bQ_4 
\end{bmatrix} \quad\mbox{and}\quad
\bQ_1 = \begin{bmatrix}
\bQ_{h_1 h_1} & \dots & \bQ_{h_1h_p}\\
\vdots & \ddots & \vdots \\
\bQ_{h_p h_1} & \dots & \bQ_{h_p h_p}
\end{bmatrix},
\ee
where $\bQ_1\in \R^{r\times r}$, $\bQ_2\in \R^{r\times (K-r)}$, $\bQ_3\in \R^{(d-r)\times r}$, $\bQ_4\in \R^{(d-r)\times (K-r)}$, and $\bQ_{h_ih_j}\in \R^{h_i\times h_j}$ for $i,j\in\{1,\ldots,p\}$. With the above partition, we can further elucidate the structure of a limiting critical point of Problem \eqref{LP-OC}. Specifically, we establish the following result: 

\begin{prop}\label{prop:crit-LP-OC}
Suppose that $\bA \in \R^{d \times K}$ has the form given in~\eqref{block-A}. Then, we have $\bQ \in \mQ$ if and only if 
\begin{align}\label{result:prop-crit-LP-OC}
\bQ = \begin{bmatrix}
\bU \diag(\bq)\bU^T & \b0 \\
\b0 & \bV
\end{bmatrix},
\end{align}
where $\bU = \bldg(\bU_{1},\dots,\bU_{p})$ with $\bU_{i} \in \mO^{h_i}$ for $i=1,\dots,p$, $\bq \in \{\pm 1\}^{r}$, and $\bV \in {\rm St}(d-r,K-r)$.
\end{prop}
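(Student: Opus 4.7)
The plan is to use Proposition~\ref{prop:subd-G} to reduce the characterization of $\mQ$ to the purely algebraic condition $R(\bQ)=\bA-\bQ\bA^T\bQ=\b0$ together with $\bQ\in{\rm St}(d,K)$. Since $\bA$ has the block form~\eqref{block-A} and is symmetric (because $\widetilde{\bA}$ is diagonal), substituting the partition~\eqref{block-Q} into $\bA=\bQ\bA\bQ$ produces four block equations:
\[
\bQ_1\widetilde{\bA}\bQ_1=\widetilde{\bA},\quad \bQ_1\widetilde{\bA}\bQ_2=\b0,\quad \bQ_3\widetilde{\bA}\bQ_1=\b0,\quad \bQ_3\widetilde{\bA}\bQ_2=\b0.
\]
The first equation forces $\bQ_1$ to be invertible: if $\bQ_1\bx=\b0$, then $\widetilde{\bA}\bx=\bQ_1\widetilde{\bA}\bQ_1\bx=\b0$, and since $\widetilde{\bA}$ is invertible, $\bx=\b0$. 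Together with the invertibility of $\widetilde{\bA}$, the remaining three equations then yield $\bQ_2=\b0$ and $\bQ_3=\b0$. The Stiefel condition $\bQ^T\bQ=\bI_K$ immediately gives $\bQ_1\in\mO^r$ and $\bQ_4\in{\rm St}(d-r,K-r)$, so I would set $\bV=\bQ_4$.

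The main step is to pin down the structure of $\bQ_1$. Right-multiplying $\bQ_1\widetilde{\bA}\bQ_1=\widetilde{\bA}$ by $\bQ_1^T$ gives $\bQ_1\widetilde{\bA}=\widetilde{\bA}\bQ_1^T$, and left-multiplying by $\bQ_1^T$ gives $\widetilde{\bA}\bQ_1=\bQ_1^T\widetilde{\bA}$. Partitioning $\bQ_1=[\bQ_{h_i h_j}]_{i,j=1}^p$ conformably with the multiplicity decomposition~\eqref{block-tA} of $\widetilde{\bA}$, the $(i,j)$ block of the two identities reads
\[
a_{s_j}\bQ_{h_i h_j}=a_{s_i}\bQ_{h_j h_i}^T \quad\text{and}\quad a_{s_i}\bQ_{h_i h_j}=a_{s_j}\bQ_{h_j h_i}^T.
\]
Eliminating $\bQ_{h_j h_i}^T$ between the two equations produces $(a_{s_j}^2-a_{s_i}^2)\bQ_{h_i h_j}=\b0$. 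By~\eqref{rela-a}, $a_{s_i}\neq a_{s_j}$ whenever $i\neq j$, so $\bQ_{h_i h_j}=\b0$ for all off-diagonal blocks, making $\bQ_1$ block-diagonal. On the diagonal blocks, the same relations reduce to $\bQ_{h_i h_i}=\bQ_{h_i h_i}^T$, so each block is symmetric, and the orthogonality of $\bQ_1$ forces $\bQ_{h_i h_i}\in\mO^{h_i}$; hence each $\bQ_{h_i h_i}$ is an involution with $\pm 1$ eigenvalues. The spectral theorem then yields $\bQ_{h_i h_i}=\bU_i\diag(\bq_i)\bU_i^T$ with $\bU_i\in\mO^{h_i}$ and $\bq_i\in\{\pm 1\}^{h_i}$, and assembling $\bU=\bldg(\bU_1,\ldots,\bU_p)$ and $\bq=(\bq_1^T,\ldots,\bq_p^T)^T$ delivers the asserted form~\eqref{result:prop-crit-LP-OC}.

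For the converse, I would verify by direct computation that any $\bQ$ of the form~\eqref{result:prop-crit-LP-OC} satisfies $\bA=\bQ\bA\bQ$: since $\bU$ is block-diagonal with the same block sizes as $\widetilde{\bA}$ and $\widetilde{\bA}$ is a scalar on each block, $\bU$ commutes with $\widetilde{\bA}$, whence $\bQ_1\widetilde{\bA}\bQ_1=\bU\diag(\bq)\widetilde{\bA}\diag(\bq)\bU^T=\widetilde{\bA}$ using $q_i^2=1$; the remaining block equations and the Stiefel constraint are immediate. The main obstacle is the elimination step for the off-diagonal blocks: a single symmetry relation is insufficient when multiplicities are present, and it is essential to exploit both $\bQ_1\widetilde{\bA}=\widetilde{\bA}\bQ_1^T$ and $\widetilde{\bA}\bQ_1=\bQ_1^T\widetilde{\bA}$ simultaneously to produce the factor $a_{s_j}^2-a_{s_i}^2$ that separates distinct-multiplicity blocks.
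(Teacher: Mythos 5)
Your proof is correct, and for the two substantive steps it takes a genuinely different (and in fact more streamlined) route than the paper. For killing $\bQ_2$ and $\bQ_3$, the paper first derives the auxiliary identities $\bQ^T\bA-\bA^T\bQ=\b0$ and $\bA-\bQ\bQ^T\bA=\b0$ from $R(\bQ)=\b0$ and reads off $\bQ_2=\b0$, $\bQ_1\in\mO^r$, $\bQ_3=\b0$ from those; you instead work directly with the four block equations of $\bA=\bQ\bA^T\bQ$, observe that $\bQ_1\tbA\bQ_1=\tbA$ forces $\bQ_1$ to be invertible, and then cancel $\bQ_1\tbA$ (resp.\ $\tbA\bQ_1$) to get $\bQ_2=\b0$ and $\bQ_3=\b0$, with $\bQ_1\in\mO^r$ and $\bQ_4\in{\rm St}(d-r,K-r)$ following from the Stiefel constraint. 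For the block-diagonality of $\bQ_1$, the paper uses only the single symmetry relation $\bQ_1^T\tbA=\tbA^T\bQ_1$ and then exploits orthogonality of $\bQ_1$ through Frobenius-norm sum identities (its \eqref{eq-6:lem-crit-LP-OC}--\eqref{eq-7:lem-crit-LP-OC}) to conclude $\bQ_{h_ih_j}=\b0$ for $i\neq j$; you instead extract \emph{both} relations $\bQ_1\tbA=\tbA\bQ_1^T$ and $\tbA\bQ_1=\bQ_1^T\tbA$ from $\bQ_1\tbA\bQ_1=\tbA$ and orthogonality, and eliminate blockwise to get $(a_{s_i}^2-a_{s_j}^2)\bQ_{h_ih_j}=\b0$, which by \eqref{rela-a} kills the off-diagonal blocks directly and pointwise rather than via a norm-counting argument. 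Both arguments are sound; yours is more elementary and shorter, while the paper's norm identities are of the same flavor as the quantitative estimates it reuses later for the error bound (Lemma~\ref{prop:bound-R(Q)}), which is presumably why it is organized that way. One cosmetic slip: $\bA$ is $d\times K$, so ``$\bA$ is symmetric'' and ``$\bA=\bQ\bA\bQ$'' are ill-typed when $d\neq K$; the correct identity is $\bA=\bQ\bA^T\bQ$ with $\tbA^T=\tbA$, and your four block equations are exactly what this yields, so nothing in the argument is affected.
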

\begin{proof}
If $\bQ$ is of the form given in~\eqref{result:prop-crit-LP-OC}, then using the block structure of $\bA$ in~\eqref{block-A}, it is straightforward to verify that $\bQ \in {\rm St}(d,K)$ and $\bA-\bQ\bA^T\bQ=\b0$. By Proposition \ref{prop:subd-G}, we conclude that $\bQ \in \mQ$.

Conversely, suppose that $\bQ \in \mQ$. By Proposition~\ref{prop:subd-G}, we have $\bA-\bQ\bA^T\bQ=\b0$. Since $\bQ \in {\rm St}(d,K)$, this implies that 
\be\label{eq-2:lem-crit-LP-OC}
\bQ^T\bA-\bA^T\bQ = \b0,
\ee
which, together with $\bA-\bQ\bA^T\bQ=\b0$, yields
\be\label{eq-3:lem-crit-LP-OC}
\bA-\bQ\bQ^T\bA = \b0. 
\ee
Using the block structures of $\bA$ in \eqref{block-A} and $\bQ$ in \eqref{block-Q}, we have
\begin{align*}
\bQ^T\bA = \begin{bmatrix}
\bQ_1^T\widetilde{\bA} & \b0 \\
\bQ_2^T\widetilde{\bA} & \b0 
\end{bmatrix}.
\end{align*}
It then follows from~\eqref{eq-2:lem-crit-LP-OC} that $\bQ_1^T\widetilde{\bA} = \widetilde{\bA}^T\bQ_1$ and $\bQ_2^T\widetilde{\bA}=\b0$. Since $\widetilde{\bA}$ has full rank, the latter implies that $\bQ_2=\b0$, which in turn implies that $\bQ_4^T\bQ_4=\bI_{K-r}$ because we have $\bQ \in {\rm St}(d,K)$. Using $\bQ_2=\b0$ and~\eqref{eq-3:lem-crit-LP-OC}, we obtain
\[
\bA-\bQ\bQ^T\bA = \begin{bmatrix}
\widetilde{\bA} - \bQ_1\bQ_1^T\widetilde{\bA} & \b0 \\
-\bQ_3\bQ_1^T\widetilde{\bA} & \b0 
\end{bmatrix} = \b0;
\]
i.e., $(\bI_r-\bQ_1\bQ_1^T)\tbA=\b0$ and $\bQ_3\bQ_1^T\tbA=\b0$. These, together with the fact that $\tbA$ has full rank, imply that $\bQ_1\in \mO^r$ and $\bQ_3=\b0$. 

Now, using the block structures of $\tbA$ in \eqref{block-tA} and $\bQ_1$ in \eqref{block-Q}, we get
\be \label{eq:sym-diff}
\bQ_1^T\tbA = \begin{bmatrix}
a_{s_1} \bQ_{h_1 h_1}^T & \dots & a_{s_p} \bQ_{h_p h_1}^T \\
\vdots & \ddots & \vdots \\
a_{s_1} \bQ_{h_1 h_p}^T & \dots & a_{s_p} \bQ_{h_p h_p}^T
\end{bmatrix}, \quad
\tbA^T\bQ_1 = \begin{bmatrix}
a_{s_1} \bQ_{h_1 h_1} & \dots & a_{s_1} \bQ_{h_1 h_p} \\
\vdots & \ddots & \vdots \\
a_{s_p} \bQ_{h_p h_1} & \dots & a_{s_p} \bQ_{h_p h_p}
\end{bmatrix}.
\ee
Since $\bQ_1^T\tbA=\tbA^T\bQ_1$, we have
\be\label{eq-4:lem-crit-LP-OC}
\bQ_{h_ih_j}^T = \frac{a_{s_j}}{a_{s_i}}\bQ_{h_jh_i} \quad \mbox{for } i,j \in \{1,\ldots,p\},
\ee
which implies that
\be \label{eq-11:lem-crit-LP-OC}
\sum_{i=1}^p \bQ_{h_ih_j}^T\bQ_{h_ih_j} = \sum_{i=1}^p \frac{a_{s_j}^2}{a_{s_i}^2}\bQ_{h_jh_i}\bQ_{h_jh_i}^T \quad \mbox{for } j \in \{1,\ldots,p\}.
\ee
Moreover, the fact that $\bQ_1 \in \mO^r$ implies
\be \label{eq-5:lem-crit-LP-OC}
\sum_{i=1}^p \bQ_{h_ih_j}^T\bQ_{h_ih_j} = \bI_{h_j},  \quad \sum_{i=1}^p \bQ_{h_jh_i}\bQ_{h_jh_i}^T = \bI_{h_j} \quad \mbox{for } j \in \{1,\ldots,p\}.
\ee
It then follows from~\eqref{eq-11:lem-crit-LP-OC} and \eqref{eq-5:lem-crit-LP-OC} that
\be \label{eq-6:lem-crit-LP-OC}
\sum_{i\neq j}^p \left( 1 - \frac{a_{s_j}^2}{a_{s_i}^2} \right)\left\|\bQ_{h_jh_i}\right\|_F^2 = 0 \quad \text{for } j \in \{1,\dots,p\}.
\ee
By rewriting~\eqref{eq-4:lem-crit-LP-OC} as $\bQ_{h_jh_i}^T = \tfrac{a_{s_i}}{a_{s_j}}\bQ_{h_ih_j}$ for $i,j \in \{1,\ldots,p\}$ and repeating the above argument, we get
\be \label{eq-7:lem-crit-LP-OC}
\sum_{i\neq j}^p \left( 1 - \frac{a_{s_i}^2}{a_{s_j}^2} \right)\left\|\bQ_{h_ih_j}\right\|_F^2 = 0 \quad \text{for } j \in \{1,\dots,p\}.
\ee
Since $a_{s_1} > \dots > a_{s_p} > 0$ by \eqref{rela-a}, the identities in~\eqref{eq-6:lem-crit-LP-OC} and \eqref{eq-7:lem-crit-LP-OC} imply that
\[ 
\bQ_{h_ih_j}=\b0 \quad \text{for } i,j \in \{1,\ldots,p\}; \, i\not= j.
\] 
This, together with \eqref{eq-4:lem-crit-LP-OC} and \eqref{eq-5:lem-crit-LP-OC}, yields
\be \label{eq-9:lem-crit-LP-OC}
\bQ_{h_ih_i}=\bQ_{h_ih_i}^T \quad\mbox{and}\quad \bQ_{h_ih_i}^T\bQ_{h_ih_i}=\bI_{h_i} \quad \mbox{for } i \in \{1,\ldots,p\}.
\ee
Let $\bQ_{h_i h_i}=\bU_{i}\bm{\Lambda}_{i}\bU_{i}^T$ ($i=1,\ldots,p$) be an eigen-decomposition of $\bQ_{h_i h_i}$, where $\bU_{i}\in \mO^{h_i}$ and $\bm{\Lambda}_{i} = \diag(\lambda_{s_{i-1}+1},\dots,\lambda_{s_i})$. Then, we have $\bQ_{h_i h_i}^T\bQ_{h_ih_i}=\bU_{i}\bm{\Lambda}^2_{i}\bU_{i}^T=\bI_{h_i}$ from~\eqref{eq-9:lem-crit-LP-OC}, which implies that $\bm{\Lambda}^2_{i}=\bI_{h_i}$. It follows that $\lambda_{s_{i-1}+1},\ldots,\lambda_{s_i} \in \{\pm1\}$. 

Putting all the pieces together, we see that $\bQ_1$ takes the form
\[ 
\bQ_1=\bldg(\bU_{1},\ldots,\bU_{p})\cdot \diag(\bq) \cdot\bldg(\bU_{1}^T,\ldots,\bU_{p}^T)
\] 
with $\bU_{i} \in \mO^{h_i}$ for $i=1,\ldots,p$ and $\bq \in\{\pm 1\}^r$, $\bQ_2=\b0$, $\bQ_3=\b0$, and $\bQ_4 \in {\rm St}(d-r,K-r)$. This completes the proof.
\end{proof}

Proposition \ref{prop:crit-LP-OC} suggests that when $\bA \in \R^{d \times K}$ has the form given in~\eqref{block-A}, the set $\mQ$ of limiting critical points of Problem~\eqref{LP-OC} can be expressed as
\[ \mQ = \bigcup_{\bq \in \{\pm 1\}^r} \mQ_{\bq}, \]
where
\be \label{set:Q-p}
\mQ_{\bq} = \left\{
\begin{bmatrix}
\bU \diag(\bq)\bU^T & \b0 \\
\b0 & \bV
\end{bmatrix} :
\begin{array}{l}
\bU = \bldg(\bU_{1},\dots,\bU_{p}), \\ 
\noalign{\smallskip}
\bU_i\in \mO^{h_i} \mbox{ for } i \in \{1,\ldots,p\}, \\
\noalign{\smallskip}
\bV \in {\rm St}(d-r,K-r)
\end{array}
\right\}.
\ee
The following result shows that the collection $\{ \mQ_{\bq} \}_{\bq \in \{\pm1\}^r}$ essentially forms a well-separated partition of the set $\mQ$.
\begin{prop}\label{prop:dist-Q}
Suppose that $\bA \in \R^{d \times K}$ has the form given in~\eqref{block-A}. Let $\bq,\bq^\prime \in \{\pm 1\}^r$ be arbitrary. Then, we either have $\mQ_{\bq}=\mQ_{\bq^\prime}$ or $\mQ_{\bq}\cap \mQ_{\bq^\prime} = \emptyset$. Moreover, if the latter holds, then $\dist(\mQ_{\bq},\mQ_{\bq^\prime}) \ge 2$.
\end{prop}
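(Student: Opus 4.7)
The plan is to reduce the problem to tracking a simple multiset invariant of the block structure and then to invoke the Hoffman--Wielandt inequality. For any $\bq \in \{\pm 1\}^r$ and any $i \in \{1,\dots,p\}$, let $n_i(\bq)$ denote the number of entries equal to $+1$ among $q_{s_{i-1}+1},\dots,q_{s_i}$. The starting observation is that, by the block-diagonal form of $\bU$ in~\eqref{set:Q-p}, the matrix $\bU\diag(\bq)\bU^T$ is itself block diagonal with $i$-th diagonal block $\bU_i\diag(q_{s_{i-1}+1},\dots,q_{s_i})\bU_i^T$, which is symmetric with eigenvalue multiset $\{q_{s_{i-1}+1},\dots,q_{s_i}\}$. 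In particular, this block always has exactly $n_i(\bq)$ eigenvalues equal to $+1$ and $h_i-n_i(\bq)$ equal to $-1$, independently of the choice of $\bU_i\in\mO^{h_i}$.

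Next, I would establish the dichotomy by showing that $\mQ_{\bq}=\mQ_{\bq^\prime}$ if $n_i(\bq)=n_i(\bq^\prime)$ for every $i$, and $\mQ_{\bq}\cap\mQ_{\bq^\prime}=\emptyset$ otherwise. For the first implication, if $n_i(\bq)=n_i(\bq^\prime)$ for each $i$, then there exists a permutation matrix $\bm{\Pi}_i\in\mO^{h_i}$ such that $\bm{\Pi}_i\diag(q_{s_{i-1}+1},\dots,q_{s_i})\bm{\Pi}_i^T=\diag(q^\prime_{s_{i-1}+1},\dots,q^\prime_{s_i})$; assembling $\bm{\Pi}=\bldg(\bm{\Pi}_1,\dots,\bm{\Pi}_p)$ and setting $\bU^\prime=\bU\bm{\Pi}^T$, which retains the required block-diagonal orthogonal structure, realizes any element of $\mQ_{\bq}$ as an element of $\mQ_{\bq^\prime}$, and the reverse inclusion is symmetric. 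For the contrapositive of the second claim, if $\mQ_{\bq}\cap\mQ_{\bq^\prime}\neq\emptyset$, then by the eigenvalue observation above, any common element has $i$-th diagonal sub-block whose eigenvalue multiset is simultaneously $\{q_{s_{i-1}+1},\dots,q_{s_i}\}$ and $\{q^\prime_{s_{i-1}+1},\dots,q^\prime_{s_i}\}$, forcing $n_i(\bq)=n_i(\bq^\prime)$ for all $i$, which by the first implication yields $\mQ_{\bq}=\mQ_{\bq^\prime}$.

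For the distance bound, assume $\mQ_{\bq}\cap\mQ_{\bq^\prime}=\emptyset$, so there is an index $i^*$ with $n_{i^*}(\bq)\neq n_{i^*}(\bq^\prime)$. Take arbitrary $\bQ\in\mQ_{\bq}$ and $\bQ^\prime\in\mQ_{\bq^\prime}$, and let $\bQ_{1,i^*}$ and $\bQ_{1,i^*}^\prime$ denote their $i^*$-th diagonal sub-blocks inside the $\bQ_1$ block. Since $\bQ-\bQ^\prime$ is itself block diagonal, $\|\bQ-\bQ^\prime\|_F^2\ge\|\bQ_{1,i^*}-\bQ_{1,i^*}^\prime\|_F^2$. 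By the Hoffman--Wielandt inequality applied to the symmetric matrices $\bQ_{1,i^*}$ and $\bQ_{1,i^*}^\prime$, whose sorted eigenvalues are $n_{i^*}(\bq)$ copies of $+1$ followed by $h_{i^*}-n_{i^*}(\bq)$ copies of $-1$ and analogously for $\bq^\prime$, we get $\|\bQ_{1,i^*}-\bQ_{1,i^*}^\prime\|_F^2 \ge 4\,|n_{i^*}(\bq)-n_{i^*}(\bq^\prime)|\ge 4$. Taking square roots and then the infimum over $\bQ$ and $\bQ^\prime$ yields $\dist(\mQ_{\bq},\mQ_{\bq^\prime})\ge 2$. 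The only substantive ingredient is Hoffman--Wielandt; everything else is bookkeeping on multisets of $\pm 1$ eigenvalues, and I do not foresee a genuine obstacle.
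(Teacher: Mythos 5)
Your proof is correct and follows essentially the same route as the paper: both arguments reduce the dichotomy to comparing the $\pm1$ eigenvalue multisets of the diagonal sub-blocks of $\bQ_1$, and both obtain the distance bound by locating a block where the number of $+1$ eigenvalues differs and invoking a classical eigenvalue-perturbation inequality. The only real difference is cosmetic: the paper passes from the Frobenius norm to the spectral norm and applies a Weyl-type bound to a single eigenvalue, whereas you apply Hoffman--Wielandt directly to the two symmetric sub-blocks, which handles both orthogonal conjugations at once and yields the slightly sharper intermediate bound $4\,|n_{i^*}(\bq)-n_{i^*}(\bq^\prime)|$.
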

\begin{proof}
Let $\bq = (\bq_1,\ldots,\bq_p)$ and $\bq^\prime = (\bq_1^\prime,\ldots,\bq_p^\prime)$, where $\bq_i,\bq_i^\prime \in \{\pm1\}^{h_i}$ for $i=1,\ldots,p$. Suppose that $\bQ =\begin{bmatrix}
\bQ_1 & \b0 \\
\b0 & \bV
\end{bmatrix} \in \mQ_{\bq}$. By definition of $\mQ_{\bq}$ in \eqref{set:Q-p}, for $i=1,\ldots,p$, the eigenvalues of the $i$-th diagonal block of $\bQ_1$ are given by the entries of $\bq_i$. Thus, if $\bQ \in \mQ_{\bq} \cap \mQ_{\bq^\prime}$, then both $\bq_i$ and $\bq_i^\prime$ are vectors of eigenvalues of the $i$-th diagonal block of $\bQ_1$, which implies that $\bq_i$ and $\bq_i^\prime$ are equal up to a permutation for $i=1,\ldots,p$. It follows that whenever $\mQ_{\bq}\cap \mQ_{\bq^\prime} \not= \emptyset$, we have $\mQ_{\bq}=\mQ_{\bq^\prime}$. 

Now, suppose that $\mQ_{\bq}\cap \mQ_{\bq^\prime}=\emptyset$. Let 
\[
\bQ = \begin{bmatrix}
\bU\diag(\bq)\bU^T & \b0 \\
\b0 & \bV
\end{bmatrix} \in \mQ_{\bq}, \quad 
\bQ^\prime = \begin{bmatrix}
\bU^\prime \diag(\bq^\prime) {\bU^\prime}^T & \b0 \\
\b0 & \bV^\prime
\end{bmatrix} \in \mQ_{\bq^\prime}
\]
be arbitrary, where $\bU = \bldg(\bU_1,\dots,\bU_p)$, $\bU^\prime = \bldg(\bU_1^\prime,\dots,\bU_p^\prime)$ with $\bU_i, \bU_i^\prime \in \mO^{h_i}$ for $i=1,\ldots,p$ and $\bV,\bV^\prime \in {\rm St}(d-r,K-r)$. Then, we have
\begin{align} \label{eq-1:prop-dist-Q}
\|\bQ-\bQ^\prime\|_F^2 & = \sum_{i=1}^p \left\|\bU_i\diag(\bq_i)\bU_i^T - \bU_i^\prime \diag(\bq_i^\prime) {\bU_i^\prime}^T \right\|_F^2 + \|\bV-\bV^\prime\|_F^2 \notag \\
& \ge \sum_{i=1}^p \min_{\bU_i\in \mO^{h_i}}\left\|\bU_i\diag(\bq_i) \bU_i^T -\diag(\bq_i^\prime) \right\|_F^2 + \min_{\bV \in {\rm St}(d-r,K-r)} \|\bV-\bV^\prime\|_F^2  \notag \\ 
& = \sum_{i=1}^p \min_{\bU_i\in \mO^{h_i}}\left\|\bU_i\diag(\bq_i) \bU_i^T -\diag(\bq_i^\prime) \right\|_F^2,
\end{align}
where the last equality follows from the fact that $\bV^\prime \in {\rm St}(d-r,K-r)$. For $i=1,\ldots,p$, let $t_i$ and $t_i^\prime$ denote the number of 1's in $\bq_i$ and $\bq_i^\prime$, respectively. If there exists a $j \in \{1,\dots,p\}$ such that $t_j\neq t_j^\prime$, then we can find a $k \in \{1,\dots,h_j\}$ such that for any $\bU_j \in \mO^{h_j}$,
\begin{align}
\left\|\bU_j\diag(\bq_j) \bU_j^T -\diag(\bq_j^\prime) \right\|_F^2 & \ge \left\|\bU_j\diag(\bq_j) \bU_j^T -\diag(\bq_j^\prime) \right\|^2 \nonumber \\ 
& \ge \left| \lambda_k \left( \bU_j\diag(\bq_j) \bU_j^T \right) - \lambda_k(\diag(\bq_j^\prime)) \right|^2 \nonumber \\
& = 4, \label{eq:distge4}
\end{align}
where the second inequality follows from classic perturbation results for eigenvalues of symmetric matrices (see, e.g.,~\cite[Corollary 4.10]{SS90}) and the last equality is due to the fact that $\bq_j,\bq_j^\prime \in \{\pm1\}^{h_j}$ and $t_j \not= t_j^\prime$. Since $\bQ \in \mQ_{\bq}$, $\bQ^\prime \in \mQ_{\bq^\prime}$ are arbitrary, we conclude from~\eqref{eq-1:prop-dist-Q} and \eqref{eq:distge4} that
\[
\dist(\mQ_{\bq},\mQ_{\bq^\prime}) \ge 2.
\]
Otherwise, we have $t_i=t_i^\prime$ for $i=1,\ldots,p$, which implies that $\bq_i$ and $\bq_i^\prime$ are equal up to a permutation for $i=1,\ldots,p$. In this case, we have $\mQ_{\bq}=\mQ_{\bq^\prime}$, which contradicts our assumption that $\mQ_{\bq} \cap \mQ_{\bq^\prime} = \emptyset$. This completes the proof.
\end{proof}

\subsubsection{Local Error Bound}
Equipped with the results in the previous section, our next task is to establish the following local error bound for Problem~\eqref{LP-OC}, which provides an estimate of the distance between any point from a certain subset of ${\rm St}(d,K)$ to the set of limiting critical points of Problem~\eqref{LP-OC} using the map $R$ introduced in~\eqref{R(Q)}. As we shall see, such an error bound plays a crucial role in determining the K\L\ exponent at the limiting critical points of Problem~\eqref{LP-OC}.
\begin{thm}\label{lem:EB}
Let $\bA \in \R^{d \times K}$ be an arbitrary rank-$r$ matrix whose SVD is given by~\eqref{eq:A-svd} and whose positive singular values are given by~\eqref{rela-a}. Then, for any $\bq\in\{\pm 1\}^r$, we have
\be\label{EB:LP-OC}
\dist(\bQ, \bar{\mQ}_{\bq}) \le \kappa\|R(\bQ)\|_F \quad \mbox{for all } \bQ\in{\rm St}(d,K) \mbox{ with } \dist(\bQ, \bar{\mQ}_{\bq}) < 1,
\ee
where $\bar{\mQ}_{\bq} = \bU_{\bA} \mQ_{\bq} \bV_{\bA}^T = \{ \bU_{\bA} \bm{W} \bV_{\bA}^T : \bm{W} \in \mQ_{\bq} \}$ and
\begin{align*}
\kappa &= \frac{1}{a_r} \left( 13 + 6(6p-5)  \left( \min_{i,j\in\{1,\ldots,p\} \atop i\not=j} \delta_{ij}^2 \right)^{-1} \right)^{1/2}, \\
\delta_{ij} &= \frac{a_{s_i}}{a_{s_j}} - \frac{a_{s_j}}{a_{s_i}} \,\,\,\mbox{for } i,j \in \{1,\ldots,p\};\, i \neq j.
\end{align*}
\end{thm}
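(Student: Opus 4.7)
The plan is to reduce to the canonical form of $\bA$ given in \eqref{block-A} with $\tbA$ as in \eqref{block-tA}, via the Frobenius isometry $\bQ \mapsto \bU_{\bA}^T \bQ \bV_{\bA}$ on ${\rm St}(d,K)$, which identifies $\bar{\mQ}_{\bq}$ with $\mQ_{\bq}$ and maps $R(\bQ)$ equivariantly. Writing $\bQ$ in the block form \eqref{block-Q}, the strategy is to construct an explicit $\bQ^* \in \mQ_{\bq}$ with $\|\bQ - \bQ^*\|_F \le \kappa \|R(\bQ)\|_F$ by specifying it block by block. The identities that will drive the analysis are (i) the block expression of $R(\bQ) = \bA - \bQ\bA^T\bQ$ and (ii) the antisymmetry of $\bQ^T R(\bQ) = \bQ^T \bA - \bA^T \bQ$, which follows from $\bQ^T \bQ = \bI_K$. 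In particular, the $(1,2)$-block of $\bQ^T R(\bQ)$ equals $-\tbA \bQ_2$, immediately yielding $\|\bQ_2\|_F \le \|R(\bQ)\|_F / a_r$.

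The next step is to derive perturbed analogs of each equality in the proof of Proposition~\ref{prop:crit-LP-OC}. Examining the $(1,1)$-block of $\bQ^T R(\bQ)$ in the sub-partition used for \eqref{eq:sym-diff} quantifies how close $a_{s_i}\bQ_{h_jh_i}$ is to $a_{s_j}\bQ_{h_ih_j}^T$, giving a perturbation of \eqref{eq-4:lem-crit-LP-OC}. Combining this with the Stiefel identity $\bQ_1^T\bQ_1+\bQ_3^T\bQ_3=\bI_r$ yields perturbed versions of \eqref{eq-6:lem-crit-LP-OC} and \eqref{eq-7:lem-crit-LP-OC}, in which each weighted sum $\sum_{i\ne j}\bigl(1-a_{s_j}^2/a_{s_i}^2\bigr)\|\bQ_{h_jh_i}\|_F^2$ is no longer zero but is controlled by a multiple of $\|R(\bQ)\|_F^2/a_r^2$. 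Dividing by the spectral gaps produces $\|\bQ_{h_ih_j}\|_F = O(\|R(\bQ)\|_F/(a_r|\delta_{ij}|))$ for $i\ne j$, together with a companion bound on $\|\bQ_3\|_F$. For the diagonal sub-block $\bQ_{h_ih_i}$, the $(1,1)$-block of $\bQ^T R(\bQ)$ bounds its antisymmetric part, while the $(1,1)$-block of $R(\bQ)$ combined with the near-orthogonality of $\bQ_1$ forces $\bQ_{h_ih_i}^T\bQ_{h_ih_i}$ to be close to $\bI_{h_i}$. Hence the symmetric part $\bS_i=(\bQ_{h_ih_i}+\bQ_{h_ih_i}^T)/2$ has eigenvalues within $O(\|R(\bQ)\|_F)$ of $\pm 1$; diagonalizing $\bS_i=\bU_i\bm{\Lambda}_i\bU_i^T$ and rounding each eigenvalue to its sign produces $\bq_i'\in\{\pm 1\}^{h_i}$ and a matrix $\bU_i\diag(\bq_i')\bU_i^T\in\mO^{h_i}$ within $O(\|R(\bQ)\|_F)$ of $\bQ_{h_ih_i}$.

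We then assemble $\bQ^*$ with top-left block $\bldg(\bU_1\diag(\bq_1')\bU_1^T,\ldots,\bU_p\diag(\bq_p')\bU_p^T)$, off-diagonal blocks zero, and bottom-right block $\bQ_4^*$ equal to the polar projection of $\bQ_4$ onto ${\rm St}(d-r,K-r)$; summing the above block-wise estimates yields $\|\bQ-\bQ^*\|_F\le\tilde\kappa\|R(\bQ)\|_F$. The final point is to verify that $\bQ^*\in\mQ_{\bq}$, i.e., that the rounded pattern $\bq'$ matches $\bq$ up to intra-block permutation. If it does not, then $\bQ^*\in\mQ_{\bq'}$ with $\mQ_{\bq'}\ne\mQ_{\bq}$, so Proposition~\ref{prop:dist-Q} forces $\dist(\mQ_{\bq},\mQ_{\bq'})\ge 2$; combined with the hypothesis $\dist(\bQ,\mQ_{\bq})<1$ and the triangle inequality, this contradicts $\|\bQ-\bQ^*\|_F<1$. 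The bound will therefore first be proved under the additional smallness assumption $\tilde\kappa\|R(\bQ)\|_F<1$, and then extended to the remaining range by invoking $\dist(\bQ,\mQ_{\bq})<1$ as a trivial a priori bound on the left-hand side and enlarging the constant accordingly.

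The main obstacle is the quantitative passage from the exact relations in the proof of Proposition~\ref{prop:crit-LP-OC} to their perturbed counterparts. Producing the sharp dependence on $\min_{i\ne j}|\delta_{ij}|^{-1}$ inside $\kappa$, and simultaneously a clean separation between the estimates for the off-diagonal $\bQ_{h_ih_j}$ ($i\ne j$) and for $\bQ_3$, requires careful tracking of the cross-terms in the perturbed analogs of \eqref{eq-6:lem-crit-LP-OC} and \eqref{eq-7:lem-crit-LP-OC}; this bookkeeping is where the explicit numerical constants $13$ and $6(6p-5)$ appearing in $\kappa$ will originate.
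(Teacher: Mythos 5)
Your overall architecture is the same as the paper's: reduce to the canonical form \eqref{block-A} via the SVD isometry, split $\dist^2(\bQ,\mQ_{\bq})$ block by block as in \eqref{eq:EB-decomp}, bound $\bQ_2,\bQ_3$ and the Procrustes distance of $\bQ_4$ directly by $\|R(\bQ)\|_F$, and handle the top-left block by controlling its skew part and the deviation of $\bQ_{h_ih_i}^T\bQ_{h_ih_i}$ from $\bI_{h_i}$, with the hypothesis $\dist(\bQ,\mQ_{\bq})<1$ and Proposition~\ref{prop:dist-Q} used to pin down the sign pattern (your a~posteriori rounding-plus-separation argument is an equivalent packaging of the paper's sign-matching claim in Proposition~\ref{prop:blk1}). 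The genuine problem is the mechanism you propose for the off-diagonal blocks $\bQ_{h_ih_j}$, $i\neq j$: you want perturbed versions of the aggregate identities \eqref{eq-6:lem-crit-LP-OC} and \eqref{eq-7:lem-crit-LP-OC} and then to ``divide by the spectral gaps.'' This does not work as stated, because for fixed $j$ the coefficients $1-a_{s_j}^2/a_{s_i}^2$ are positive for $i<j$ and negative for $i>j$, so smallness of the weighted sum gives no control on individual terms; even in the exact critical-point proof one cannot conclude from either identity alone, and the conclusion $\bQ_{h_ih_j}=\b0$ requires playing \eqref{eq-6:lem-crit-LP-OC} and \eqref{eq-7:lem-crit-LP-OC} against each other using the strict ordering of the $a_{s_i}$ (effectively a peeling argument starting from $j=p$, where all coefficients have one sign). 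A quantitative perturbation of that peeling argument is possible in principle, but it is not mere division, it accumulates constants across the induction, and it will not reproduce the explicit $\kappa$ with the factors $13$ and $6(6p-5)$ stated in the theorem.

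The paper's Lemma~\ref{prop:bound-R(Q)} avoids this entirely by working per block pair: it bounds both $\bigl\|\tfrac{a_{s_i}}{a_{s_j}}\bQ_{h_ih_j}-\bQ_{h_jh_i}^T\bigr\|_F$ and $\bigl\|\tfrac{a_{s_j}}{a_{s_i}}\bQ_{h_ih_j}-\bQ_{h_jh_i}^T\bigr\|_F$ by multiples of $\|R(\bQ)\|_F/a_r$ and writes $\delta_{ij}\bQ_{h_ih_j}$ as their difference, so the gap $\delta_{ij}$ enters directly and no orthogonality of $\bQ_1$ is needed for this step. Note also that your single identity ``$\bQ^TR(\bQ)=\bQ^T\bA-\bA^T\bQ$ is skew'' only yields the first of these two relations (it controls $\tbA^T\bQ_1-\bQ_1^T\tbA$, and likewise gives $\|\bQ_2\|_F$, but it contains no information on $\bQ_3$); the companion control of $\tbA\bQ_1^T-\bQ_1\tbA^T$ does not follow from $\bQ^T\bQ=\bI_K$ since $\bQ\bQ^T\neq\bI_d$, and the paper obtains it, as well as the $\bQ_3$ bound, from separate estimates (\eqref{eq-11:prop-bound-R(Q)} and Proposition~\ref{prop:blk2-4}). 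So to complete your plan you must either supply the perturbed peeling argument with explicit constants, or replace that step by the per-block difference argument above. Two smaller points: when aggregating, the cross terms produced by squaring approximate identities are first order in the residual, so the bookkeeping must be arranged so that the final inequality is of the form $x^2\lesssim \epsilon x+\epsilon^2$ (giving $x\lesssim\epsilon$) rather than $x^2\lesssim\epsilon$, otherwise you only get a square-root (hence wrong K\L-exponent) bound; and your final case split needs no enlargement of the constant, since when $\kappa\|R(\bQ)\|_F\ge 1$ the claimed inequality already follows from $\dist(\bQ,\bar{\mQ}_{\bq})<1$.
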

It is worth noting that error bounds of similar nature have been extensively used to study the convergence behavior of various iterative methods; see, e.g.,~\cite{BNPS17,LYS17,Liu2018,zhou2017unified} for some recent developments. Thus, Theorem \ref{lem:EB} can be of independent interest.

To prove Theorem~\ref{lem:EB}, observe that since
\begin{align*}
\dist(\bQ, \bar{\mQ}_{\bq}) &= \dist(\bU_{\bA}^T\bQ\bV_{\bA}, \mQ_{\bq}), \\
\| \bA - \bQ\bA^T\bQ \|_F &= \| \bSigma_{\bA} - (\bU_{\bA}^T\bQ\bV_{\bA})\bSigma_{\bA}^T(\bU_{\bA}^T\bQ\bV_{\bA}) \|_F, \\
\bU_{\bA}^T\bQ\bV_{\bA} &\in {\rm St}(d,K),
\end{align*}
it suffices to establish~\eqref{EB:LP-OC} for the case where $\bA$ has the block structure given in~\eqref{block-A} (in particular, we have $\bU_{\bA} = \bI_d$, $\bV_{\bA}=\bI_K$, and $\bar{\mQ}_{\bq} = \mQ_{\bq}$). In view of the structure of $\mQ_{\bq}$ given in~\eqref{set:Q-p}, a natural idea is to first consider the partition $\bQ = \begin{bmatrix} \bQ_1 & \bQ_2 \\ \bQ_3 & \bQ_4 \end{bmatrix} \in {\rm St}(d,K)$ as in~\eqref{block-Q} and observe that
\be \label{eq:EB-decomp}
\dist^2(\bQ,\mQ_{\bq}) = \dist^2(\bQ_1,\mQ_{\bq}^1) + \|\bQ_2\|_F^2 + \|\bQ_3\|_F^2 + \min_{\bV \in {\rm St}(d-r,K-r)} \| \bQ_4 - \bV \|_F^2,
\ee
where
\be \label{eq:calQ-1}
\mQ_{\bq}^1 = \left\{ \bU\diag(\bq) \bU^T:\ \bU = \bldg(\bU_{1},\dots,\bU_{p}), \,\,\, \bU_i\in \mO^{h_i} \,\,\,\mbox{for}\,\,\, i \in \{1,\ldots,p\} \right\}.
\ee
Then, it suffices to bound each of the terms on the right-hand side of~\eqref{eq:EB-decomp} separately. Let us begin by dispensing with the easy cases.
\begin{prop} \label{prop:blk2-4}
Suppose that $\bA \in \R^{d \times K}$ has the form given in~\eqref{block-A} and $\bQ \in {\rm St}(d,K)$ is partitioned according to~\eqref{block-Q}. Then, the following hold:
\begin{align}
& a_{r}^2\|\bQ_2\|_F^2 \le \|R(\bQ)\|_F^2,\quad a_{r}^2\|\bQ_3\|_F^2 \le \|R(\bQ)\|_F^2, \label{bound:Q2-3} \\
& \min_{\bV \in {\rm St}(d-r,K-r)} \|\bQ_4 - \bV\|_F^2 \le \|\bQ_2\|_F^2. \label{bound:Q4}
\end{align}
\end{prop}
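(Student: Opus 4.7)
The plan is to derive all three bounds from the block form of $R(\bQ)$ induced by~\eqref{block-A} and~\eqref{block-Q}, using the orthogonality $\bQ^T\bQ = \bI_K$ at two key places. A direct block expansion gives
\[
R(\bQ) = \begin{bmatrix}
\widetilde{\bA} - \bQ_1\widetilde{\bA}\bQ_1 & -\bQ_1\widetilde{\bA}\bQ_2 \\
-\bQ_3\widetilde{\bA}\bQ_1 & -\bQ_3\widetilde{\bA}\bQ_2
\end{bmatrix},
\]
and for $\bQ_2$ the argument is immediate: the second block column factors as $-\begin{bmatrix}\bQ_1\\ \bQ_3\end{bmatrix}\widetilde{\bA}\bQ_2$, and since $\begin{bmatrix}\bQ_1\\ \bQ_3\end{bmatrix}$ is the first $r$ columns of $\bQ$ and so sits in ${\rm St}(d,r)$, premultiplication by it preserves the Frobenius norm, yielding $\|R(\bQ)\|_F^2 \ge \|\widetilde{\bA}\bQ_2\|_F^2 \ge a_r^2\|\bQ_2\|_F^2$.

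The bound on $\bQ_3$ is the technical crux, because the row analogue fails: the top $r$ rows $[\bQ_1,\bQ_2]$ of $\bQ$ need not be row-orthonormal, so the block row $[-\bQ_3\widetilde{\bA}\bQ_1,\,-\bQ_3\widetilde{\bA}\bQ_2]$ does not factor usefully. My workaround will be to establish the global identity
\[
\|R(\bQ)\|_F^2 = \bigl(\|\bA\|_F^2 - \|\bQ^T\bA\|_F^2\bigr) + \|\bQ^T\bA - \bA^T\bQ\|_F^2,
\]
obtained by expanding $\|\bA - \bQ\bA^T\bQ\|_F^2$, using $\bQ^T\bQ = \bI_K$ to reduce $\|\bQ\bA^T\bQ\|_F^2$ to $\|\bQ^T\bA\|_F^2$, and then rewriting $\Tr((\bA^T\bQ)^2)$ as $\|\bQ^T\bA\|_F^2 - \tfrac12\|\bQ^T\bA - \bA^T\bQ\|_F^2$. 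Inserting the block forms, the second summand evaluates to $\|\bQ_1^T\widetilde{\bA} - \widetilde{\bA}\bQ_1\|_F^2 + 2\|\widetilde{\bA}\bQ_2\|_F^2$ (the $\widetilde{\bA}\bQ_2$ contributions appear on both off-diagonals of the skew-symmetrization), which is at least $2a_r^2\|\bQ_2\|_F^2$. The first summand equals $\Tr\bigl(\widetilde{\bA}^2(\bI_r - \bQ_1\bQ_1^T - \bQ_2\bQ_2^T)\bigr)$; since $\bI_r - \bQ_1\bQ_1^T - \bQ_2\bQ_2^T$ is the $r\times r$ principal block of the projection $\bI_d - \bQ\bQ^T$ and therefore PSD, and $\widetilde{\bA}^2 \succeq a_r^2\bI_r$, this is at least $a_r^2\bigl(r - \|\bQ_1\|_F^2 - \|\bQ_2\|_F^2\bigr) = a_r^2\bigl(\|\bQ_3\|_F^2 - \|\bQ_2\|_F^2\bigr)$, where the last equality uses the column-orthonormality identity $\|\bQ_1\|_F^2 + \|\bQ_3\|_F^2 = r$ coming from tracing $\bQ_1^T\bQ_1 + \bQ_3^T\bQ_3 = \bI_r$. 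Summing the two estimates delivers the (in fact strengthened) bound $\|R(\bQ)\|_F^2 \ge a_r^2\bigl(\|\bQ_2\|_F^2 + \|\bQ_3\|_F^2\bigr)$, from which both halves of~\eqref{bound:Q2-3} follow at once.

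For~\eqref{bound:Q4} I would pick $\bV$ via a thin SVD: writing $\bQ_4 = \bU_4\bSigma_4\bV_4^T$ with $\bU_4 \in {\rm St}(d-r,K-r)$ (valid because $d \ge K$) and taking $\bV := \bU_4\bV_4^T \in {\rm St}(d-r,K-r)$ gives
\[
\min_{\bV \in {\rm St}(d-r,K-r)} \|\bQ_4 - \bV\|_F^2 \le \|\bSigma_4 - \bI_{K-r}\|_F^2 = \sum_{j=1}^{K-r}\bigl(1 - \sigma_j(\bQ_4)\bigr)^2.
\]
The orthogonality $\bQ_2^T\bQ_2 + \bQ_4^T\bQ_4 = \bI_{K-r}$ forces $\sigma_j(\bQ_4) = \sqrt{1-\mu_j}$, where $\mu_j \in [0,1]$ ranges over the eigenvalues of $\bQ_2^T\bQ_2$, after which the elementary inequality $(1-\sqrt{1-\mu})^2 \le \mu$ on $[0,1]$ (verified by the substitution $t=\sqrt{1-\mu}$) closes the argument: $\sum_j(1-\sqrt{1-\mu_j})^2 \le \sum_j\mu_j = \|\bQ_2\|_F^2$. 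The main obstacle throughout is the $\bQ_3$ bound; once the trace identity above is spotted, it recasts the problem as a PSD estimate against $\bI_r - \bQ_1\bQ_1^T - \bQ_2\bQ_2^T$ and the rest is a careful but mechanical block computation.
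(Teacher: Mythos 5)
Your proposal is correct. For the $\bQ_2$ half of \eqref{bound:Q2-3} and for \eqref{bound:Q4} you essentially reproduce the paper's argument: the block-column factorization together with column orthonormality of $[\bQ_1^T\ \bQ_3^T]^T$, and a thin SVD of $\bQ_4$ followed by an elementary scalar inequality (you use $(1-\sigma)^2\le 1-\sigma^2$ and a trace, the paper uses $(1-\sigma)^2\le(1-\sigma^2)^2$ and $\|\bQ_2^T\bQ_2\|_F^2\le\|\bQ_2\|_F^2$; both close the gap). Where you genuinely diverge is the $\bQ_3$ bound. The paper stays inside the block expansion \eqref{eq-1:prop-bound-R(Q)}, applies the symmetrization inequality \eqref{eq:ip-ineq} to reach \eqref{eq-4:prop-bound-R(Q)}, and then combines it with $\|\tbA\bQ_3^T\|_F^2=\|\tbA\|_F^2-\|\tbA\bQ_1^T\|_F^2$; you instead establish the exact, coordinate-free identity $\|R(\bQ)\|_F^2=\bigl(\|\bA\|_F^2-\|\bQ^T\bA\|_F^2\bigr)+\|\bQ^T\bA-\bA^T\bQ\|_F^2$, valid for any $\bQ\in{\rm St}(d,K)$, and then bound the first term by a PSD comparison of $\tbA^2\succeq a_r^2\bI_r$ against the principal block $\bI_r-\bQ_1\bQ_1^T-\bQ_2\bQ_2^T$ of $\bI_d-\bQ\bQ^T$, using ${\rm tr}(\bQ_1^T\bQ_1+\bQ_3^T\bQ_3)=r$. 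Every step checks out (the identity, the block evaluation $\|\bQ^T\bA-\bA^T\bQ\|_F^2=\|\bQ_1^T\tbA-\tbA\bQ_1\|_F^2+2\|\tbA\bQ_2\|_F^2$, the PSD trace estimate), and your route even buys the strictly stronger joint estimate $a_r^2\bigl(\|\bQ_2\|_F^2+\|\bQ_3\|_F^2\bigr)\le\|R(\bQ)\|_F^2$, with the skew-symmetric part appearing exactly rather than only as the lower bound the paper extracts in \eqref{eq-6:prop-bound-R(Q)}. What the paper's blockwise organization buys in exchange is reuse: the intermediate bounds \eqref{eq-1:prop-bound-R(Q)} and \eqref{eq-4:prop-bound-R(Q)} are invoked verbatim again in the proof of Lemma~\ref{prop:bound-R(Q)}, whereas your identity would serve that purpose too but is not how the appendix is structured.
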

\begin{proof}
We first prove~\eqref{bound:Q2-3}. Using the block structures of $\bA$ in~\eqref{block-A} and $\bQ$ in~\eqref{block-Q} and the fact that $\bQ_1^T\bQ_1 + \bQ_3^T\bQ_3 = \bI_r$, we compute
\begin{align}\label{eq-1:prop-bound-R(Q)}
\|R(\bQ)\|_F^2 & = \|\tbA - \bQ_1\tbA^T\bQ_1\|_F^2 + \|\bQ_1\tbA^T\bQ_2\|_F^2 + \|\bQ_3\tbA^T\bQ_1\|_F^2 + \|\bQ_3\tbA^T\bQ_2\|_F^2 \notag\\
&= \|\tbA - \bQ_1\tbA^T\bQ_1\|_F^2 + \|\bQ_3\tbA^T\bQ_1\|_F^2 + \|\tbA^T\bQ_2\|_F^2.
\end{align} 
This, together with the definition of $\tbA$, implies that 
\[ \|R(\bQ)\|_F^2  \ge \|\tbA^T\bQ_2\|_F^2 \ge a_r^2 \|\bQ_2\|_F^2. \]
Moreover, since 
\be \label{eq:ip-ineq}
0 \le \langle \bB - \bB^T, \bB - \bB^T \rangle = 2 \left( \langle \bB, \bB \rangle - \langle \bB^T, \bB \rangle \right) 
\ee
for any $\bB \in \R^{r \times r}$, we obtain from~\eqref{eq-1:prop-bound-R(Q)} that
\begin{align}
\|R(\bQ)\|_F^2 &\ge \|\tbA - \bQ_1\tbA^T\bQ_1\|_F^2 + \|\bQ_3\tbA^T\bQ_1\|_F^2 \nonumber \\
&= \|\tbA\|_F^2 - 2 \langle \tbA, \bQ_1\tbA^T\bQ_1 \rangle + \|\tbA^T\bQ_1\|_F^2 \nonumber \\
&= \|\tbA\|_F^2 - \langle \bQ_1^T\tbA, \tbA^T\bQ_1 \rangle - \langle \bQ_1^T\tbA, \tbA^T\bQ_1 \rangle + \langle \tbA^T\bQ_1, \tbA^T\bQ_1 \rangle \nonumber \\
&\ge \|\tbA\|_F^2 - \langle \bQ_1^T\tbA, \tbA^T\bQ_1 \rangle. \label{eq-4:prop-bound-R(Q)}
\end{align}
Using the fact that $\bQ_1^T\bQ_1 + \bQ_3^T\bQ_3 = \bI_r$ and invoking~\eqref{eq:ip-ineq},~\eqref{eq-4:prop-bound-R(Q)}, we obtain
\begin{align*}
a_{r}^2\|\bQ_3\|_F^2 &\le \|\tbA\bQ_3^T\|_F^2 = \|\tbA\|_F^2 - \|\tbA\bQ_1^T\|_F^2 \\
&\le \|\tbA\|_F^2 - \langle \bQ_1\tbA^T, \tbA\bQ_1^T \rangle = \|\tbA\|_F^2 - \langle \bQ_1^T\tbA, \tbA^T\bQ_1 \rangle \\
&\le \|R(\bQ)\|_F^2.
\end{align*}
Next, we prove~\eqref{bound:Q4}. Let $\bQ_4=\bU_4\bSigma_4\bV_4^T$ be a thin SVD of $\bQ_4$, where $\bSigma_4 = \diag(\sigma_1,\dots,\sigma_{K-r})$ with $\sigma_1\ge \cdots \ge \sigma_{K-r} \ge 0$ being the singular values of $\bQ_4$, $\bU_4 \in {\rm St}(d-r,K-r)$, and $\bV_4 \in \mO^{K-r}$. Noting that the left-hand side of~\eqref{bound:Q4} is an instance of the orthogonal Procrustes problem \cite{schonemann1966generalized}, we have
\[ \min_{\bV \in {\rm St}(d-r,K-r)} \| \bQ_4 - \bV \|_F^2 = \| \bQ_4 - \bU_4\bV_4^T \|_F^2 = \sum_{i=1}^{K-r} (1-\sigma_i)^2. \]
Using the facts that (i) $(1-x)^2 \le (1-x)^2(1+x)^2$ for any $x\ge0$, (ii) $\bQ_2^T\bQ_2 + \bQ_4^T\bQ_4 = \bI_{K-r}$, and (iii) $\|\bQ_2\| \le \|\bQ\| \le 1$, we obtain
\[ \sum_{i=1}^{K-r} (1-\sigma_i)^2 \le \sum_{i=1}^{K-r} (1-\sigma_i^2)^2 = \|\bI_{K-r} - \bQ_4^T\bQ_4\|_F^2  =  \|\bQ_2^T\bQ_2\|_F^2 \le \|\bQ_2\|_F^2. \]
This completes the proof.
\end{proof}

Now, it remains to bound $\dist^2(\bQ_1,\mQ_{\bq}^1)$. The following technical lemma, whose proof can be found in Appendix~\ref{sec:appen-A}, will be useful for that purpose. Recall that $\delta_{ij} = \tfrac{a_{s_i}}{a_{s_j}}-\tfrac{a_{s_j}}{a_{s_i}}$ for $i,j\in\{1,\ldots,p\}$; $i\not=j$.
\begin{lemma}\label{prop:bound-R(Q)}
Suppose that $\bA \in \R^{d \times K}$ has the form given in~\eqref{block-A} and $\bQ \in {\rm St}(d,K)$ is partitioned according to~\eqref{block-Q}. Then, the following hold:
\begin{align}
a_r^2 \sum_{i=1}^p \|\bQ_{h_ih_i} - \bQ_{h_ih_i}^T \|_F^2 &\le \|R(\bQ)\|_F^2, \label{bound:diag-blk} \\
a_{r}^2 \left( \min_{i,j\in\{1,\ldots,p\} \atop i\not=j} \delta_{ij}^2 \right) \sum_{i=1}^p \sum_{j\neq i} \|\bQ_{h_ih_j}\|_F^2 &\le 6\|R(\bQ)\|_F^2. \label{bound:off-diag-blk}
\end{align}
\end{lemma}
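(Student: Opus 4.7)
Both inequalities follow by extracting specific block information from a single auxiliary identity. Since $\bQ^T\bQ=\bI_K$, we have $\bQ^T R(\bQ)=\bQ^T\bA-\bA^T\bQ$, a skew-symmetric matrix with $\|\bQ^T R(\bQ)\|_F\le\|R(\bQ)\|_F$. Using the block form~\eqref{block-A} of $\bA$ and the partition~\eqref{block-Q} of $\bQ$, a direct computation shows that the top-left $r\times r$ block of $\bQ^T\bA-\bA^T\bQ$ equals $M_1:=\bQ_1^T\tbA-\tbA\bQ_1$, so $\|M_1\|_F^2\le\|R(\bQ)\|_F^2$; and, using~\eqref{block-tA}, the $(i,j)$-subblock of $M_1$ is $a_{s_j}\bQ_{h_jh_i}^T-a_{s_i}\bQ_{h_ih_j}$.

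\textbf{Diagonal bound \eqref{bound:diag-blk}.} Restricting the subblock identity to $i=j$ yields $(M_1)_{(i,i)}=a_{s_i}(\bQ_{h_ih_i}^T-\bQ_{h_ih_i})$. Summing the squared Frobenius norms and using $a_{s_i}\ge a_r$ gives $a_r^2\sum_i\|\bQ_{h_ih_i}-\bQ_{h_ih_i}^T\|_F^2\le\|M_1\|_F^2\le\|R(\bQ)\|_F^2$, which is exactly~\eqref{bound:diag-blk}.

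\textbf{Off-diagonal bound \eqref{bound:off-diag-blk}.} For $i\neq j$, the subblock identity rearranges to $\bQ_{h_ih_j}=\tfrac{a_{s_j}}{a_{s_i}}\bQ_{h_jh_i}^T-\tfrac{1}{a_{s_i}}(M_1)_{(i,j)}$. Squaring the Frobenius norm yields an approximate symmetry $\|\bQ_{h_ih_j}\|_F^2\approx\tfrac{a_{s_j}^2}{a_{s_i}^2}\|\bQ_{h_jh_i}\|_F^2$ with error controlled by $\|(M_1)_{(i,j)}\|_F$. Because this relation alone cannot decouple $\|\bQ_{h_ih_j}\|_F$ from $\|\bQ_{h_jh_i}\|_F$, I would inject a second independent identity coming from the Stiefel constraint $\bQ_1^T\bQ_1+\bQ_3^T\bQ_3=\bI_r$: the trace of its $(j,j)$-block gives $\sum_k\|\bQ_{h_kh_j}\|_F^2=h_j-\|\bQ_3^{(j)}\|_F^2$, and Proposition~\ref{prop:blk2-4} supplies $\|\bQ_3\|_F^2\le\|R(\bQ)\|_F^2/a_r^2$, so that the column-sum is $h_j$ up to a small quadratic error. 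Substituting the approximate symmetry into this column-sum produces a scalar linear relation in $\|\bQ_{h_jh_i}\|_F^2$ whose coefficient $1-\tfrac{a_{s_j}^2}{a_{s_i}^2}=\tfrac{a_{s_j}}{a_{s_i}}\delta_{ij}$ is exactly what accounts for the denominator $\min_{i\neq j}\delta_{ij}^2$ in the statement; bounding the several bilinear cross terms via $2|ab|\le a^2+b^2$ and summing over all ordered pairs $(i,j)$ with $i\neq j$ absorbs the arithmetic constants into the factor $6$.

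\textbf{Main obstacle.} The off-diagonal estimate~\eqref{bound:off-diag-blk} is the technically demanding step. The $M_1$ identity only controls the single linear combination $a_{s_i}\bQ_{h_ih_j}-a_{s_j}\bQ_{h_jh_i}^T$ and cannot separate $\bQ_{h_ih_j}$ from $\bQ_{h_jh_i}$ individually; moreover, the Stiefel constraint supplies only column-sum (not row-sum) orthonormality of $\bQ_1$. Extracting each $\|\bQ_{h_ih_j}\|_F$ therefore requires coupling these two relations into a scalar linear system whose conditioning is governed by $\delta_{ij}^{-2}$, and the multiplicative constant $6$ emerges only after carefully accounting for all the cross terms generated when squaring the approximate symmetry and propagating the quadratic error in $\|\bQ_3\|_F^2$ through the column-sum.
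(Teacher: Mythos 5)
Your treatment of the diagonal bound \eqref{bound:diag-blk} is correct: passing to $\bQ^T R(\bQ)=\bQ^T\bA-\bA^T\bQ$, extracting its top-left $r\times r$ block $\bQ_1^T\tbA-\tbA\bQ_1$, and reading off the $(i,i)$ subblocks gives exactly the stated inequality; this is a slightly more direct route than the paper's, which instead expands $\|R(\bQ)\|_F^2$ and shows $\|R(\bQ)\|_F^2\ge\|\tbA^T\bQ_1-\bQ_1^T\tbA\|_F^2$, but the two arguments are essentially equivalent.

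The off-diagonal bound \eqref{bound:off-diag-blk} is where there is a genuine gap. Your $M_1$ identity only controls the single combination $a_{s_i}\bQ_{h_ih_j}-a_{s_j}\bQ_{h_jh_i}^T$, and the patch you propose (column sums from $\bQ_1^T\bQ_1+\bQ_3^T\bQ_3=\bI_r$ plus Proposition~\ref{prop:blk2-4}) does not decouple the blocks: substituting the approximate symmetry into the column sum yields, for each block column $j$, one aggregated scalar relation $\sum_{i\neq j}\bigl(1-\tfrac{a_{s_j}^2}{a_{s_i}^2}\bigr)\|\bQ_{h_jh_i}\|_F^2\approx\text{small}$ whose coefficients have mixed signs (positive when $a_{s_i}>a_{s_j}$, negative when $a_{s_i}<a_{s_j}$), so positive and negative contributions can cancel and no individual $\|\bQ_{h_ih_j}\|_F^2$ is controlled. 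In the exact critical-point analysis (Proposition~\ref{prop:crit-LP-OC}) this is resolved by using \emph{both} row and column orthonormality of the square orthogonal $\bQ_1$ together with both directions of the symmetry relation and an implicit induction over the singular-value groups; in the inexact setting that scheme would at best produce errors that compound with singular-value ratios and with cross terms that are \emph{linear} in $\|R(\bQ)\|_F$ (your $2|ab|\le a^2+b^2$ step does not remove them without spoiling the coefficients), so the clean constant $6/(a_r^2\min_{i\neq j}\delta_{ij}^2)$ would not emerge. The missing idea in the paper's proof is to control a \emph{second} skew combination: from the expansion \eqref{eq-4:prop-bound-R(Q)} one also gets $\|\tbA\bQ_1^T-\bQ_1\tbA^T\|_F^2\le 2\|R(\bQ)\|_F^2$, whose $(i,j)$ subblocks control $\tfrac{a_{s_j}}{a_{s_i}}\bQ_{h_ih_j}-\bQ_{h_jh_i}^T$ while your $M_1$ controls $\tfrac{a_{s_i}}{a_{s_j}}\bQ_{h_ih_j}-\bQ_{h_jh_i}^T$; subtracting the two eliminates $\bQ_{h_jh_i}^T$ blockwise and leaves $\delta_{ij}\bQ_{h_ih_j}$, which immediately gives \eqref{bound:off-diag-blk} with the factor $6=2(1+2)$. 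Without this second commutator (or an equally effective per-pair elimination), your argument cannot reach the stated inequality.
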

\begin{prop} \label{prop:blk1}
Suppose that $\bA \in \R^{d \times K}$ has the form given in~\eqref{block-A} and let $\bq\in\{\pm1\}^r$ be given. Furthermore, suppose that $\bQ \in {\rm St}(d,K)$ satisfies $\dist(\bQ,\mQ_{\bq}) < 1$ and is partitioned according to~\eqref{block-Q}. Then,
\begin{align}\label{bound:Q1}
\dist^2(\bQ_1,\mQ_{\bq}^1) \le \frac{1}{a_r^2} \left( 10 + 6(6p-5) \left( \min_{i,j\in\{1,\ldots,p\} \atop i\not=j} \delta_{ij}^2 \right)^{-1} \right) \|R(\bQ)\|_F^2.
\end{align}
\end{prop}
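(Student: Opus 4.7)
My plan is to exploit the block-diagonal structure of every element of $\mQ_{\bq}^1$. Writing $\bq = (\bq_1,\ldots,\bq_p)$ with $\bq_i \in \{\pm 1\}^{h_i}$ and noting that each element of $\mQ_{\bq}^1$ has zero off-diagonal blocks and $i$-th diagonal block of the form $\bU_i \diag(\bq_i)\bU_i^T$ for some $\bU_i \in \mO^{h_i}$, I would begin by splitting the squared distance as
$$\dist^2(\bQ_1,\mQ_{\bq}^1) = \sum_{i=1}^p \min_{\bU_i \in \mO^{h_i}} \|\bQ_{h_ih_i} - \bU_i\diag(\bq_i)\bU_i^T\|_F^2 + \sum_{i \ne j} \|\bQ_{h_ih_j}\|_F^2.$$
The off-diagonal sum will be absorbed directly via \eqref{bound:off-diag-blk}. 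For each diagonal block, I would decompose $\bQ_{h_ih_i} = S_i + A_i$ into its symmetric and anti-symmetric parts; since every admissible $\bU_i \diag(\bq_i) \bU_i^T$ is symmetric, this orthogonalizes the residual as $\|\bQ_{h_ih_i} - \bU_i\diag(\bq_i)\bU_i^T\|_F^2 = \|S_i - \bU_i\diag(\bq_i)\bU_i^T\|_F^2 + \|A_i\|_F^2$, and the anti-symmetric contribution $\sum_i \|A_i\|_F^2 = \tfrac{1}{4}\sum_i \|\bQ_{h_ih_i}-\bQ_{h_ih_i}^T\|_F^2$ is controlled by \eqref{bound:diag-blk}.

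The heart of the argument is bounding $\sum_i \min_{\bU_i}\|S_i - \bU_i\diag(\bq_i)\bU_i^T\|_F^2$. Let $\mu_{i,1}\ge\cdots\ge\mu_{i,h_i}$ denote the eigenvalues of $S_i$ and $t_i$ the number of $+1$ entries of $\bq_i$. Since $\{\bU_i \diag(\bq_i)\bU_i^T : \bU_i \in \mO^{h_i}\}$ is precisely the set of symmetric matrices whose eigenvalue multiset agrees with that of $\bq_i$, the Wielandt-Hoffman inequality will give
$$\min_{\bU_i \in \mO^{h_i}} \|S_i - \bU_i \diag(\bq_i)\bU_i^T\|_F^2 = \sum_{k=1}^{t_i}(\mu_{i,k}-1)^2 + \sum_{k=t_i+1}^{h_i}(\mu_{i,k}+1)^2.$$
The main obstacle, as I see it, is passing from this expression to the uniform majorant $\sum_k (\mu_{i,k}^2 - 1)^2 = \|S_i^2 - \bI_{h_i}\|_F^2$, which requires $|\mu_{i,k}+1|\ge 1$ for $k\le t_i$ and $|\mu_{i,k}-1|\ge 1$ for $k > t_i$. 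This is exactly where the hypothesis $\dist(\bQ,\mQ_{\bq})<1$ must be used: picking any $\hat\bQ \in \mQ_{\bq}$ attaining this distance and applying Weyl's inequality to $S_i$ against the symmetric $i$-th diagonal block $\hat\bU_i\diag(\bq_i)\hat\bU_i^T$ of $\hat\bQ_1$, whose ordered eigenvalues are $t_i$ copies of $+1$ followed by $h_i-t_i$ copies of $-1$, will yield $|\mu_{i,k}-1|<1$ for $k\le t_i$ and $|\mu_{i,k}+1|<1$ for $k>t_i$, the desired strict sign separation.

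To finish, since $\bQ \in {\rm St}(d,K)$ forces $|\mu_{i,k}| \le \|S_i\| \le \|\bQ\| \le 1$, we have $(1-\mu_{i,k}^2)^2 \le 1-\mu_{i,k}^2$, hence $\|S_i^2 - \bI_{h_i}\|_F^2 \le h_i - \|S_i\|_F^2$. Combining $\|S_i\|_F^2 = \|\bQ_{h_ih_i}\|_F^2 - \|A_i\|_F^2$ with the trace identity $\|\bQ_1\|_F^2 = r - \|\bQ_3\|_F^2$ extracted from $\bQ_1^T\bQ_1+\bQ_3^T\bQ_3=\bI_r$, summation over $i$ will give
$$\sum_i (h_i-\|S_i\|_F^2) = \|\bQ_3\|_F^2 + \sum_{i \ne j} \|\bQ_{h_ih_j}\|_F^2 + \sum_i \|A_i\|_F^2,$$
each summand being bounded by a constant multiple of $\|R(\bQ)\|_F^2$ via \eqref{bound:Q2-3}, \eqref{bound:off-diag-blk}, and \eqref{bound:diag-blk}, respectively. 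Aggregating the off-diagonal, anti-symmetric, and symmetric-residual contributions will then produce an estimate of the form \eqref{bound:Q1}; the precise numerical coefficients $10$ and $6(6p-5)$ should emerge from a slightly looser but uniform tally of the three pieces.
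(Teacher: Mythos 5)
Your proposal is correct, and its skeleton coincides with the paper's proof: the same block decomposition of $\dist^2(\bQ_1,\mQ_{\bq}^1)$ into off-diagonal blocks plus per-block minima, the same orthogonal split of $\bQ_{h_ih_i}$ into symmetric and skew parts with the skew part controlled by \eqref{bound:diag-blk}, and the same use of $\dist(\bQ,\mQ_{\bq})<1$ to force the eigenvalues of the symmetric part to have the signs prescribed by $\bq_i$ (you phrase this via Weyl's inequality against the nearest point of $\mQ_{\bq}$; the paper runs the identical perturbation bound as a contradiction argument — note only that the nearest point exists because $\mQ_{\bq}$ is compact, and that you only need the ``$\le$'' direction of your Hoffman--Wielandt identity, obtained by diagonalizing $S_i$ with sorted eigenvalues). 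Where you genuinely depart from the paper is the endgame. After reaching $\sum_k(1-\mu_{i,k}^2)^2=\|S_i^2-\bI_{h_i}\|_F^2$, the paper re-expands this in terms of $\|\bI_{h_i}-\bQ_{h_ih_i}^T\bQ_{h_ih_i}\|_F^2$ plus skew corrections and then applies a Cauchy--Schwarz step across blocks, which is where the factor $2(p-1)$ and ultimately the constant $6(6p-5)$ come from. You instead use the scalar inequality $(1-\mu^2)^2\le 1-\mu^2$ together with the trace identity from $\bQ_1^T\bQ_1+\bQ_3^T\bQ_3=\bI_r$, reducing the symmetric residual directly to $\|\bQ_3\|_F^2+\sum_{i\ne j}\|\bQ_{h_ih_j}\|_F^2+\sum_i\|A_i\|_F^2$. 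Tallying with \eqref{bound:Q2-3}, \eqref{bound:off-diag-blk}, and \eqref{bound:diag-blk}, your route gives the sharper, $p$-independent bound $\dist^2(\bQ_1,\mQ_{\bq}^1)\le a_r^{-2}\bigl(\tfrac{3}{2}+12\,(\min_{i\ne j}\delta_{ij}^2)^{-1}\bigr)\|R(\bQ)\|_F^2$, which implies \eqref{bound:Q1} since $12\le 6(6p-5)$ for $p\ge 2$ and the off-diagonal term is vacuous when $p=1$; so there is no need to ``loosen'' anything to recover the paper's coefficients, and your version in fact improves the error-bound constant $\kappa$ in Theorem~\ref{lem:EB}.
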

\begin{proof}
Based on the block structure of $\bQ_1$ in~\eqref{block-Q} and the definition of $\mQ_{\bq}^1$ in~\eqref{eq:calQ-1}, we have
\begin{align}\label{eq-1:lem-EB}
\dist^2(\bQ_1,\mQ_{\bq}^1) = \sum_{i=1}^p \sum_{j\neq i}\|\bQ_{h_ih_j}\|_F^2 + \sum_{i=1}^p \min_{\bU_i\in \mO^{h_i}} \|\bQ_{h_ih_i}-\bU_i\diag(\bq_i)\bU_i^T\|_F^2.
\end{align}
The first term on the right-hand side of~\eqref{eq-1:lem-EB} can be bounded using~\eqref{bound:off-diag-blk}. Thus, it suffices to bound the second term. Towards that end, let $\tbQ_{h_ih_i} = (\bQ_{h_ih_i}+\bQ_{h_ih_i}^T)/2 \in \S^{h_i}$ for $i = 1,\dots,p$. Since $\bQ_{h_ih_i} - \tbQ_{h_ih_i}$ is skew-symmetric and $\tbQ_{h_ih_i}-\bU_i\diag(\bq_i)\bU_i^T$ is symmetric, we have
\[ \langle \bQ_{h_ih_i} - \tbQ_{h_ih_i}, \tbQ_{h_ih_i}-\bU_i\diag(\bq_i)\bU_i^T \rangle = 0. \]
This, together with~\eqref{bound:diag-blk}, implies that
\begin{align}
& \sum_{i=1}^p \min_{\bU_i\in \mO^{h_i}} \|\bQ_{h_ih_i}-\bU_i\diag(\bq_i)\bU_i^T\|_F^2 \nonumber \\
=& \ \sum_{i=1}^p \|\bQ_{h_ih_i} - \tbQ_{h_ih_i}\|_F^2  +  \sum_{i=1}^p \min_{\bU_i\in \mO^{h_i}} \|\tbQ_{h_ih_i} -\bU_i\diag(\bq_i)\bU_i^T\|_F^2 \nonumber \\
=& \ \frac{1}{4}\sum_{i=1}^p \| \bQ_{h_ih_i}-\bQ_{h_ih_i}^T \|_F^2 + \sum_{i=1}^p \min_{\bU_i\in \mO^{h_i}} \|\tbQ_{h_ih_i} -\bU_i\diag(\bq_i)\bU_i^T\|_F^2  \nonumber \\
\le& \ \frac{1}{4a_r^2} \| R(\bQ) \|_F^2 + \sum_{i=1}^p \min_{\bU_i\in \mO^{h_i}} \|\tbQ_{h_ih_i} -\bU_i\diag(\bq_i)\bU_i^T\|_F^2. \label{eq-2:lem-EB}
\end{align} 

Now, for $i=1,\ldots,p$, let $\tbQ_{h_ih_i} = \bU_{h_i}\Labd_{h_i}\bU_{h_i}^T$ be an eigen-decomposition of $\tbQ_{h_ih_i}\in \S^{h_i}$, where $\Labd_{h_i}=\diag(\lambda^{(h_i)}_{1},\dots,\lambda^{(h_i)}_{h_i})$ with $\lambda^{(h_i)}_{1} \ge \dots\ge \lambda^{(h_i)}_{h_i}$ being the eigenvalues of $\tbQ_{h_ih_i}$. Since $\|\bQ_{h_ih_i}\| \le \|\bQ_1\| \le \|\bQ\| \le 1$ for $i=1,\ldots,p$, we have $|\lambda^{(h_i)}_{k}| \le 1$ for $i = 1,\dots,p$ and $k = 1,\dots,h_i$. We claim that for $i=1,\ldots,p$ and $k=1,\ldots,h_i$, the $k$-th largest eigenvalue of $\tbQ_{h_ih_i}$ has the same sign as the $k$-th largest eigenvalue of $\bU_i\diag(\bq_i)\bU_i^T$. Indeed, if the claim is not true for some $i \in \{1,\ldots,p\}$ and $k\in\{1,\ldots,h_i\}$, then for any $\bU_i\in \mO^{h_i}$, we have
\begin{align*}
\|\tbQ_{h_ih_i} -\bU_i\diag(\bq_i)\bU_i^T\|_F^2 & \ge \|\tbQ_{h_ih_i} -\bU_i\diag(\bq_i)\bU_i^T\|^2 \\
&\ge \left| \lambda^{(h_i)}_{k} - \lambda_k(\bU_i\diag(\bq_i)\bU_i^T) \right| \\
& \ge 1,
\end{align*}
where the second inequality follows from classic perturbation results for eigenvalues of symmetric matrices (see, e.g.,~\cite[Corollary 4.10]{SS90}) and the third follows the fact that the sign of $\lambda_k(\bU_i\diag(\bq_i)\bU_i^T)$ with $\bq_i\in\{\pm 1\}^{h_i}$ is different from that of $\lambda_k^{(h_i)}$. This implies that $\dist(\bQ,\mQ_{\bq}) \ge 1$, which contradicts our assumption that $\dist(\bQ,\mQ_{\bq}) < 1$.

Using the above claim, we can bound
\begin{align}\label{eq-6:lem-EB}
& \min_{\bU_i\in \mO^{h_i}}\|\tbQ_{h_ih_i} - \bU_i\diag(\bq_i)\bU_i^T\|_F^2 \le \sum_{k=1}^{h_i} \left( 1-|\lambda_k^{(h_i)}| \right)^2 \notag \\
\le& \  \sum_{k=1}^{h_i} \left( 1-|\lambda_k^{(h_i)}| \right)^2 \left( 1+|\lambda_k^{(h_i)}| \right)^2  = \| \bI_{h_i} - \tbQ_{h_ih_i}^T\tbQ_{h_ih_i}\|_F^2
\end{align}
for $i=1,\ldots,p$. Let us turn to bound $\sum_{i=1}^p \| \bI_{h_i} - \tbQ_{h_ih_i}^T\tbQ_{h_ih_i}\|_F^2$. Observe that with $\Delta_{h_ih_i}=\bQ_{h_ih_i}-\bQ_{h_ih_i}^T$ for $i=1,\ldots,p$, we have
\begin{align}
& \sum_{i=1}^p\| \bI_{h_i} - \tbQ_{h_ih_i}^T\tbQ_{h_ih_i}\|_F^2 \notag\\
=&  \  \sum_{i=1}^p \left \| \bI_{h_i} - \left( \bQ_{h_ih_i}^T + \frac{1}{2}\Delta_{h_ih_i}  \right) \left( \bQ_{h_ih_i} -  \frac{1}{2}\Delta_{h_ih_i}  \right)    \right\|_F^2 \notag \\
\le& \  3 \sum_{i=1}^p \left(  \|\bI_{h_i} - \bQ_{h_ih_i}^T\bQ_{h_ih_i} \|_F^2 + \| \bQ_{h_ih_i}^T\Delta_{h_ih_i} \|_F^2 + \frac{1}{16} \| \Delta_{h_ih_i}^2 \|_F^2  \right )\notag \\
\le& \ 3  \sum_{i=1}^p \|\bI_{h_i} - \bQ_{h_ih_i}^T\bQ_{h_ih_i} \|_F^2  + \frac{15}{4}\sum_{i=1}^p \| \Delta_{h_ih_i} \|_F^2 \notag \\
\le& \ 3  \sum_{i=1}^p \|\bI_{h_i} - \bQ_{h_ih_i}^T\bQ_{h_ih_i} \|_F^2  + \frac{15}{4a_r^2} \| R(\bQ) \|_F^2, \label{eq-7:lem-EB}
\end{align}
where the second-to-last inequality follows from the fact that $\|\bQ_{h_ih_i}\| \le 1$ and $\|\Delta_{h_ih_i}\| \le 2\|\bQ_{h_ih_i}\|\le 2$, and the last inequality follows from~\eqref{bound:diag-blk}. Continuing, we bound
\begin{align}
& \sum_{j=1}^p \| \bI_{h_j} - \bQ_{h_jh_j}^T\bQ_{h_jh_j}\|_F^2 \notag \\
=& \ \sum_{j=1}^p \left\| \bI_{h_j} - \sum_{i=1}^p \bQ_{h_ih_j}^T\bQ_{h_ih_j} + \sum_{i\neq j} \bQ_{h_ih_j}^T\bQ_{h_ih_j} \right\|_F^2 \notag \\
\le& \ 2 \sum_{j=1}^p \left\| \bI_{h_j} - \sum_{i=1}^p \bQ_{h_ih_j}^T\bQ_{h_ih_j}  \right\|_F^2 + 2  \sum_{j=1}^p \left \| \sum_{i\neq j} \bQ_{h_ih_j}^T\bQ_{h_ih_j} \right\|_F^2 \notag \\
\le& \ 2\|\bI_r - \bQ_1^T\bQ_1\|_F^2 + 2(p-1) \sum_{j=1}^p \sum_{i\neq j}  \| \bQ_{h_ih_j}^T\bQ_{h_ih_j} \|_F^2 \notag \\
\le& \ 2 \|\bQ_3\|_F^2 + 2(p-1) \sum_{i=1}^p \sum_{j \neq i} \| \bQ_{h_ih_j}\|_F^2, \label{eq-8:lem-EB}
\end{align}
where the second inequality follows from the fact that $\{ \bI_{h_j} - \sum_{i=1}^p \bQ_{h_ih_j}^T\bQ_{h_ih_j} \}_{j=1}^p$ are the diagonal blocks of $\bI_r - \bQ_1^T\bQ_1$ and the last is due to $\bQ_1^T\bQ_1 + \bQ_3^T\bQ_3 = \bI_r$, $\|\bQ_3\| \le 1$, and $\| \bQ_{h_ih_j}\| \le 1$ for $i,j\in\{1,\ldots,p\}$.

Upon putting~\eqref{eq-1:lem-EB}--\eqref{eq-8:lem-EB} together and invoking~\eqref{bound:Q2-3} and \eqref{bound:off-diag-blk}, we obtain \eqref{bound:Q1}. This completes the proof.
\end{proof}

We now have all the ingredients to finish the proof of Theorem~\ref{lem:EB}.
\begin{proof}[Proof of Theorem~\ref{lem:EB}]
Let $\bq \in \{\pm1\}^r$ be given. Using~\eqref{eq:EB-decomp} and the results in Propositions~\ref{prop:blk2-4} and \ref{prop:blk1}, we get
\[ 
\dist^2(\bQ,\mQ_{\bq})  \le \frac{1}{a_r^2} \left( 13 + 6(6p-5)  \left( \min_{i,j\in\{1,\ldots,p\} \atop i\not=j} \delta_{ij}^2 \right)^{-1} \right)\|R(\bQ)\|_F^2
\]
for any $\bQ \in {\rm St}(d,K)$ satisfying $\dist(\bQ,\mQ_{\bq}) < 1$. This implies~\eqref{EB:LP-OC}, as desired.
\end{proof}

\subsubsection{From Error Bound to K\L\ Exponent}

Once we have the local error bound~\eqref{EB:LP-OC}, it is rather straightforward to determine the K\L\ exponent at the limiting critical points of Problem \eqref{LP-OC}. We remark that although there are works showing how various error bounds can be used to determine the K\L\ exponent for a host of optimization problems (see, e.g.,~\cite{BNPS17,li2017calculus,Liu2018}), they do not cover our problem setting and hence the results therein cannot be applied directly.

\begin{proof}[Proof of Theorem~\ref{thm:KL-LO-OC}]
Let $\bQ \in {\rm St}(d,K)$ and  $\bQ^* \in \mQ$ be such that $\|\bQ - \bQ^*\|_F < 1$. Furthermore, let $\tbQ^* \in \mQ$ be such that $\dist(\bQ,\mQ)=\|\bQ - \tbQ^*\|_F$. Clearly, we have $\|\bQ - \tbQ^*\|_F < 1$. We claim that $\bQ^*,\tbQ^*\in \bar{\mQ}_{\bq}$ for some $\bq \in \{\pm 1\}^r$. Indeed, if this is not the case, then we have $\bQ^*\in \bar{\mQ}_{\bq}$ and $\tbQ^*\in \bar{\mQ}_{\tbq}$ for some $\bq,\tbq\in\{\pm1\}^r$ with $\bq \not= \tbq$. Since $\dist(\bar{\mQ}_{\bq},\bar{\mQ}_{\tbq}) = \dist(\mQ_{\bq},\mQ_{\tbq})$ (recall the definition of~$\bar{\mQ}_{\bq}$ in Theorem~\ref{lem:EB}), Proposition~\ref{prop:dist-Q} implies that $\|\bQ^* - \tbQ^*\|_F \ge 2$. However, our assumption gives $\| \bQ^*-\tbQ^*\|_F \le  \|\bQ - \bQ^*\|_F + \|\bQ - \tbQ^*\|_F < 2$, which is a contradiction. This establishes the claim.

Next, we claim that $g$ is constant on $\bar{\mQ}_{\bq}$. Indeed, for any $\bW \in \bar{\mQ}_{\bq}$, we have
\[ g(\bW) = \langle \bA, \bW \rangle = \langle \bSigma_{\bA}, \bU_{\bA}^T\bW\bV_{\bA} \rangle = \sum_{i=1}^p \sum_{j=1}^{h_i} a_{s_i}q_{ij}, \]
where the first equality is due to the fact that $\bW \in {\rm St}(d,K)$; the second inequality uses the SVD of $\bA$ in~\eqref{eq:A-svd}; the third inequality follows from the definition of $\bSigma_{\bA}$ (cf.~\eqref{block-A} and \eqref{block-tA}), the fact that $\bU_{\bA}^T\bW\bV_{\bA} \in \mQ_{\bq}$, and the definition of $\mQ_{\bq}$ in~\eqref{set:Q-p}. Upon noting that the rightmost expression does not depend on $\bW$, the claim is established. In particular, we obtain $g(\bQ^*) = g(\tbQ^*)$.

Since $\tbQ^* \in \mQ$, we have $\bA = \tbQ^*\bA^T\tbQ^*$ by Proposition~\ref{prop:subd-G}, which implies that $\bA = \tbQ^*(\tbQ^*)^T\bA$ (see~\eqref{eq-3:lem-crit-LP-OC}). It follows that
\begin{align*}
g(\bQ)-g(\tbQ^*) &= \langle \bA, \bQ-\tbQ^* \rangle = \langle \tbQ^* \bA^T \tbQ^*, \bQ \rangle - \langle \bA, \tbQ^* \rangle \\
&= \langle \bA^T \tbQ^*, (\tbQ^*)^T \bQ - \bI_K \rangle, \\
g(\bQ)-g(\tbQ^*) &= \langle \bA, \bQ-\tbQ^* \rangle = \langle \tbQ^* (\tbQ^*)^T \bA, \bQ \rangle - \langle \bA, \tbQ^* \rangle \\
&= \langle \bA^T \tbQ^*, \bQ^T\tbQ^* - \bI_K \rangle.
\end{align*}
Summing the above two equalities yields
\begin{align*}
|g(\bQ)-g(\tbQ^*)| &= \frac{1}{2} \left| \langle \bA^T \tbQ^*, (\tbQ^*)^T \bQ + \bQ^T\tbQ^* - 2\bI_K \rangle \right| \\
&= \frac{1}{2} \left| \langle \bA^T \tbQ^*, (\bQ - \tbQ^*)^T (\bQ - \tbQ^*) \rangle \right| \\
&\le \frac{1}{2} \|\bA\| \cdot \|\bQ - \tbQ^*\|_F^2 \\
&\le \frac{1}{2} \kappa^2 \|\bA\| \cdot \|R(\bQ)\|_F^2 \\
&\le 2\kappa^2\|\bA\| \cdot \dist^2(\b0, \partial g(\bQ)),
\end{align*}
where the first inequality follows from the Cauchy-Schwarz inequality and the fact that $\|\tbQ^*\| \le 1$; the second inequality follows from the assumption that $\dist(\bQ,\bar{\mQ}_{\bq})=\|\bQ - \tbQ^*\|_F$ and Theorem~\ref{lem:EB}; the last inequality follows from Proposition~\ref{prop:subd-G}. Recalling that $g(\bQ^*) = g(\tbQ^*)$, we establish Theorem~\ref{thm:KL-LO-OC} with $\eta_g = (2\kappa^2\|\bA\|)^{-1/2}$ and $\epsilon_g<1$.
\end{proof}

\subsection{Completing the Proof} \label{subsec:pf-thm-1}

We are now ready to achieve our original goal of characterizing the K\L\ exponent for Problems~\eqref{L1-PCA-1} and \eqref{eq:L1-PCA-2b}.
\begin{proof}[Proof of Theorem~\ref{thm:KL-L1-PCA}]
Recall that the objective function $\ell$ of Problem~\eqref{L1-PCA-1} takes the form $\ell(\bQ) = \min_{i\in\{1,\ldots,2^{nK}\}} \ell_i(\bQ)$, where $\ell_i(\bQ) = \langle \bX\bP_i,\bQ \rangle + \delta_{{\rm St}(d,K)}(\bQ)$. For any $\bQ \in {\rm St}(d,K)$, let 
\[ \mI(\bQ) = \left\{ i \in \{1,\ldots,2^{nK} \} : \ell(\bQ) = \ell_i(\bQ) \right\} \]
denote the set of active indices of $\ell$ at $\bQ$. Furthermore, let $\bQ^* \in {\rm St}(d,K)$ be a limiting critical point of $\ell$. By definition, we have $\min_{i \not\in \mI(\bQ^*)} \ell_i(\bQ^*) > \ell(\bQ^*)$. Thus, there exists an $\epsilon>0$ such that for all $\bQ \in {\rm St}(d,K)$ with $\|\bQ-\bQ^*\|_F \le \epsilon$, we have $\min_{i \not\in \mI(\bQ^*)} \ell_i(\bQ) > \ell(\bQ)$; i.e., $\mI(\bQ) \subseteq \mI(\bQ^*)$. By adapting the proof of~\cite[Theorem 3.1]{li2017calculus} and invoking Theorem~\ref{thm:KL-LO-OC}, we conclude that
\[ \dist(\b0, \partial \ell(\bQ)) \ge \eta_{\ell} | \ell(\bQ) - \ell(\bQ^*) |^{1/2} \]
for all $\bQ \in {\rm St}(d,K)$ with $\| \bQ-\bQ^* \|_F \le \epsilon_{\ell}$, where $\epsilon_{\ell} < \min\{ \epsilon,1 \}$, $\eta_{\ell} = \min_{i \in \mI(\bQ^*)} \eta_{\ell_i}$, and $\eta_{\ell_i}$ is the constant obtained from Theorem~\ref{thm:KL-LO-OC} by taking $g=\ell_i$. This establishes~\ref{exponet-1}.

Next, recall that the objective function $h$ of Problem~\eqref{eq:L1-PCA-2b} takes the form $h(\bP,\bQ) = -\langle \bP, \bX^T \bQ \rangle + \delta_{\mB(n,K)}(\bP) + \delta_{{\rm St}(d,K)}(\bQ)$. Let $\bZ^*=(\bP^*,\bQ^*) \in \mB(n,K)\times{\rm St}(d,K)$ be a limiting critical point of $h$. Furthermore, let $\bZ = (\bP,\bQ) \in \mB(n,K)\times{\rm St}(d,K)$ be such that $\|\bZ - \bZ^*\|_F \le \epsilon_h$ with $\epsilon_h<1$. Since $\| \bP-\bP^* \|_F \le \| \bZ - \bZ^* \|_F < 1$ and $\bP,\bP^* \in \mB(n,K)$, we have $\bP=\bP^*$. Moreover, we have
\be \label{eq:h-subdiff}
\partial h(\bP,\bQ) = \left\{ -\bX^T\bQ + \mN_{\mB(n,K)}(\bP) \right\} \times \left\{ -\bX\bP + \mN_{{\rm St}(d,K)}(\bQ) \right\}
\ee
by~\cite[Proposition 2.1]{attouch2010proximal}. This, together with the fact that $(\b0,\b0) \in \partial h(\bP^*,\bQ^*)$, implies that $\b0 \in -\bX\bP^* + \mN_{{\rm St}(d,K)}(\bQ^*)$; i.e., $\bQ^*$ is a limiting critical point of the function $\bQ \mapsto h(\bP^*,\bQ)$. Hence, by Theorem~\ref{thm:KL-LO-OC}, there exists an $\eta_h>0$ such that 
\begin{align*}
\eta_h |h(\bZ) - h(\bZ^*)|^{1/2} &= \eta_h \left| -\langle \bX\bP^*, \bQ \rangle + \langle \bX\bP^*, \bQ^* \rangle \right|^{1/2} \\
&\le \dist(\b0,\partial h(\bP^*,\bQ)) = \dist(\b0,\partial h(\bZ)).
\end{align*}
This establishes~\eqref{exponet-2}.
\end{proof}

\section{Convergence Analysis of PAMe}\label{sec:pf-thm-2}

Our goal in this section is to prove Theorem~\ref{thm:liner-conv}, which concerns the convergence behavior of our proposed method PAMe (Algorithm \ref{alg:PAMe}). Towards that end, we first combine the characterization of the K\L\ exponent for Problem~\eqref{eq:L1-PCA-2b} in Theorem~\ref{thm:KL-L1-PCA} with the abstract convergence results for descent methods in~\cite{attouch2010proximal,attouch2013convergence} to establish the linear convergence of PAMe to a limiting critical point $(\bP^*,\bQ^*)$ of Problem~\eqref{eq:L1-PCA-2b}. Then, by noting that $(\bP^*,\bQ^*)$ is a solution to certain generalized equation, we obtain a sufficient condition for $\bQ^*$ to be a critical point of Problem~\eqref{L1-PCA-1}.

\subsection{Basic Properties of PAMe}

To study the convergence behavior of PAMe using the analysis framework developed in~\cite{attouch2010proximal,attouch2013convergence}, a key first step is to show that the iterates $\{(\bP^k,\bQ^k)\}_{k\ge0}$ generated by PAMe achieve \emph{sufficient decrease} and satisfy a \emph{relative error} (also referred to as \emph{safeguard} in~\cite{SU15,LYS17,Liu2018}) condition with respect to some potential function. One immediate choice of the potential function is the objective function $h$ of Problem~\eqref{eq:L1-PCA-2b} itself. However, due to the extrapolation step in line~\ref{line:extrap} of Algorithm~\ref{alg:PAMe}, it is not clear whether the sequence $\{h(\bP^k,\bQ^k)\}_{k\ge0}$ satisfies the sufficient decrease and relative error conditions. To circumvent this difficulty, let $\beta\ge0$ be a parameter and consider the potential function $\Psi_{\beta}: \R^{n\times K} \times \R^{d\times K} \times \R^{d\times K} \rightarrow (-\infty,+\infty]$ given by
\begin{align}\label{eq:auxi}
\Psi_{\beta}(\bP,\bQ,\bQ^\prime) = h(\bP,\bQ) + \frac{\beta}{2} \|\bQ - \bQ^\prime\|_F^2.
\end{align}
We note that similar potential functions have previously been used in the convergence analysis of iterative methods with inertial terms/extrapolation steps; see, e.g.,~\cite{pock2016inertial,wen2018proximal,lu2019enhanced,hien2020inertial}. The following result shows that if the step sizes and extrapolation parameters in PAMe are suitably chosen, then there exists a $\beta>0$ such that the sequence $\{\Psi_{\beta}(\bP^k,\bQ^k,\bQ^{k-1})\}_{k\ge0}$ satisfies the two conditions mentioned earlier.
\begin{prop}\label{lem:algo}
Consider the setting of Theorem~\ref{thm:liner-conv}. Let $\bC^k=(\bP^k,\bQ^k,\bQ^{k-1})$ for $k\ge0$. Then, the following hold (recall that $\beta_*$ is given in Theorem~\ref{thm:liner-conv}):
\begin{enumerate}
\item[(a)] The sequence $\{\bC^k\}_{k\ge0}$ is bounded.

\item[(b)] There exists a constant $\kappa_1>0$ such that for $k\ge0$, 
\[ 
\Psi_{\beta_*}(\bC^{k+1}) - \Psi_{\beta_*}(\bC^k) \le  -\kappa_1 \| \bC^{k+1} - \bC^k \|_F^2. 
\] 

\item[(c)] There exists a constant $\kappa_2>0$ such that for $k\ge 0$,
\[ 
\dist(\b0,\partial \Psi_{\beta_*}(\bC^{k+1})) \le \kappa_2 \|\bC^{k+1}-\bC^k\|_F. 
\] 
\end{enumerate}
\end{prop}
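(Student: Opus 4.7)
The plan is to establish parts (a), (b), (c) in order, where (b) is the technical core.

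Part (a) is immediate: $\bP^k$ lies in the finite set $\mB(n,K)$ and $\bQ^{k-1},\bQ^k$ lie in the compact set ${\rm St}(d,K)$, so $\{\bC^k\}_{k\ge 0}$ is bounded.

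For part (b), the approach is to add the two one-sided descent inequalities coming from the optimality of the $\bP$- and $\bQ$-updates, then reorganise them in terms of $\Psi_{\beta_*}$. From the optimality of $\bP^{k+1}$ for its quadratic-plus-linear subproblem (and that the quadratic prox term vanishes at $\bP^k$), I would get
\[
h(\bP^{k+1},\bQ^k) - h(\bP^k,\bQ^k) \le -\tfrac{\alpha_k}{2}\|\bP^{k+1}-\bP^k\|_F^2 + \gamma_k\langle \bP^{k+1}-\bP^k,\bX^T(\bQ^k-\bQ^{k-1})\rangle,
\]
where the cross term is precisely the discrepancy between the $\bP$-update using $\bE^k$ instead of $\bQ^k$, using $\bE^k-\bQ^k=\gamma_k(\bQ^k-\bQ^{k-1})$. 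I would control this cross term by Young's inequality with a free parameter $\theta>0$, splitting it between $\|\bP^{k+1}-\bP^k\|_F^2$ and $\|\bQ^k-\bQ^{k-1}\|_F^2$. The analogous argument on the $\bQ$-subproblem yields $h(\bP^{k+1},\bQ^{k+1})-h(\bP^{k+1},\bQ^k)\le -\tfrac{\beta_k}{2}\|\bQ^{k+1}-\bQ^k\|_F^2$. Adding these two and using $\Psi_{\beta_*}(\bC^{k+1})-\Psi_{\beta_*}(\bC^k) = h(\bP^{k+1},\bQ^{k+1}) - h(\bP^k,\bQ^k) + \tfrac{\beta_*}{2}\|\bQ^{k+1}-\bQ^k\|_F^2 - \tfrac{\beta_*}{2}\|\bQ^k-\bQ^{k-1}\|_F^2$ produces an upper bound of the form $-c_1\|\bP^{k+1}-\bP^k\|_F^2 - c_2\|\bQ^{k+1}-\bQ^k\|_F^2 - c_3\|\bQ^k-\bQ^{k-1}\|_F^2$. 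The requirement $\beta_k\ge 3\beta_*/2$ gives $c_2\ge \beta_*/4$; the positivity of $c_1$ and $c_3$ reduces, after choosing $\theta$ suitably, to $\gamma_k^2\|\bX\|^2<\alpha_k\beta_*$, which is exactly what $\gamma_k<\gamma^*=\min\{1,\alpha_*\beta_*/(2\|\bX\|^2)\}$ guarantees (a quick case split on which term attains the minimum verifies it with strict margin). Choosing $\theta$ as a constant independent of $k$ then gives a uniform $\kappa_1>0$. The main obstacle here is picking $\theta$ so that all three inequalities $c_1,c_2,c_3\ge \kappa_1$ hold uniformly; the step-size assumptions are calibrated precisely to make this possible.

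Part (c) exploits a clean structural feature of the problem. Since $\mB(n,K)=\{\pm 1\}^{n\times K}$ is discrete, each of its points is isolated, so the Fr\'echet (and hence limiting) normal cone satisfies $\mN_{\mB(n,K)}(\bP^{k+1})=\R^{n\times K}$. By the product structure in~\eqref{eq:h-subdiff}, the $\bP$-block of $\partial \Psi_{\beta_*}(\bC^{k+1})$ is therefore all of $\R^{n\times K}$, and may be taken to be $\b0$. In the $\bQ$-block, the optimality condition of the $\bQ$-update gives $\bX\bP^{k+1}-\beta_k(\bQ^{k+1}-\bQ^k)\in \mN_{{\rm St}(d,K)}(\bQ^{k+1})$; inserting this particular normal vector into $-\bX\bP^{k+1}+\mN_{{\rm St}(d,K)}(\bQ^{k+1})+\beta_*(\bQ^{k+1}-\bQ^k)$ produces the subgradient element $(\beta_*-\beta_k)(\bQ^{k+1}-\bQ^k)$, of norm at most $\beta^*\|\bQ^{k+1}-\bQ^k\|_F$. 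The third block is smooth and contributes $-\beta_*(\bQ^{k+1}-\bQ^k)$. Stacking these three pieces yields an element of $\partial\Psi_{\beta_*}(\bC^{k+1})$ of Frobenius norm at most a constant depending only on $\beta_*,\beta^*$ times $\|\bQ^{k+1}-\bQ^k\|_F\le \|\bC^{k+1}-\bC^k\|_F$, which is the desired safeguard bound with $\kappa_2=\sqrt{(\beta^*)^2+\beta_*^2}$.
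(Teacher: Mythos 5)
Your proposal is correct. Parts (a) and (b) follow essentially the same route as the paper: the two subproblem optimality inequalities, isolation of the extrapolation cross term $\gamma_k\langle \bP^{k+1}-\bP^k,\bX^T(\bQ^k-\bQ^{k-1})\rangle$, and a Young-type split; the paper fixes the Young parameter to $\alpha_k$ (yielding coefficients $\tfrac{\alpha_k}{2}(1-\gamma_k)$, $\tfrac{1}{2}(\beta_k-\beta_*)$, $\tfrac{1}{2}(\beta_*-\gamma_k\|\bX\|^2/\alpha_k)$ and $\kappa_1=\min\{\alpha_*(1-\gamma^*)/2,\beta_*/4\}$), whereas your free parameter $\theta$ chosen in $(\alpha_*/2,\alpha_*)$ gives the same conclusion and in fact yields a uniform $\kappa_1$ even in the corner case $\gamma^*=1$, where the paper's stated constant degenerates—a small robustness gain. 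Part (c) is where you genuinely diverge: the paper inserts the normal-cone element supplied by the $\bP$-update inclusion $\b0\in-\bX^T\bE^k+\alpha_k(\bP^{k+1}-\bP^k)+\mN_{\mB(n,K)}(\bP^{k+1})$ into the $\bP$-block and then expands $\bE^k-\bQ^{k+1}$, arriving at $\kappa_2=\bigl(\max\{3{\alpha^*}^2,(\beta^*-\beta_*)^2+\beta_*^2+3\|\bX\|^2\}\bigr)^{1/2}$; you instead observe that every point of $\mB(n,K)$ is isolated, so $\mN_{\mB(n,K)}(\bP^{k+1})=\R^{n\times K}$ and the $\bP$-block of $\partial\Psi_{\beta_*}(\bC^{k+1})$ already contains $\b0$, leaving only the $\bQ$-block element $(\beta_*-\beta_k)(\bQ^{k+1}-\bQ^k)$ and the smooth third block, with $\kappa_2=\sqrt{(\beta^*)^2+\beta_*^2}$. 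This is valid (it is exactly the product structure the paper itself states in its subdifferential formula for $h$ and $\Psi_{\beta_*}$) and is both shorter and quantitatively sharper, since your constant does not involve $\alpha^*$, $\gamma_k$, or $\|\bX\|$; what the paper's longer argument buys is independence from the discreteness of the first block, so it would carry over verbatim if $\mB(n,K)$ were replaced by a non-discrete constraint set, where the $\bP$-block can no longer be dismissed for free.
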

\begin{proof}
The proof of (a) is immediate, as $\bC^k \in \mB(n,K) \times {\rm St}(d,K) \times {\rm St}(d,K)$ for $k\ge0$ and both $\mB(n,K)$ and ${\rm St}(d,K)$ are bounded.

To prove (b), we first observe from the updates \eqref{update-P} and \eqref{update-Q} that
\begin{align*}
-\langle \bP^{k+1}, \bX^T\bE^k \rangle + \langle \bP^k, \bX^T\bE^k \rangle &\le -\frac{\alpha_k}{2}\|\bP^{k+1}-\bP^k\|_F^2, \\
-\langle \bP^{k+1}, \bX^T \bQ^{k+1} \rangle + \langle \bP^{k+1}, \bX^T \bQ^k \rangle &\le -\frac{\beta_k}{2}\|\bQ^{k+1}-\bQ^k\|_F^2.
\end{align*}
Let $\bm{\Delta}_{\bP}^{k+1}=\bP^{k+1}-\bP^k$ and $\bm{\Delta}_{\bQ}^{k+1}=\bQ^{k+1}-\bQ^k$ for $k\ge0$. Since $\bE^k = \bQ^k+\gamma_k(\bQ^k-\bQ^{k-1})$, it follows that
\begin{align*}
&\ -\langle \bP^{k+1}, \bX^T \bQ^{k+1} \rangle + \langle \bP^k, \bX^T \bQ^k \rangle \\
\le&\ -\frac{\alpha_k}{2}\|\bP^{k+1}-\bP^k\|_F^2-\frac{\beta_k}{2}\|\bQ^{k+1}-\bQ^k\|_F^2+\gamma_k\langle \bP^{k+1}-\bP^k,\bX^T(\bQ^k-\bQ^{k-1}) \rangle  \\
\le&\ -\frac{\alpha_k}{2}\|\bm{\Delta}_{\bP}^{k+1}\|_F^2-\frac{\beta_k}{2}\|\bm{\Delta}_{\bQ}^{k+1}\|_F^2 + \frac{\gamma_k\alpha_k}{2}\|\bm{\Delta}_{\bP}^{k+1}\|_F^2 + \frac{\gamma_k}{2\alpha_k}\|\bX^T\bm{\Delta}_{\bQ}^{k}\|_F^2 \\ 
\le&\ -\frac{\alpha_k}{2} \left( 1 - \gamma_k \right)\|\bm{\Delta}_{\bP}^{k+1}\|_F^2 -\frac{\beta_k}{2}\|\bm{\Delta}_{\bQ}^{k+1}\|_F^2 + \frac{\gamma_k\|\bX\|^2}{2\alpha_k} \|\bm{\Delta}_{\bQ}^{k}\|_F^2,
\end{align*}
where the second inequality uses the fact that $2\langle \bA,\bB \rangle \le \rho\|\bA\|_F^2 + \|\bB\|_F^2/\rho$ for any $\rho > 0$. Now, by letting $\kappa_1=\min\left\{ \alpha_*( 1 - \gamma^*)/2, \beta_*/4 \right\} > 0$, we obtain
\begin{align*}
&\ \Psi_{\beta_*}(\bC^{k+1}) - \Psi_{\beta_*}(\bC^k) \\
=&\ h(\bP^{k+1},\bQ^{k+1}) + \frac{\beta_*}{2} \|\bQ^{k+1} - \bQ^k \|_F^2  - h(\bP^{k},\bQ^k) - \frac{\beta_*}{2} \|\bQ^{k} - \bQ^{k-1} \|_F^2 \\
\le&\  -\frac{\alpha_k}{2} ( 1 - \gamma_k ) \|\bm{\Delta}_{\bP}^{k+1}\|_F^2 - \frac{1}{2} ( \beta_k - \beta_* ) \|\bm{\Delta}_{\bQ}^{k+1} \|_F^2 - \frac{1}{2}\left(\beta_*  - \frac{\gamma_k\|\bX\|^2}{\alpha_k} \right) \|\bm{\Delta}_{\bQ}^{k}\|_F^2 \\
\le &\ -\kappa_1 \left( \|\bm{\Delta}_{\bP}^{k+1}\|_F^2 + \|\bm{\Delta}_{\bQ}^{k+1}\|_F^2 + \|\bm{\Delta}_{\bQ}^{k}\|_F^2 \right) = -\kappa_1 \|\bC^{k+1} - \bC^k\|_F^2,
\end{align*}
as desired.

Lastly, let us prove (c). Again, using the updates \eqref{update-P} and \eqref{update-Q}, we have
\begin{align}
\b0 &\in -\bX^T\bE^k + \alpha_k(\bP^{k+1}-\bP^k) + \mN_{\mB(n,K)}(\bP^{k+1}), \label{eq-1:lem-algo} \\
\b0 &\in -\bX\bP^{k+1} + \beta_k(\bQ^{k+1}-\bQ^k) + \mN_{{\rm St}(d,K)}(\bQ^{k+1}). \label{eq-2:lem-algo}
\end{align}
By adapting the proof of \cite[Proposition 2.1]{attouch2010proximal}, we obtain
\begin{align*}
\partial \Psi_{\beta_*}(\bP^{k+1},\bQ^{k+1},\bQ^k) &= \{ -\bX^T\bQ^{k+1} + \mN_{\mB(n,K)}(\bP^{k+1}) \} \\
&\quad \times \{ -\bX\bP^{k+1} + \beta_* ( \bQ^{k+1}-\bQ^k ) + \mN_{{\rm St}(d,K)}(\bQ^{k+1}) \} \\
&\quad \times \{ \beta_*(\bQ^k-\bQ^{k+1})\}.
\end{align*}
This, together with \eqref{eq-1:lem-algo} and \eqref{eq-2:lem-algo}, implies that
\begin{align*}
&\  \dist^2(\b0,\partial \Psi_{\beta_*}(\bP^{k+1},\bQ^{k+1},\bQ^k))  \\
\le& \ \|\bX^T(\bE^k-\bQ^{k+1}) - \alpha_k\bm{\Delta}_{\bP}^{k+1}\|_F^2 + \|(\beta_* - \beta_k)\bm{\Delta}_{\bQ}^{k+1}\|_F^2 +  \|\beta_*\bm{\Delta}_{\bQ}^{k+1}\|_F^2 \\
\le &\ 3\alpha_k^2 \|\bm{\Delta}_{\bP}^{k+1}\|_F^2 + ((\beta_k-\beta_*)^2+\beta_*^2+3\|\bX\|^2)\|\bm{\Delta}_{\bQ}^{k+1}\|_F^2 + 3\gamma_k^2 \|\bX\|^2 \|\bm{\Delta}_{\bQ}^{k}\|_F^2.
\end{align*}
By taking $\kappa_2 = \left( \max\{ 3 {\alpha^*}^2, (\beta^*-\beta_*)^2+\beta_*^2+3\|\bX\|^2 \} \right)^{1/2}$, we obtain
\begin{align*}
&\ \dist^2(\b0,\partial \Psi_{\beta_*}(\bP^{k+1},\bQ^{k+1},\bQ^k)) \\
\le& \ \kappa_2^2 \left( \|\bm{\Delta}_{\bP}^{k+1}\|_F^2 + \|\bm{\Delta}_{\bQ}^{k+1}\|_F^2 + \|\bm{\Delta}_{\bQ}^{k}\|_F^2 \right) = \kappa_2^2 \|\bC^{k+1}-\bC^k\|_F^2,
\end{align*}
which implies the desired result.
\end{proof}

\subsection{Linear Convergence of PAMe and Properties of Limit Points}

Proposition \ref{lem:algo} shows that the sequence $\{\bm{C}^k\}_{k\ge0}$ is bounded and satisfies both the sufficient decrease and relative error conditions with respect to the potential function $\Psi_{\beta}$. Thus, a natural next step is to study the convergence behavior of the sequence $\{\bm{C}^k\}_{k\ge0}$ with respect to the potential function $\Psi_{\beta}$ and then use the result to deduce the convergence behavior of the sequence $\{(\bP^k,\bQ^k)\}_{k\ge0}$ with respect to the objective function $h$ of Problem~\eqref{eq:L1-PCA-2b}. To begin, let us prove two technical lemmas. The first establishes a relationship between the limiting critical points of $h$ and $\Psi_{\beta}$.

\begin{lemma}\label{lem:rela-H-Psi}
Let $\beta>0$ be given. Suppose that $(\bP,\bQ,\bQ') \in \mB(n,K) \times {\rm St}(d,K) \times \R^{d\times K}$ is a limiting critical point of $\Psi_{\beta}$. Then, we have $\bQ=\bQ'$. Moreover, $(\bP,\bQ,\bQ)$ is a limiting critical point of $\Psi_{\beta}$ if and only if $(\bP,\bQ)$ is a limiting critical point of $h$.
\end{lemma}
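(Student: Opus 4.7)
The plan is to compute $\partial \Psi_\beta$ explicitly and then read off both claims componentwise.

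First, I would note that $\Psi_\beta$ has the form of a proper lower semicontinuous function $h(\bP,\bQ)$ (trivially extended in the $\bQ'$ variable) plus the smooth, finite function $(\bP,\bQ,\bQ') \mapsto (\beta/2)\|\bQ-\bQ'\|_F^2$. By the standard sum rule for the limiting subdifferential under a smooth perturbation (see \cite[Exercise 8.8]{RW04}) together with the product rule (which applies because $h$ itself splits as a bilinear smooth term plus the separable indicator $\delta_{\mB(n,K)}(\bP) + \delta_{{\rm St}(d,K)}(\bQ)$, as already used in the proof of Theorem~\ref{thm:KL-L1-PCA} via~\eqref{eq:h-subdiff}), one obtains
\begin{align*}
\partial \Psi_\beta(\bP,\bQ,\bQ') =\ & \left\{ -\bX^T\bQ + \mN_{\mB(n,K)}(\bP) \right\} \\
& \times \left\{ -\bX\bP + \beta(\bQ-\bQ') + \mN_{{\rm St}(d,K)}(\bQ) \right\} \\
& \times \left\{ \beta(\bQ'-\bQ) \right\}
\end{align*}
whenever $(\bP,\bQ) \in \mB(n,K) \times {\rm St}(d,K)$, and $\partial \Psi_\beta(\bP,\bQ,\bQ') = \emptyset$ otherwise.

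With this formula in hand, suppose $(\bP,\bQ,\bQ')$ is a limiting critical point of $\Psi_\beta$, i.e. $(\b0,\b0,\b0) \in \partial \Psi_\beta(\bP,\bQ,\bQ')$. The third block forces $\beta(\bQ'-\bQ) = \b0$, and since $\beta>0$ we conclude $\bQ' = \bQ$. Substituting $\bQ' = \bQ$ back into the first two blocks, the inclusions $\b0 \in -\bX^T\bQ + \mN_{\mB(n,K)}(\bP)$ and $\b0 \in -\bX\bP + \mN_{{\rm St}(d,K)}(\bQ)$ are exactly the conditions, in view of~\eqref{eq:h-subdiff}, for $(\b0,\b0) \in \partial h(\bP,\bQ)$, so $(\bP,\bQ)$ is a limiting critical point of $h$.

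Conversely, if $(\bP,\bQ)$ is a limiting critical point of $h$, then plugging $\bQ' = \bQ$ into the formula above makes both the second block reduce to $-\bX\bP + \mN_{{\rm St}(d,K)}(\bQ)$ and the third block reduce to $\{\b0\}$, so $(\b0,\b0,\b0) \in \partial \Psi_\beta(\bP,\bQ,\bQ)$, i.e. $(\bP,\bQ,\bQ)$ is a limiting critical point of $\Psi_\beta$.

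The only delicate step is justifying the displayed formula for $\partial \Psi_\beta$; this is a routine application of the calculus rules in~\cite[Chapter 10B]{RW04} (separability of the indicators across $\bP$ and $\bQ$, and the sum rule with the $C^1$ functions $-\langle \bP,\bX^T\bQ\rangle$ and $(\beta/2)\|\bQ-\bQ'\|_F^2$), and is essentially the same computation already performed in the proof of Theorem~\ref{thm:KL-L1-PCA} and in part~(c) of Proposition~\ref{lem:algo}. Once the formula is available, the lemma follows by inspection of its three components.
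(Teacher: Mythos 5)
Your proposal is correct and follows essentially the same route as the paper: recall the explicit product formula for $\partial\Psi_{\beta}$ (the paper obtains it by adapting \cite[Proposition 2.1]{attouch2010proximal}, just as in part (c) of Proposition~\ref{lem:algo}, while you justify it via the separable-plus-smooth calculus rules of \cite[Chapter 10B]{RW04}), then read off $\bQ=\bQ'$ from the third block and the equivalence with $(\b0,\b0)\in\partial h(\bP,\bQ)$ from the first two blocks via~\eqref{eq:h-subdiff}. No gaps.
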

\begin{proof}
Recall that
\begin{align*}
\partial \Psi_{\beta}(\bP,\bQ,\bQ') &= \{ -\bX^T\bQ + \mN_{\mB(n,K)}(\bP) \} \times \{ -\bX\bP + \beta ( \bQ - \bQ' ) + \mN_{{\rm St}(d,K)}(\bQ) \} \\
&\quad \times \{ \beta ( \bQ' - \bQ )\}.
\end{align*}
Thus, if $\beta>0$ and $(\b0,\b0,\b0) \in \partial \Psi_{\beta}(\bP,\bQ,\bQ')$, then we must have $\bQ=\bQ'$. Moreover, we have $(\b0,\b0,\b0) \in \partial \Psi_{\beta}(\bP,\bQ,\bQ)$ if and only if
\[ (\b0,\b0) \in \{ -\bX^T\bQ + \mN_{\mB(n,K)}(\bP) \} \times \{ -\bX\bP  + \mN_{{\rm St}(d,K)}(\bQ) \}. \]
By~\eqref{eq:h-subdiff}, the latter condition holds if and only if $(\b0,\b0) \in \partial h(\bP,\bQ)$.
\end{proof}
The second is motivated by the update of the block variable $\bP$ in Algorithm \ref{alg:PAMe} and shows that a limit point of the sequence $\{\bP^k\}_{k\ge0}$ satisfies certain fixed-point inclusion.
\begin{lemma}\label{lem:rela-PCA}
Let $\alpha>0$ be given. Suppose that the sequences $\{\bP^k\}_{k\ge0}$ and $\{\bY^k\}_{k\ge0}$ satisfy
\begin{align*}
& \bP^k \in \mB(n,K), \quad \bY^k \in \R^{n \times K}, \quad \bP^{k+1} \in \sign(\bP^k + \bY^k/\alpha) \quad \mbox{for } k=0,1,\ldots, \\
& \bP^k \rightarrow \bP^*, \quad \bY^k \rightarrow \bY^*.
\end{align*}
Then, we have $\bP^* \in \sign(\bP^* + \bY^*/\alpha)$. Moreover, for any $\bY \in \R^{n \times K}$ satisfying $\bP^* \in \sign(\bY)$, we have $\bP^* \in \sign(\bP^* + \bY/\alpha)$.
\end{lemma}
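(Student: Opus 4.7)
The plan is to exploit the discreteness of $\mB(n,K)$ and reason entry-wise, since the only obstacle is the set-valued discontinuity of $\sign$ at the origin.

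For the first claim, I would first observe that $\mB(n,K)$ is a finite set, so the convergence $\bP^k \to \bP^*$ forces the sequence $\{\bP^k\}$ to be eventually constant: there exists $k_0$ with $\bP^k = \bP^*$ for all $k \ge k_0$. Substituting this into the recursion yields $\bP^* \in \sign(\bP^* + \bY^k/\alpha)$ for all $k \ge k_0$. I would then pass to the limit $k \to \infty$ coordinate by coordinate. Fix $(i,j)$ and split into two cases. If $P^*_{ij} + Y^*_{ij}/\alpha \ne 0$, then for $k$ sufficiently large $P^*_{ij} + Y^k_{ij}/\alpha$ has the same (nonzero) sign as its limit, so the single-valued branch of $\sign$ applies and continuity gives $P^*_{ij} = (P^*_{ij} + Y^*_{ij}/\alpha)/|P^*_{ij} + Y^*_{ij}/\alpha|$, i.e., $P^*_{ij} \in \sign(P^*_{ij} + Y^*_{ij}/\alpha)$. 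If instead $P^*_{ij} + Y^*_{ij}/\alpha = 0$, the conclusion is automatic because $\sign(0) = \{-1,1\}$ and $P^*_{ij} \in \{\pm 1\}$.

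For the second claim, I would again argue entry-wise. Fix $(i,j)$. If $Y_{ij} \ne 0$, the hypothesis $\bP^* \in \sign(\bY)$ forces $P^*_{ij} = Y_{ij}/|Y_{ij}|$, so $Y_{ij}/\alpha$ has the same sign as $P^*_{ij}$ (using $\alpha > 0$). Hence $P^*_{ij} + Y_{ij}/\alpha$ is a sum of two terms with the same sign, which is nonzero and of sign $P^*_{ij}$, giving $P^*_{ij} \in \sign(P^*_{ij} + Y_{ij}/\alpha)$. If $Y_{ij} = 0$, then $P^*_{ij} + Y_{ij}/\alpha = P^*_{ij} \in \{\pm 1\}$ and $\sign(P^*_{ij}) = \{P^*_{ij}\}$, so again the membership holds.

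The only real subtlety is the coordinate at which $P^*_{ij} + Y^*_{ij}/\alpha = 0$ in the first part: one might worry that taking limits in a set-valued inclusion $\bP^* \in \sign(\cdot)$ fails because $\sign$ is outer semicontinuous but not continuous. The fact that $\sign(0)$ is deliberately defined to be the full set $\{-1,1\}$ is precisely what saves the argument, making the limit case trivial once the other case is handled by ordinary continuity. No additional machinery (subdifferential calculus, outer semicontinuity lemmas, etc.) is required beyond the eventual-constancy observation and the two-case entrywise split.
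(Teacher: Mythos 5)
Your proof is correct and follows essentially the same route as the paper: an entrywise case split on whether $P^*_{ij} + Y^*_{ij}/\alpha$ vanishes, using that $\sign(0)=\{-1,1\}$ in the degenerate case and single-valuedness of $\sign$ near a nonzero limit otherwise, with the same two-case argument for the second claim. Making the eventual constancy of $\{\bP^k\}$ explicit via finiteness of $\mB(n,K)$ is only a cosmetic difference from the paper's phrasing, which implicitly uses the same discreteness when passing from $\bP^{k+1}$ to $\bP^*$.
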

\begin{proof}
Let $i \in \{1,\ldots,n\}$ and $j\in\{1,\ldots,K\}$ be arbitrary. If $\bP_{ij}^* + \bY_{ij}^*/\alpha = 0$, then $\sign(\bP_{ij}^* + \bY_{ij}^*/\alpha) = \{-1,1\}$ by definition. Since $\bP^* \in \mB(n,K)$, we have $\bP_{ij}^* \in \sign(\bP_{ij}^* + \bY_{ij}^*/\alpha)$. On the other hand, if $\bP_{ij}^* + \bY_{ij}^*/\alpha \not= 0$, then the assumption that $\bP^k \rightarrow \bP^*$, $\bY^k \rightarrow \bY^*$ implies $\sign(\bP_{ij}^k + \bY_{ij}^k/\alpha) = \sign(\bP_{ij}^* + \bY_{ij}^*/\alpha)$ for all sufficiently large $k\ge0$. As $\bP^{k+1} \in \sign(\bP^k + \bY^k/\alpha)$ for $k\ge0$, we conclude that $\bP_{ij}^* = \sign(\bP_{ij}^* + \bY_{ij}^*/\alpha)$. This establishes the first claim.

Now, let $\bY \in \R^{n \times K}$ be such that $\bP^* \in \sign(\bY)$. If $\bY_{ij}=0$, then $\bP_{ij}^* = \sign(\bP_{ij}^*)$ trivially. On the other hand, if $\bY_{ij} \not= 0$, then $\bP_{ij}^* = \sign(\bY_{ij}/\alpha) = \sign(\bP^*_{ij} + \bY_{ij}/\alpha)$. This establishes the second claim.
\end{proof}
We are now ready to establish the main convergence result for our proposed method PAMe.
\begin{proof}[Proof of Theorem \ref{thm:liner-conv}]
Recall from Theorem~\ref{thm:KL-L1-PCA} that the objective function $h$ of Problem~\eqref{eq:L1-PCA-2b} has a K\L\ exponent of $1/2$ at any of its limiting critical points. Hence, by~\cite[Theorem 3.6]{li2017calculus} and Lemma~\ref{lem:rela-H-Psi}, for any $\beta>0$, the potential function $\Psi_{\beta}$ has a K\L\ exponent of $1/2$ at any of its limiting critical points. It then follows from~\cite[Lemma 2.1]{li2017calculus} that $\Psi_{\beta}$ has a K\L\ exponent of $1/2$ at any $(\bP,\bQ,\bQ') \in \dom(\partial\Psi_{\beta})$. This, together with the results in Proposition~\ref{lem:algo}, allows us to invoke~\cite[Theorem 2.9]{attouch2013convergence} to conclude that under the setting of Theorem~\ref{thm:liner-conv}, the sequence $\{\bC^k\}_{k\ge 0}$ converges to a limiting critical point $(\bP^*,\bQ^*,\bQ^*)$ of the potential function $\Psi_{\beta_*}$. Moreover, by~\cite[Theorem 3.4]{attouch2010proximal}, the rate of convergence is at least linear. It follows from Lemma~\ref{lem:rela-H-Psi} that the sequence $\{ (\bP^k,\bQ^k) \}_{k\ge0}$ converges at least linearly to the limiting critical point $(\bP^*,\bQ^*)$ of Problem~\eqref{eq:L1-PCA-2b}.

Now, suppose that $\alpha_k=\alpha_*$ for $k\ge0$ in Algorithm \ref{alg:PAMe}. According to the update~\eqref{closed-form-P}, the sequence $\{(\bP^k,\bQ^k)\}_{k\ge0}$ satisfies $\bP^{k+1} \in \sign(\bP^k + \bX^T\bE^k/\alpha_*)$, where $\bE^k = \bQ^k + \gamma_k(\bQ^k - \bQ^{k-1})$. Since $(\bP^k,\bQ^k) \rightarrow (\bP^*,\bQ^*)$ and $(\bP^*,\bQ^*)$ is a limiting critical point of $h$, we have $\b0 \in -\bX\sign(\bP^* + \bX^T\bQ^*/\alpha_*) + \mN_{{\rm St}(d,K)}(\bQ^*)$ by~\eqref{eq:h-subdiff} and the result in Lemma~\ref{lem:rela-PCA}; i.e., $\bQ^*$ is a solution to the generalized equation~\eqref{eq:weak-fpt-incl}. In particular, noting that $\bP^* \in \mB(n,K)$, if $\alpha_*$ satisfies~\eqref{eq:alpha-range}, then $\sign(\bP^* + \bX^T\bQ^*/\alpha_*) \subseteq \sign(\bX^T\bQ^*)$. Consequently, we obtain $\b0 \in -\bX\sign(\bX^T\bQ^*) + \mN_{{\rm St}(d,K)}(\bQ^*)$, which, in view of~\eqref{eq:weak-opt}, shows that $\bQ^*$ is a critical point of Problem~\eqref{L1-PCA-1}. Conversely, let $\bar{\bQ}$ be a critical point of Problem~\eqref{L1-PCA-1} that satisfies $\b0 \in -\bX\bP^* + \mN_{{\rm St}(d,K)}(\bar{\bQ})$ and $\bP^* \in \sign(\bX^T\bar{\bQ})$. By Lemma~\ref{lem:rela-PCA}, we have $\bP^* \in \sign(\bP^* + \bX^T\bar{\bQ}/\alpha_*)$. It then follows that $\bar{\bQ}$ is a solution to the generalized equation~\eqref{eq:weak-fpt-incl}.
\end{proof}

\section{Numerical Results} \label{sec:num}

In this section, we report the numerical performance of different L1-PCA algorithms---including our proposed method PAMe, the standard PAM method (see~\eqref{eq:pam-P} and \eqref{update-Q}), the method based on FP iterations (FPM) in~\cite{nie2011robust}, the method pDCAe in~\cite{wen2018proximal}, the inertial proximal alternating linearized minimization (iPALM) method in~\cite{pock2016inertial}, and the Gauss-Seidel-type iPALM (GiPALM) method in~\cite{gao2019gauss}---on both synthetic and real-world datasets. We remark that pDCAe is applied to Problem~\eqref{L1-PCA-1} only in a formal manner, as the objective function $\ell$ is not of the difference-of-convex type. We do not include the inertial proximal block coordinate descent-type algorithm in~\cite{hien2020inertial} in our experiments, as it has essentially the same updates as those of iPALM when applied to Problem~\eqref{eq:L1-PCA-2b}. We also do not include the exact algorithm in~\cite{markopoulos2014optimal} or the algorithm based on BF iterations in~\cite{markopoulos2017efficient} in our experiments, as the datasets we used are too large for them to tackle. All the numerical experiments were conducted on a PC running Windows 10 with an Intel\textsuperscript{\textregistered} Core\texttrademark\, i5-8600 3.10GHz CPU and 16GB memory. Our code runs in MATLAB R2020a and is available at \url{https://github.com/peng8wang/L1-PCA-PAMe}. 

\subsection{Convergence Performance and Solution Quality}\label{sec:num-1}

We begin by studying the convergence performance and solution quality of the different algorithms when applied to both synthetic and real-world instances of the L1-PCA problem. The data matrix $\bX \in \R^{d \times n}$ in a synthetic instance of the L1-PCA problem is generated according to the \emph{fixed effect model} in~\cite{baccini1996l1}. Specifically, for $i=1,\ldots,n$, the $i$-th column of $\bX$ is given by $\bx_i=\bm{z}_i+\bm{e}_i$, where $\bm{z}_i \in \R^d$ is called a \emph{fixed effect} and $\bm{e}_i \in \R^d$ is a \emph{random noise}. The model assumes that the fixed effects $\bz_1,\ldots,\bz_n$ lie on a $K$-dimensional subspace and satisfy $\sum_{i=1}^n\bz_i=\bm{0}$, and that the noise vectors $\bm{e}_1,\ldots,\bm{e}_n$ have entries that are independent and identically distributed (i.i.d.)~according to the Laplace distribution with mean $0$ and variance $\sigma^2$. In our experiments, we generate the fixed effects $\bm{z}_1,\ldots,\bm{z}_n$ in two steps. First, we generate a basis $\bU \in {\rm St}(d,K)$ of the target $K$-dimensional subspace by $\bU = \bm{Y}(\bm{Y}^T\bm{Y})^{-1/2}$, where the entries of $\bm{Y} \in \R^{d\times K}$ are i.i.d.~according to the standard normal distribution. Then, we set $\bz_i=\bm{U}(\bm{a}_i - \bar{\bm a})$ for $i=1,\dots,n$, where each entry of $\bm{a}_i \in \R^K$ is i.i.d.~according to the standard uniform distribution and $\bar{\bm a} = \tfrac{1}{n}\sum_{i=1}^n \bm{a}_i$. With the above setup, we set $\sigma=0.5$, $K=50$ and generate two synthetic instances whose data matrices have dimensions $(n,d)=(4000,2000)$ and $(n,d)=(2000,4000)$, respectively. For the real-world instance, we set $K=20$ and extract a data matrix of dimensions $(n,d)=(15935,62061)$ from the dataset \emph{news20} in LIBSVM \cite{chang2011libsvm}.\footnote{\url{https://www.csie.ntu.edu.tw/~cjlin/libsvmtools/datasets/}}

The parameters of the various algorithms are set as follows. To be fair, we employ the same step sizes when updating the block variables $\bP$ and $\bQ$ in all the PA(L)M-type methods. Specifically, for the two synthetic instances, we set $(\alpha_k,\beta_k) = (10^{-5},10^3)$ and $(\alpha_k,\beta_k) = (10^{-5},10^2)$ for $k\ge0$, respectively; for the real-world instance, we set $(\alpha_k,\beta_k) = (10^{-6},20)$ for $k\ge 0$. The step size for updating the block variable $\bQ$ in pDCAe is set as $\beta_k=1$ for $k\ge 0$. There is no need to choose any step size for FPM. Next, we specify the extrapolation parameters in the PA(L)M-type methods. For PAMe, we set the extrapolation parameter as $1$ for $k \ge 0$. Although such a choice may violate the condition in Theorem~\ref{thm:liner-conv}, it works effectively in our experiments. For iPALM, we set the extrapolation parameters when updating the block variables $\bP$ and $\bQ$ both as $\tfrac{k-1}{k+2}$ for $k\ge1$. Such a choice is motivated by the numerical results in \cite{pock2016inertial}. For GiPALM, we set the extrapolation parameters when updating the block variables $\bP$ and $\bQ$ as $1/2$ and $1/4$ for $k\ge0$, respectively. For pDCAe, we set the extrapolation parameter when updating the block variable $\bQ$ using the fixed restart scheme as suggested in \cite{wen2018proximal} with the fixed restart interval $\bar{T}=10$. In each test, we adopt the same starting point for all the algorithms and terminate them when the Frobenius norm of the difference of two consecutive iterates is less than $10^{-8}$. 

\begin{figure*}[h]
\begin{center}
	\begin{minipage}[b]{0.32\linewidth}
		\centering
		\centerline{\includegraphics[width=\linewidth]{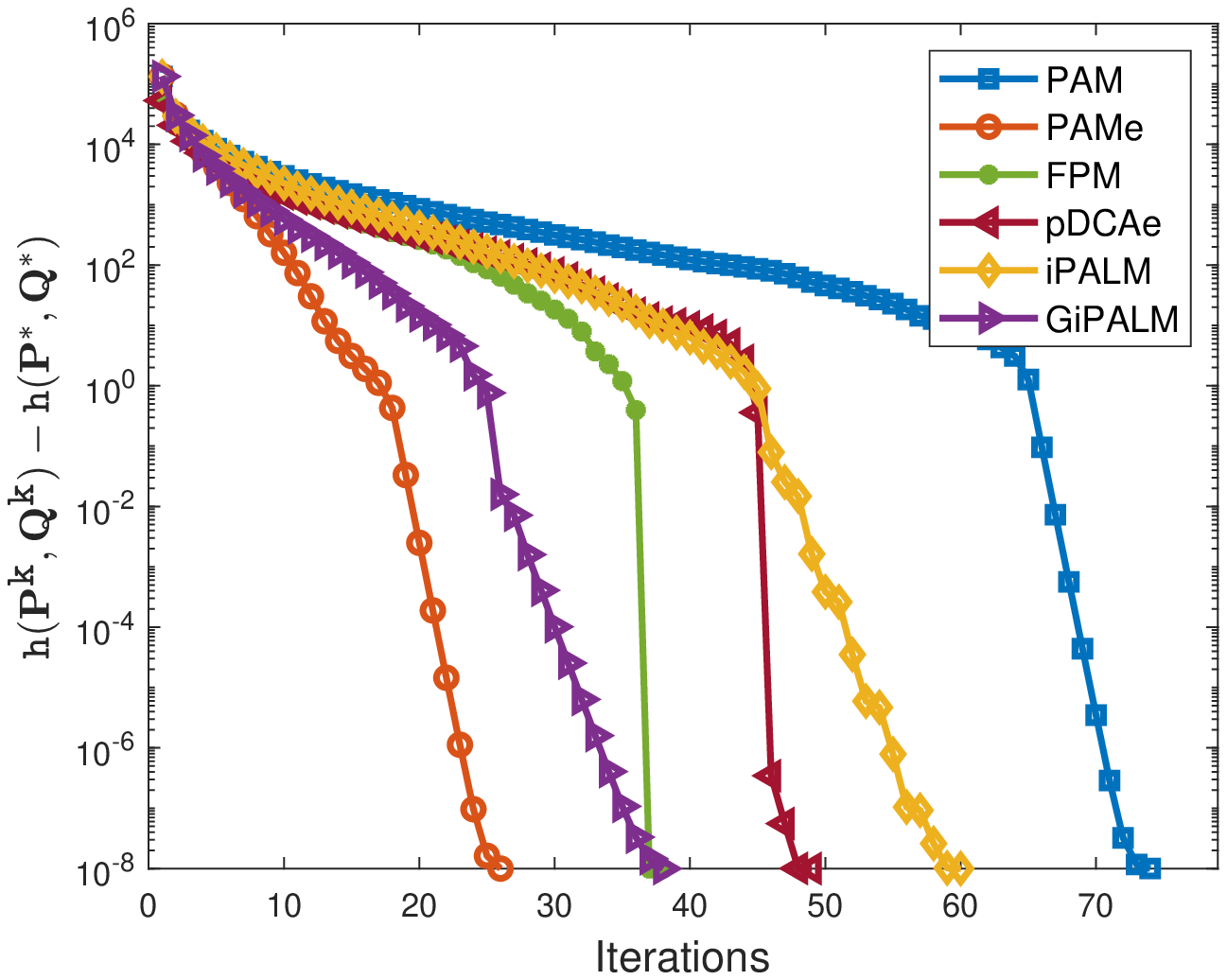}}
		\begin{tabular}{cc}
		\footnotesize (a) & \footnotesize synthetic dataset \\
		\noalign{\vspace{-0.1cm}}
		& \footnotesize $(n,d)=(4000,2000)$
		\end{tabular}
	\end{minipage}
	\begin{minipage}[b]{0.32\linewidth}
		\centering
		\centerline{\includegraphics[width=\linewidth]{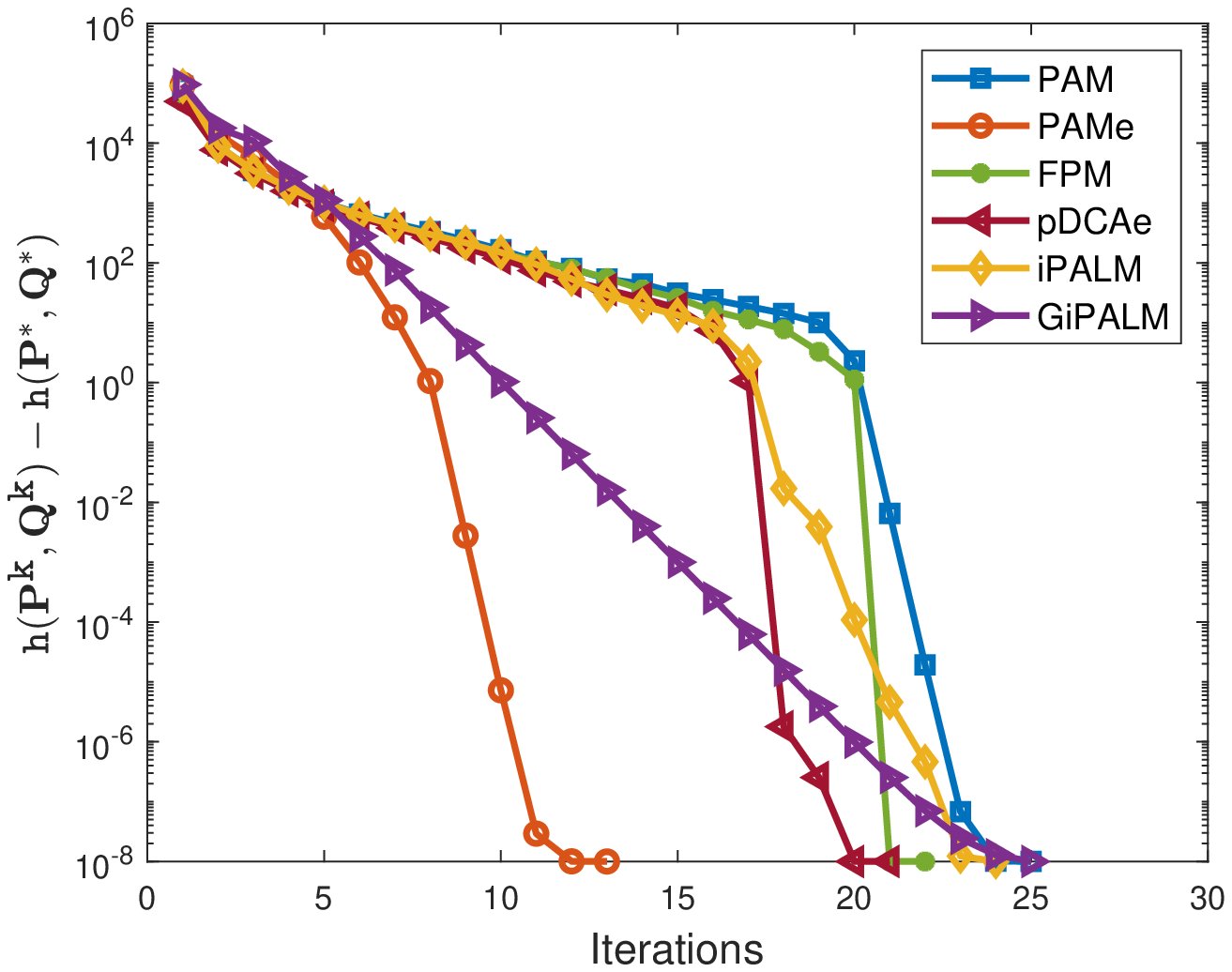}}
		\begin{tabular}{cc}
		\footnotesize (b) & \footnotesize synthetic dataset \\
		\noalign{\vspace{-0.1cm}}
		& \footnotesize $(n,d)=(2000,4000)$
		\end{tabular}
	\end{minipage}
		\begin{minipage}[b]{0.32\linewidth}
		\centering
		\centerline{\includegraphics[width=\linewidth]{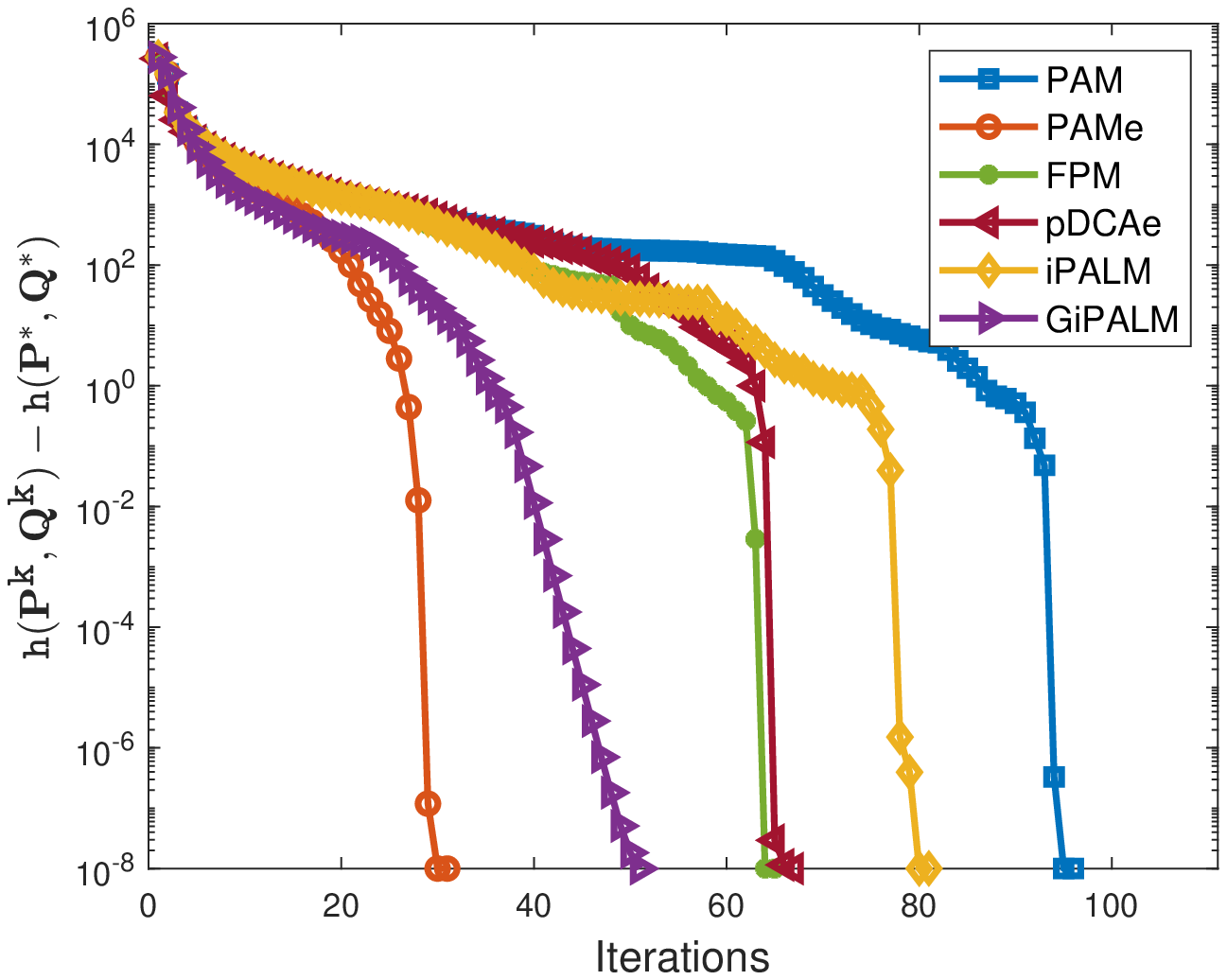}}
		\begin{tabular}{cc}
		\footnotesize (c) & {\footnotesize \emph{news20} dataset} \\
		\noalign{\vspace{-0.1cm}}
		& \footnotesize $(n,d)=(15935,62061)$
		\end{tabular}
	\end{minipage}
\end{center}
	\vskip -0.08in
	\caption{Convergence performance of function values: The $x$-axis is number of iterations; the $y$-axis is function value gap $h(\bP^k,\bQ^k)-h(\bP^*,\bQ^*)$, where $(\bP^*,\bQ^*)$ is the last iterate of the tested method.}
	\label{fig-1}
\end{figure*}

\begin{figure*}[h]
\begin{center}
	\begin{minipage}[b]{0.32\linewidth}
		\centering
		\centerline{\includegraphics[width=\linewidth]{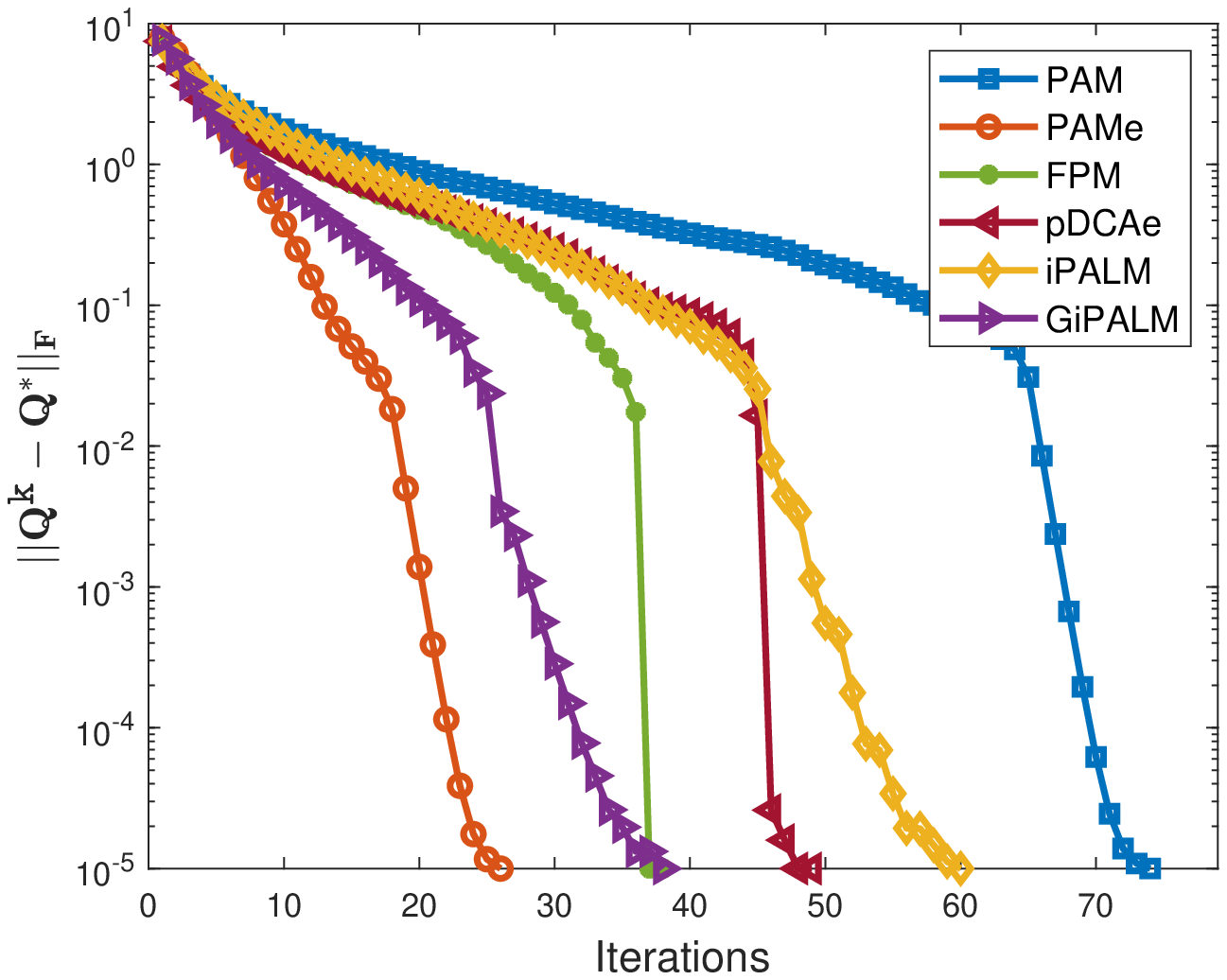}}
		\begin{tabular}{cc}
		\footnotesize (a) & \footnotesize synthetic dataset \\
		\noalign{\vspace{-0.1cm}}
		& \footnotesize $(n,d)=(4000,2000)$
		\end{tabular}
	\end{minipage}
	\begin{minipage}[b]{0.32\linewidth}
		\centering
		\centerline{\includegraphics[width=\linewidth]{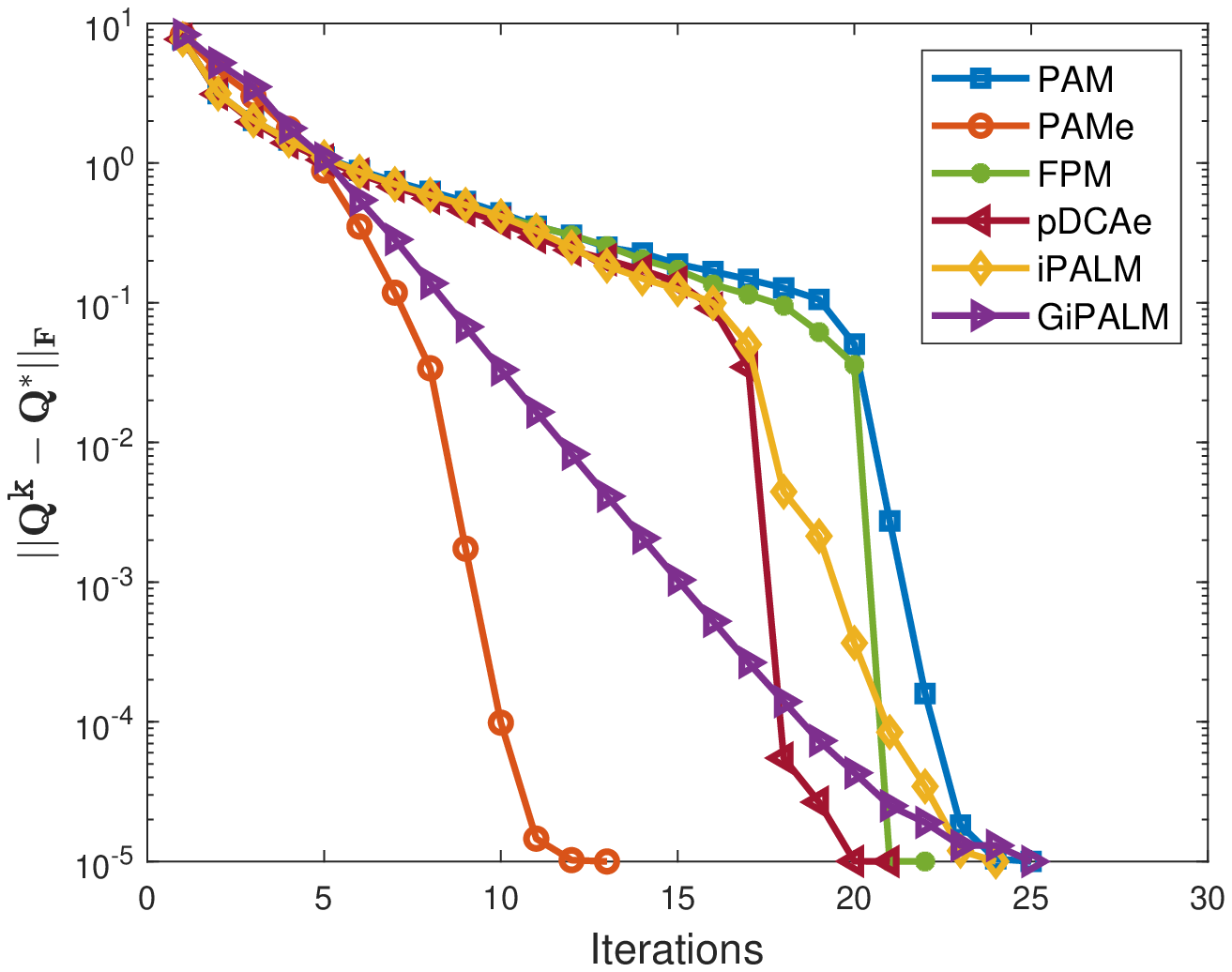}}
		\begin{tabular}{cc}
		\footnotesize (b) & \footnotesize synthetic dataset \\
		\noalign{\vspace{-0.1cm}}
		& \footnotesize $(n,d)=(2000,4000)$
		\end{tabular}
	\end{minipage}
		\begin{minipage}[b]{0.32\linewidth}
		\centering
		\centerline{\includegraphics[width=\linewidth]{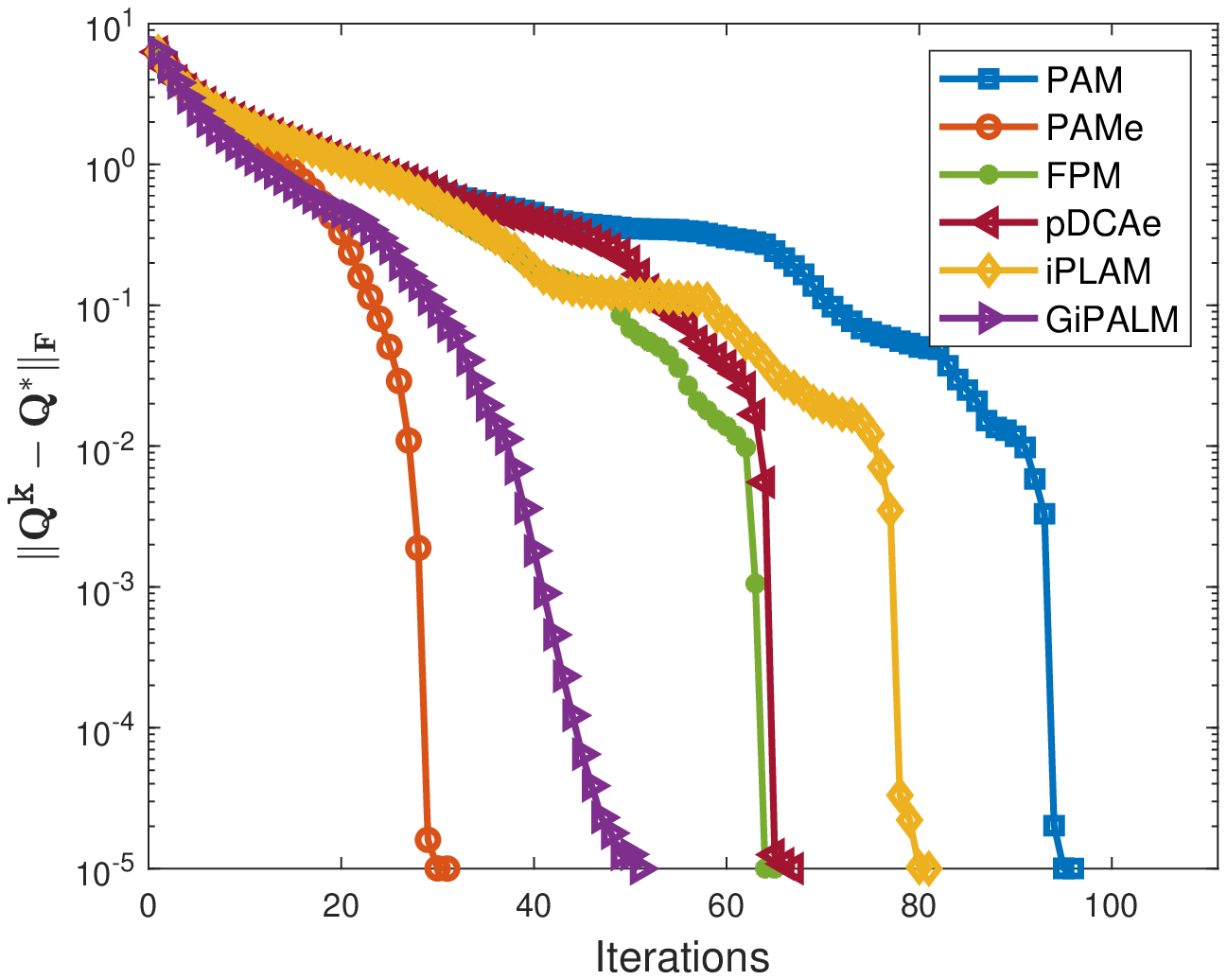}}
		\begin{tabular}{cc}
		\footnotesize (c) & \footnotesize \emph{news20} dataset \\
		\noalign{\vspace{-0.1cm}}
		& \footnotesize $(n,d)=(15935,62061)$
		\end{tabular}
	\end{minipage}
\end{center}
	\vskip -0.08in
	\caption{Convergence performance of iterates: The $x$-axis is number of iterations; the $y$-axis is iterate gap $\|\bQ^k-\bQ^*\|_F$, where $\bQ^*$  is the last iterate of the tested method.}
	\label{fig-2}
\end{figure*}

We plot the function value gap $h(\bP^k,\bQ^k)-h(\bP^*,\bQ^*)$ and the iterate gap $\|\bQ^k-\bQ^*\|_F$ against the iteration number for each tested method in Figures \ref{fig-1} and \eqref{fig-2}, respectively, where $(\bP^*,\bQ^*)$ is the last iterate of the tested method. It can be observed that for all the tested methods, both the sequence of function value gaps and the sequence of iterate gaps converge linearly. In particular, the convergence performance of PAMe supports our linear convergence result in Theorem~\ref{thm:liner-conv}. Moreover, our numerical results demonstrate that PAMe converges substantially faster than the standard PAM method and also faster than FPM, pDCAe, iPALM, and GiPALM.

To compare the quality of the solution returned by each method, we use the \emph{total explained variation} (TEV) measure as in \cite{kim2019simple}, which in our setting is given by
\begin{align*}
{\rm TEV} = \frac{\sum_{i=1}^K \bq_i^T\bX^T\bX \bq_i}{\sum_{i=1}^K \bar{\bq}_i^T\bX^T\bX \bar{\bq}_i}.
\end{align*} 
Here, $\bq_i$ is the $i$-th column of the solution returned by the tested method and $\bar{\bq}_i$ is the $i$-th leading eigenvector of $\bX^T\bX$. Table \ref{table-3} summarizes the TEV of the tested methods. It can be observed that the performance of PAMe is comparable to those of the other methods.  

\begin{table}[!htbp]
\caption{Total explained variation of the tested methods.}
\label{table-3}
\begin{center}
\begin{tabular}{c | c  c  c  c c c c r}
\hline
& \footnotesize PAMe & \footnotesize PAM & \footnotesize FPM & \footnotesize pDCAe & \footnotesize iPALM & \footnotesize GiPALM  \\
\hline
\begin{tabular}{c}
\footnotesize synthetic dataset \\
\noalign{\vspace{-0.1cm}}
\footnotesize $(n,d) = (4000,2000)$
\end{tabular} & \footnotesize {\bf 0.8396} & \footnotesize 0.8214 & \footnotesize 0.8219 & \footnotesize 0.8222 & \footnotesize 0.8227 & \footnotesize 0.8277 \\ 
\begin{tabular}{c}
\footnotesize synthetic dataset \\
\noalign{\vspace{-0.1cm}}
\footnotesize $(n,d) = (2000,4000)$
\end{tabular} & \footnotesize 0.7756 & \footnotesize 0.7210 & \footnotesize 0.7227 & \footnotesize 0.7223 & \footnotesize 0.7212 & \footnotesize {\bf 0.7839} \\
\begin{tabular}{c}
\footnotesize \emph{new20} dataset \\
\noalign{\vspace{-0.1cm}}
\footnotesize $(n,d) = (15935,62061)$
\end{tabular} & \footnotesize {\bf 0.5801} & \footnotesize 0.5741 & \footnotesize 0.5725 & \footnotesize 0.5744 & \footnotesize 0.5720 & \footnotesize 0.5705 \\
\hline
\end{tabular}
\end{center}
\end{table}

\subsection{Application to Clustering on a Subspace} 

As suggested in \cite{ding2006r}, another way of evaluating the performance of an L1-PCA algorithm is to study the clustering accuracy of a dataset on the subspace found by the algorithm. The procedure is as follows. First, we apply the L1-PCA algorithm to the given dataset to compute a subspace. Then, we project the data points onto the subspace and perform $k$-means clustering on the projected points. Finally, we record the fraction of data points that are correctly clustered.
In our experiments, we use the real-world datasets \emph{a9a}, \emph{colon-cancer}, \emph{gisette}, \emph{rcv1.binary}, \emph{real-sim}, and \emph{w8a} in LIBSVM \cite{chang2011libsvm}, whose dimensions can be found in Table \ref{table-1}. In each of these datasets, the data points are given one of two possible labels. These labels serve as the ground truth and naturally divide the data points into two clusters. The dimension $K$ of the subspace used by L1-PCA to capture the variation in the data matrix $\bX \in \R^{d\times n}$ is chosen such that the fraction of total variance explained by the leading $K$ singular values of $\bX$ is not less than $0.8$; i.e., $K$ satisfies $\sum_{k=1}^K\sigma_k^2\ge 0.8 \sum_{k=1}^p \sigma_k^2$, where $p=\min\{n,d\}$ and $\sigma_1 \ge \cdots \ge \sigma_p \ge 0$ are the singular values of $\bX$. If $p$ is so large (say, $p \ge 10000$) that it becomes too expensive to compute all the singular values of $\bX$, we simply set $K=50$.

The step sizes used by the PA(L)M-type methods are listed in Table \ref{table-1}. The step size for updating the block variable $\bQ$ in pDCAe is given by $\beta_k$ in Table \ref{table-1}. We use the same extrapolation parameters for PAMe, pDCAe, iPALM, and GiPALM as those in Subsection \ref{sec:num-1}. 
We terminate the tested methods when either the number of iterations reaches 1000 or the Frobenius norm of the difference of two consecutive iterates is less than $10^{-6}$. To compare the computational efficiency and clustering accuracy of the tested methods, we record their CPU times and ratios of correctly clustered points, averaged over 10 randomly chosen initial points, in Tables \ref{table-2} and \ref{table-4}, respectively. It can be observed that the CPU time consumed by PAMe is generally less than those consumed by the other methods on the tested data sets, especially on \emph{rcv1.binary}, \emph{real-sim}, and \emph{w8a}. Moreover, the clustering accuracy of PAMe is comparable to those of the other methods. These demonstrate the efficiency and efficacy of PAMe when performing clustering on a subspace. 

\begin{table}[!htbp]
\caption{Dimensions of dataset, dimension of subspace $K$, and step size parameters $\alpha_k,\beta_k$.}
\label{table-1}
\begin{center}
\begin{small}
\begin{tabular}{l | l | c | c| c cr}
\hline
& $(n,d)$ & $K$ & $\alpha_k$ & $\beta_k$  \\
\hline
\emph{a9a} & (32561,\ 123) & 6 & $10^{-8}$ & $0.1$  \\ 
\emph{colon-cancer} & (62,\ 2000) & 9 & $10^{-6}$ & 1 \\
\emph{gisette} & (6000,\ 5000) &  1  & $10^{-6}$ & 1 \\
\emph{rcv1.binary} & (20242,\ 47236) &  50 & $10^{-10}$ & 10\\ 
\emph{real-sim} & (72309,\ 20958) &  50  & $10^{-10}$  & 1 \\ 
\emph{w8a} & (49749,\ 300) & 39  & $10^{-10}$ & $1$  \\  
\hline
\end{tabular}
\end{small}
\end{center}
\end{table}

\begin{table}[!htbp]
\caption{CPU time (in seconds) of the tested methods.}
\label{table-2}
\begin{center}
\begin{small}
\begin{tabular}{l | cccc cccc} 
\hline
 & PAMe & PAM &  FPM & pDCAe & iPALM & GiPALM \\                                      
\hline
\emph{a9a} & {\bf 0.12} & 0.19 & 0.22 & 0.17 & 0.21 & 0.14     \\ 
\emph{colon-cancer} & 0.02 & 0.02 & {\bf 0.01} & 0.02 & 0.05 & 0.04 \\
\emph{gisette} & {\bf 0.18} & {\bf 0.18} & 0.19 & 0.19 & {\bf 0.18} &  0.72 \\
\emph{rcv1.binary} & {\bf 7.24} & 24.68 & 27.84 & 25.12 & 29.34 & 16.16 \\
\emph{real-sim} & {\bf 14.95} & 104.4 & 113.2 & 78.02 & 92.93 & 51.25 \\ 
\emph{w8a}  & {\bf 4.43} & 19.38 & 17.42 & 17.10 & 17.77 & 11.88   \\ 
\hline
\end{tabular}
\end{small}
\end{center}
\end{table}

\begin{table}[!htbp]
\caption{Clustering accuracy (i.e., fraction of correctly clustered data points) of the tested methods.}
\label{table-4}
\begin{center}
\begin{small}
\begin{tabular}{l | cccc cccc} 
\hline
 & PAMe & PAM &  FPM & pDCAe & iPALM & GiPALM \\                                      
\hline
\emph{a9a}  & 0.7108 & {\bf 0.7124} & {\bf 0.7124}  & {\bf 0.7124} & {\bf 0.7124} & 0.7104  \\ 
\emph{colon-cancer} & {\bf 0.5532} & 0.5354 & 0.5371 & 0.5419 & 0.5403 & {\bf 0.5532} \\
\emph{gisette} & {\bf 0.5705} & {\bf 0.5705} & {\bf 0.5705} & {\bf 0.5705} & {\bf 0.5705} & {\bf 0.5705} \\
\emph{rcv1.binary} & 0.5885 & 0.5885 & 0.5862 & 0.5884 & {\bf 0.5886} & 0.5883 \\
\emph{real-sim} & {\bf  0.5808} & 0.5807 &  0.5807 & 0.5807 & 0.5807 & 0.5807 \\ 
\emph{w8a}  &  0.7272 & 0.7271 & 0.7271 & {\bf 0.7274} & 0.7273 & 0.7273 \\ 
\hline
\end{tabular}
\end{small}
\end{center}
\end{table}

\section{Concluding Remarks}\label{sec:clud}

In this paper, we proposed a fast iterative method called PAMe to tackle the two-block reformulation~\eqref{L1-PCA-Re} of the L1-PCA problem~\eqref{L1-PCA}. We proved that the sequence of iterates generated by PAMe converges linearly to a limiting critical point of Problem~\eqref{L1-PCA-Re} and gave a sufficient condition under which the said limiting critical point yields a critical point of the original problem~\eqref{L1-PCA}. We also demonstrated the efficiency and efficacy of PAMe via numerical experiments on both synthetic and real-world datasets. As a key step in establishing the linear convergence of PAMe, we showed that the K\L\ exponent at any limiting critical point of Problems~\eqref{L1-PCA} and \eqref{L1-PCA-Re} is $1/2$. This result not only is significant in its own right but also opens the possibility of establishing strong theoretical guarantees on the convergence behavior of other iterative methods (see, e.g.,~\cite{dhanaraj2018novel}) for solving \eqref{L1-PCA} or \eqref{L1-PCA-Re}. Another possible future direction is to consider the design and analysis of fast iterative methods for other $\ell_1$-norm-based variants of PCA (see, e.g.,~\cite{lerman2018overview,tsagkarakis2018l1}).

\bibliographystyle{abbrvnat}

\appendix
\begin{appendices}

\section{Proof of Lemma \ref{prop:bound-R(Q)}} \label{sec:appen-A}
Following the derivation in~\eqref{eq-4:prop-bound-R(Q)} and using the fact that $\|\bQ_1\| \le \|\bQ\| \le 1$, we have 
\begin{align}
\|R(\bQ)\|_F^2 &\ge \|\tbA\|_F^2 - 2 \langle \tbA, \bQ_1\tbA^T\bQ_1 \rangle + \|\tbA^T\bQ_1\|_F^2 \nonumber \\
&\ge 2 \left( \| \tbA^T\bQ_1 \|_F^2 - \langle \bQ_1^T\tbA, \tbA^T\bQ_1 \rangle \right) \nonumber \\
&= \|\tbA^T\bQ_1 - \bQ_1^T\tbA\|_F^2. \label{eq-6:prop-bound-R(Q)}
\end{align}
Now, the block structures of $\tbA^T\bQ_1$ and $\bQ_1^T\tbA$ in~\eqref{eq:sym-diff} and the ordering of the singular values of $\bA$ in~\eqref{rela-a} imply that
\[ 
\|\tbA^T\bQ_1 - \bQ_1^T\tbA\|_F^2 \ge \sum_{i=1}^p \|a_{s_i}(\bQ_{h_ih_i}-\bQ_{h_ih_i}^T)\|_F^2 \ge a_{s_p}^2 \sum_{i=1}^p \left \|\bQ_{h_i h_i} - \bQ_{h_i h_i}^T \right \|_F^2.
\]
This, together with~\eqref{eq-6:prop-bound-R(Q)}, yields~\eqref{bound:diag-blk}.

Using~\eqref{rela-a} and~\eqref{eq:sym-diff} again, we have
\begin{align}\label{eq-8:prop-bound-R(Q)}
\|\tbA^T\bQ_1 - \bQ_1^T\tbA\|_F^2 & \ge \sum_{i=1}^p \sum_{j\neq i} \|a_{s_i}\bQ_{h_ih_j} - a_{s_j}\bQ_{h_jh_i}^T\|_F^2 \notag \\
& \ge a_{s_p}^2 \sum_{i=1}^p \sum_{j\neq i} \left \|\frac{a_{s_i}}{a_{s_j}}\bQ_{h_ih_j} - \bQ_{h_jh_i}^T \right \|_F^2.
\end{align}
In a similar fashion, we get
\begin{align}\label{eq-12:prop-bound-R(Q)}
\|\tbA\bQ_1^T - \bQ_1\tbA^T\|_F^2 \ge a_{s_p}^2 \sum_{i=1}^p \sum_{j\neq i} \left \|\frac{a_{s_j}}{a_{s_i}}\bQ_{h_ih_j} - \bQ_{h_jh_i}^T \right \|_F^2.
\end{align}
Recalling that $\delta_{ij} = \tfrac{a_{s_i}}{a_{s_j}}-\tfrac{a_{s_j}}{a_{s_i}}$ for $i,j\in\{1,\ldots,p\}$; $i\not=j$ and using~\eqref{eq-8:prop-bound-R(Q)} and~\eqref{eq-12:prop-bound-R(Q)}, we bound
\begin{align}
& \left( \min_{i,j \in \{1,\ldots,p\} \atop i\not=j} \delta_{ij}^2 \right) \sum_{i=1}^p \sum_{j\neq i} \|\bQ_{h_ih_j}\|_F^2 \le \sum_{i=1}^p \sum_{j\neq i} \left\|\delta_{ij}\bQ_{h_ih_j} \right\|_F^2 \nonumber \\
\le&\ 2\sum_{i=1}^p \sum_{j\neq i} \left( \left\|\frac{a_{s_i}}{a_{s_j}}\bQ_{h_ih_j} - \bQ_{h_jh_i}^T\right\|_F^2  +  \left\|\frac{a_{s_j}}{a_{s_i}}\bQ_{h_ih_j} - \bQ_{h_jh_i}^T\right\|_F^2 \right) \nonumber \\
\le&\ \frac{2}{a_{s_p}^2}\left( \|\tbA^T\bQ_1 - \bQ_1^T\tbA\|_F^2  + \|\tbA\bQ_1^T - \bQ_1\tbA^T\|_F^2 \right). \label{eq:min-bd}
\end{align}
Similar to the derivation of~\eqref{eq-6:prop-bound-R(Q)}, we have
\begin{align}\label{eq-11:prop-bound-R(Q)}
\|\tbA\bQ_1^T - \bQ_1\tbA^T\|_F^2  \le 2 \left( \|\tbA\|_F^2 - \langle \tbA\bQ_1^T, \bQ_1\tbA^T \rangle \right) \le 2\|R(\bQ)\|_F^2,
\end{align}
where the second inequality follows from \eqref{eq-4:prop-bound-R(Q)} and the fact that $\langle \tbA\bQ_1^T, \bQ_1\tbA^T \rangle = \langle \bQ_1^T\tbA, \tbA^T\bQ_1 \rangle$. Putting~\eqref{eq-6:prop-bound-R(Q)}, \eqref{eq:min-bd}, and~\eqref{eq-11:prop-bound-R(Q)} together, we obtain~\eqref{bound:off-diag-blk}.
\end{appendices}

\end{document}